\documentclass[11pt]{article}
\usepackage{amsmath,amsfonts}
\usepackage{txfonts,wasysym,lscape,amssymb,mathrsfs,extarrows}
\usepackage[all,pdf]{xy}
\usepackage[colorlinks,linkcolor=black,anchorcolor=blue,citecolor=green]{hyperref}
\usepackage[titletoc,title]{appendix}
\usepackage[algo2e,vlined,ruled]{algorithm2e}

\usepackage{algorithm}
\usepackage{algorithmic}

 \usepackage{threeparttable}
\usepackage{fancyhdr}

\usepackage{amsfonts}
\usepackage{setspace,url}
\usepackage{graphics,graphicx,epstopdf}
\usepackage{tabularx}
\usepackage{longtable}
\usepackage{booktabs,multirow,array,multicol}
\newcolumntype{C}[1]{>{\centering\arraybackslash}m{#1}}
\usepackage{enumitem}
\usepackage{subfig}
\ifpdf
  \DeclareGraphicsExtensions{.eps,.pdf,.png,.jpg}
\else
  \DeclareGraphicsExtensions{.eps}
\fi

\newtheorem{theorem}{Theorem}[section]
\newtheorem{proposition}{Proposition}[section]
\newtheorem{lemma}{Lemma}[section]
\newtheorem{corollary}{Corollary}[section]
\newtheorem{definition}{Definition}[section]

\newtheorem{remark}{Remark}[section]

\newcommand{\R}{{\mathbb R}}

\newcommand{\be}{\begin{equation}}
\newcommand{\ee}{\end{equation}}

\title{ A Proximal Augmented Lagrangian Method Based on Quadratic Approximations for Weakly Convex Optimization\thanks{Supported by National Key R\&D Program of China under project number 2022YFA1004000, the Major Program of National Natural Science Foundation of China (72192830, 72192831),
 National Natural Science Foundation of China (No. 12371298) and the 111 Project (B16009 ).}}

\author{
Yule Zhang\footnote{
School of   Science, Dalian Maritime University,
Dalian 116085, P. R. China. (ylzhang@dlmu.edu.cn)}, \,\,
Benqi Liu\footnote{Beijing International Center for Mathematical Research, Peking University,
 Beijing 100871, P. R. China. (bqliu @pku.edu.cn)},\,\,
Xiantao Xiao\footnote{Institute of Operations Research and Control Theory, School of Mathematical Sciences, Dalian University
of Technology, Dalian 116024, P. R. China.(xtxiao@dlut.edu.cn)}\,\, and Liwei Zhang\footnote{National Frontiers Science Center for Industrial Intelligence and Systems Optimization, Northeastern University, Shenyang 110819, China;  Key Laboratory of Data Analytics and Optimization for Smart Industry  (Northeastern University),  Ministry of Education, Shenyang 110819, P. R. China. (zhanglw@mail.neu.edu.cn)}}

\date{}

\begin{document}

\maketitle
 \vspace{2mm}
\begin{center}
\parbox{13.5cm}{\small
\textbf{Abstract.}  This paper proposes QPALM, a proximal augmented Lagrangian method based on quadratic approximations, for solving nonlinear programming problems with weakly convex objective and constraint functions. The algorithm is constructed by incorporating quadratic approximations of both the objective and constraint functions into a proximal Lagrangian framework. We establish its non-asymptotic convergence rate in terms of the total number of subproblems solved. The convergence of QPALM is characterized by three metrics associated with the $\varepsilon$-KKT conditions: the squared norm of the gradient of the Moreau envelope of the Lagrangian, the average constraint violation, and the average complementarity violation. All three metrics are shown to converge at a rate of $O(T^{-1/3})$ after $T$ iterations. Preliminary numerical results demonstrate the practical efficiency of the proposed method. These results are established under two mild conditions: (i) weak convexity of all problem functions, and (ii) the existence of a strictly feasible point. The proposed QPALM is a sequentially strongly convex programming method that is readily implementable. \\[10pt]
\textbf{Key words.}  proximal  Lagrangian method, quadratic approximations, $\varepsilon$-KKT conditions, constraint violation, complementarity violation, iteration complexity. \\
\textbf{AMS Subject Classifications(2000):} 90C30. }
\end{center}

\section{Introduction}
\setcounter{equation}{0}
Augmented Lagrangian methods form an important class of approaches for solving nonlinear optimization problems, tracing back to the work of Hestenes (1969) \cite{Hestenes1969} and Powell (1969) \cite{P69}. These methods fall under the framework of sequential unconstrained minimization techniques, where each iteration requires solving an unconstrained minimization problem either exactly or approximately. To address this limitation, many studies have focused on constructing new subproblems based on the augmented Lagrangian function that are easier to solve-such as the linearized augmented Lagrangian method proposed by Wang and Yuan (2015) \cite{Wang2015a} and Wang and Zhang (2015) \cite{Wang2015b}. While the local convergence rate of the classical augmented Lagrangian method is well understood (see, e.g., Bertsekas (1982) \cite{Bertsekas1982}), research on augmented Lagrangian methods with global convergence remains less satisfactory.

Over the past two decades, with the emergence of numerous algorithms in machine learning, researchers have begun evaluating algorithm performance from a different perspective-namely, iteration complexity-which examines the approximation of optimality achieved after $N$ steps. A typical example is the worst-case complexity analysis of an SQP method for nonlinear equality constrained stochastic optimization in \cite{Curtis2024}. A number of studies on augmented Lagrangian methods have emerged in this direction, including \cite{GrapigliaYuan2021}, \cite{XieWright2021}, \cite{HongHajinezhadZhao2017}, \cite{HajinezhadHong2019}, and \cite{Xuyy2025}. A detailed summary of the complexity results-along with the findings of this paper-is provided in Table \ref{tab:complexity_nonconvex}. In this paper, we propose a new augmented Lagrangian method for solving inequality constrained weakly convex optimization problems and analyze its iteration complexity.

\begin{table}[htbp]
\centering
\caption{List of complexity results for some augmented Lagrange-type methods.}
\label{tab:complexity_nonconvex}
\begin{threeparttable}
\scriptsize
\setlength{\tabcolsep}{3pt}
\renewcommand{\arraystretch}{2.15}
\begin{tabularx}{\textwidth}{
>{\centering\arraybackslash}m{2.2cm}
>{\centering\arraybackslash}m{2.7cm}
>{\centering\arraybackslash}m{4.5cm}
>{\centering\arraybackslash}m{2.6cm}
>{\centering\arraybackslash}X}
\toprule
Method & Problem & Optimality measure & Assumptions & Complexity \\
\midrule

ALM \cite{GrapigliaYuan2021}
&
$\begin{aligned}
\min_x\quad & f(x)\\
\mathrm{s.t.}\quad & c_i(x)=0,\ i \in E,\\
& c_i(x)\ge 0,\ i \in I
\end{aligned}$
&
$\begin{aligned}
\|\nabla f(x)&-\sum_{i=1}^m \lambda_i \nabla c_i(x)\| \le \varepsilon,\\
\|c_E(x)\| &\le \varepsilon,\quad
\|c_I^{(-)}(x)\| \le \varepsilon,\\
\lambda_i &\ge 0,\ i\in\mathcal I,\\
\lambda_i &=0\ \text{if } c_i(x)>\varepsilon,\ i\in\mathcal I
\end{aligned}$

&
$f,c_i\in C^1$; BLS; feasible start
&
outer iter.:
$\mathcal O(|\log\varepsilon|)$
or
$\mathcal O\!\left(\varepsilon^{-2/(\alpha-1)}\right)$
\\[10pt]

Prox-AL \cite{XieWright2021}
&
$\begin{aligned}
\min_{x}\quad & f(x) \\
\mathrm{s.t.}\quad & c(x)=0
\end{aligned}$
&
$\begin{aligned}
\|\nabla f(x)+\nabla c(x)\lambda\| &\le \varepsilon,\\
\|c(x)\| &\le \varepsilon
\end{aligned}$
(and)
$\begin{aligned}
d^\top \nabla_{xx}^2L(x,\lambda)d &\ge -\varepsilon\|d\|^2,\ \forall d\in S(x)
\end{aligned}$
&
$f,c_i\in C^2$; compact level set; $\nabla f,\nabla c$ Lipschitz; $\sigma_{\min}(\nabla c)\ge \sigma>0$
&
outer iter.:
$\mathcal O(\varepsilon^{-(2-\eta)})$
\\[10pt]

Prox-PDA \cite{HongHajinezhadZhao2017}
&
$\begin{aligned}
\min_x\quad & f(x)\\
\mathrm{s.t.}\quad & Ax=0
\end{aligned}$
&
$\|\nabla_x L_\beta(x,\mu)\|^2+\|Ax\|^2\le \varepsilon$
&
$\nabla f$ Lipschitz; $A^\top A+B^\top B\succeq I$; $f(x)+\frac{\delta}{2}\|Ax\|^2\ge \underline f$
&
$\mathcal O(1/r)$; equivalently $\mathcal O(\varepsilon^{-1})$
\\[10pt]

PProx-PDA \cite{HajinezhadHong2019}
&
$\begin{aligned}
\min_{x\in X}\quad & f(x)+h(x)\\
\mathrm{s.t.}\quad & Ax=b
\end{aligned}$
&
$\begin{aligned}
\|Ax-b\|^2 &\le \varepsilon,\\
\exists\,\xi\in\partial h(x), \quad &\forall\,x'\in X, \\
\langle \nabla f(x)+A^\top\lambda+\xi,&\ x-x'\rangle \le 0,
\end{aligned}$
&
$\nabla f$ Lipschitz; $h$ convex; $X$ convex compact; $A^\top A+B^\top B\succeq I$
&
$\mathcal O(\varepsilon^{-2})$
\\[20pt]

DPALM \cite{Xuyy2025}
&
$\begin{aligned}
\min_{x\in \mathbb R^d}\quad & F(x)=f(x)+h(x)\\
\mathrm{s.t.}\quad & Ax=b,\,g(x)\leq 0
\end{aligned}$
&
$\begin{aligned}
{\rm dist}\,(0, \partial F(x)+{\cal J}g(x)^Tz+A^Ty)&\leq \varepsilon\\
\sqrt{\|Ax-b\|^2+\|g(x)_+\|^2} &\le \varepsilon,\\
z\geq 0,\,\displaystyle \sum_{i=1}^m |z_ig_i(x)|&\leq \varepsilon,
\end{aligned}$
&
$f$ weakly convex; $h$,$g_i,i=1,\ldots$ convex
&
outer iter.:
$\mathcal O(\varepsilon^{-2})$
\\[10pt]

QPALM (this paper)
&
$\begin{aligned}
\min_{x\in X}\quad & f(x) \\
\mathrm{s.t.}\quad & g_i(x)\le 0,\ i\in\mathcal I
\end{aligned}$
&
$\begin{aligned}
\|\alpha [x-\Pi_X(x-\alpha^{-1}\nabla_x L(x,\lambda))]\|^2 &\leq \varepsilon,    \\
 -\langle \lambda, g(x)\rangle  \leq \varepsilon, \,\,g(x)\leq \varepsilon \textbf{1}_p, \,\,\lambda &\geq -\varepsilon \textbf{1}_p.
\end{aligned}$
&
$f$ and $g_i,i\in {\cal I}$ smooth weakly convex; $X$ convex compact, Slater CQ
& outer iter.:
$\mathcal O(\varepsilon^{-3})$
\\

\bottomrule
\end{tabularx}
\begin{tablenotes}[flushleft]
\footnotesize
\item BLS = bounded level set. The optimality measures are written in a unified style for ease of comparison.
\end{tablenotes}
\end{threeparttable}
\end{table}

In this paper, we consider the following nonlinear programming problem
\begin{equation}\label{eq:1}
\begin{array}{rl}
\displaystyle \min_{x \in X} & f(x)\\[4pt]
{\rm s.t.} & g_i(x) \leq 0, i=1,\ldots,p.\\
\end{array}
\end{equation}
Here $X \subset \mathbb R^n$ is a nonempty  closed convex set, and $f: {\cal O}_0 \rightarrow \mathbb R$, $g_i:{\cal O}_0  \rightarrow \mathbb R$, $i=1,\ldots, p$, where ${\cal O}_0 \subset \mathbb R^n$ is an open convex set containing $X$. The Lagrangian of Problem (\ref{eq:1}) is defined by
\begin{equation}\label{eq:lag}
L(x,\lambda)=f(x)+\displaystyle \sum_{j=1}^p \lambda_j g_j(x), \quad (x, \lambda) \in {\cal O}_0 \times \mathbb R^p.
\end{equation}
We say $(x^*,\lambda^*)\in {\cal O}_0 \times \mathbb R^p$ satisfies the KKT conditions of Problem (\ref{eq:1}) if
\begin{equation}\label{eq:KKToriginalP}
\left\{
\begin{array}{l}
0\in \nabla_x L(x^*,\lambda^*)+N_{X}(x^*),\\[6pt]
0\geq g(x^*) \bot \lambda^* \geq 0,
\end{array}
\right.
\end{equation}
where $N_{X}(x^*)$ is the normal cone of $X$ at $x^* \in X$ in the sense of convex analysis, see \cite{Rock70}. The conditions (\ref{eq:KKToriginalP}) are equivalent to
\begin{equation}\label{eq:KKTref}
\left\{
\begin{array}{l}
\|R_{\alpha}(x^*,\lambda^*)\|=0,\\[6pt]
0\geq g(x^*) \bot \lambda^* \geq 0,
\end{array}
\right.
\end{equation}
where
$$
R_{\alpha}(x,\lambda)=\alpha [x-\Pi_X(x-\alpha^{-1}\nabla_x L(x,\lambda))],
$$
where $\Pi_X(z)$ denotes the Euclidean projection of $z$ onto $X$. This leads us to define $\varepsilon$-approximate KKT point.
\begin{definition}\label{eps-KKT}
 We say $(x,\lambda)$ is a $\varepsilon$-approximate KKT point if the following conditions hold:
\begin{equation}\label{eq:KKTrefa}
\left\{
\begin{array}{l}
\|R_{\alpha}(x,\lambda)\|\leq \varepsilon, -\langle \lambda, g(x)\rangle  \leq \varepsilon \\[6pt]
 g(x)\leq \varepsilon \textbf{1}_p, \lambda \geq -\varepsilon \textbf{1}_p.
\end{array}
\right.
\end{equation}
\end{definition}

The quadratically constrained quadratic programming (QCQP) approximation for Problem (\ref{eq:1})  at $x^t$ is defined as
\begin{equation}\label{eq:LPt}
\begin{array}{ll}
\min & q^t_0(x) \\[4pt]
{\rm s.t.} &  q^t_i(x)  \leq 0,i=1,\ldots,p,\\[4pt]
 & x\in X,
\end{array}
\end{equation}
where $q^t_j(x),j=0,1,\ldots,p$ are quadratic approximations of $f(x)$ and $g_j(x)$, $j=1,\ldots,p$ at $x^t$, respectively. Functions $q^t_j(x),j=0,1,\ldots,p$  are defined by
\begin{equation}\label{eq:qs}
\begin{array}{l}
q^t_0(x)= f(x^t)+\langle \nabla_xf(x^t),x-x^t \rangle+\displaystyle \frac{1}{2}\langle \Sigma^t_0(x-x^t),x-x^t\rangle\\[6pt]
q^t_i(x)= g_i(x^t)+ \langle \nabla_xg_i(x^t),x-x^t \rangle +\displaystyle \frac{1}{2}\langle \Sigma^t_i(x-x^t),x-x^t\rangle,\,\,
i=1,\ldots,p.
\end{array}
\end{equation}
The augmented Lagrange function for Problem (\ref{eq:LPt})
is defined by
\begin{equation}\label{augL}
{\cal L}^t_{\sigma}(x,\lambda): =q_0^t(x) +\displaystyle \frac{1}{2\sigma}\left[ \sum_{i=1}^p[\lambda_i+\sigma q^t_i(x)]_+^2-\|\lambda\|^2\right]
\end{equation}
for $(x,\lambda)\in \mathbb R^n\times \mathbb R^p$. Then  proximal Lagrangian method for Problem (\ref{eq:1}) based on quadratic approximations may be described as follows.\mbox{}\\[4pt]
{\bf QPALM}: A quadratic  approximation based  proximal augmented Lagrangian method
\begin{description}
\item[Step 0 ] Input $\lambda^1=0 \in \mathbb R^p$, $x^1 \in \mathbb R^n$, a positive  integer $N$. Positive parameters $\sigma$ and $\alpha$. Set $t:=1$.
\item[ Step 1] Set
 \begin{equation}\label{xna}
\begin{array}{l}
x^{t+1}= \displaystyle\hbox{arg}\min \,\left\{ {\cal L}^t_{\sigma }(x,\lambda^t) +\displaystyle\frac{\alpha}{2}\|x-x^t\|^2,x \in X\right\}\\[5mm]
\lambda_i^{t+1}=[\lambda_i^t+\sigma q^t_i(x^{t+1})]_+,\,\,i=1,\ldots,p.
\end{array}
\end{equation}
\item[ Step 2] Set $t:=t+1$  and go to Step 1.
\end{description}
In the above algorithm, $[y]_+=\Pi_{\mathbb R^p_+}[y]$ denotes  the projection of $y$ onto $\mathbb R^p_+$ for any $y \in \mathbb R^p$. As far as we are concerned, the main contributions of this paper are summarized as follows.
\begin{itemize}
\item If we set $\sigma = T^{-2/3}$ and $\alpha = 16\gamma_5 T^{1/3}$ in QPALM, where $\gamma_5$ is a constant, then after $T$ iterations, the squared norm of the gradient of the Moreau envelope of the Lagrangian, the average constraint violation, and the average complementarity violation converge to zero at a rate of $O(T^{-1/3})$.
\item At each iteration $t$, the subproblem minimizes a strongly convex function over $X$ and can be solved efficiently using Nesterov's accelerated projected gradient method.
\end{itemize}
The remainder of this paper is organized as follows. In Section 2, we develop properties of QPALM that will be used to analyze Lagrangian gradient violation, constraint violation, and complementarity violation. In Section 3, we establish the iteration complexity of QPALM in terms of the total number of subproblems solved (each subproblem is a strong convex optimization problem with constraint set $X$). In Section 4, we discuss how to solve the subproblems of QPALM and report numerical results from its implementation. We conclude with a summary and discussion in Section 5.

\section{Properties of QPALM}\label{sec-3}
\setcounter{equation}{0}
 In this section, we first list the assumptions of Problem (\ref{eq:1}),  the assumptions about  parameters in QPALM, then the properties of QPALM. Let $\Phi$ be the feasible region of Problem (\ref{eq:1}):
$$
\Phi=\left\{x\in X: g_i(x) \leq 0, i=1,\ldots,p\right\}.
$$
We make the following assumptions about problem functions, which will be used in somewhere.
\begin{description}
    \item[(A1)]Let $D_0>0$ such that
        $$ \|x-z\|\leq D_0, \forall x,z \in X.
        $$

\item[(A2)] Let  $\nu_g>0$ such that
                $$
         \|g(x)\| \leq \nu_g, \forall x \in {\cal O}_0.
        $$
        \item[(A3)]  Let $\kappa_f>0$ and $\kappa_g>0$ such that
                $$
        \|\nabla_x f(x)\| \leq \kappa_f, \,\, \|\nabla_x g_i(x)\| \leq \kappa_g,i=1,\ldots, p, \forall x \in {\cal O}_0.
$$
                \item[(A4)] There exist $\epsilon_0>0$ and $\widehat x \in X$ such that
        $$
        g_i(\widehat x) \leq -\epsilon_0, \,\, i =1,\ldots, p.
$$
         \item[(A5)]There are positive numbers $L_i>0$,$i=0,\ldots,p$ such that $f$ is $L_0$-weakly convex and $g_i$ is $L_i$-weakly convex for $i=1,\ldots, p$.
\end{description}
 We also need the following assumptions about parameters in the algorithm.
\begin{description}
\item[(B1)] Assume that $\Sigma^t_i$ is negatively semidefinite and $\|\Sigma^t_i\|\leq \kappa_{H}$ for $i=1,\ldots, p$, for some positive constant $\kappa_H>0$. We define $C_{\Sigma} := \sqrt{p} \kappa_{H}$.
\item[(B2)] Assume that $q^t_i(x)\leq g_i(x)$ for $i=1,\ldots,p$.
\item[(B3)] Assume that $\Sigma_{0}^{t} \in \mathbb{S}^{n}$ is updated dynamically as $\Sigma_{0}^{t} =-\displaystyle  \sum_{i=1}^{p} \lambda_{i}^{t} \Sigma_{i}^{t} + I$. Thus, its norm is bounded by an increasing function of the multiplier's norm: $||\Sigma_{0}^{t}|| \le \kappa_{\Sigma}(M) := 1 + C_{\Sigma} M$, provided $||\lambda^{t}|| \le M$.
\item[(B4)] Assume that ${\cal L}^t_{\sigma }(x,\lambda^t)$ is a convex function for every $t \in \textbf{N}$.
\item[(B5)] Assume that the problem parameters satisfy the strict regularity condition:
$$ \rho := \frac{D_0^2 C_{\Sigma}}{\epsilon_0} < 1. $$
This condition essentially requires the Slater condition modulus $\epsilon_0$ to be sufficiently large relative to the domain size and the curvature of the constraints.
\end{description}
\begin{remark}\label{remarkB}
If $f$ and $g_i$, $i=1,\ldots,p$ are continuous differentiable over ${\cal O}_0$, then the
conditions (A2) and (A3) are satisfied if Assumption (A1) holds. Conditions (B1)--(B4) are not restricted conditions when $f$ and $g_i$,$i=1,\ldots, p$ satisfy Assumptions (A1)--(A5). We will show, in Section \ref{Sec3},  how to construct $\Sigma^t_i, i=0,1,\ldots, p$ to satisfy
conditions (B1)--(B4).

Furthermore, since $\Sigma_0^t$ depends dynamically on $\lambda^t$ by Assumption (B3), its norm bound $\kappa_\Sigma(M)$ is a function of the multiplier's upper bound $M$. Consequently, the parametric bounds derived in the following lemmas will explicitly depend on $M$ whenever assuming $||\lambda^t|| \le M$.
\end{remark}

Now we develop properties of QPALM, which will be used in the analysis of Lagrange gradient, and constraint violation and complementarity violation. The following  lemma will be used several times in the sequel.
\begin{lemma}\label{lem:opt-x}
 Suppose that  $\Sigma^t_0\in \mathbb S^n$ is positively definite such that Assumption (B4) holds. Then for any $z\in  X$, we have
 \begin{equation}\label{eq:opt-x-1}
\begin{array}{ll}
\displaystyle\langle \nabla_x f(x^t),x^{t+1}-x^t \rangle  +\displaystyle \frac{1}{2}\langle \Sigma^t_0(x^{t+1}-x^t),x^{t+1}-x^t\rangle+ \frac{1}{2\sigma}\|\lambda^{t+1}\|^2
+ \frac{\alpha}{2} \|x^{t+1}-x^t\|^2  \\[5pt]
\leq \displaystyle\langle \nabla_x f(x^t),z-x^t \rangle +\displaystyle \frac{1}{2}\langle \Sigma^t_0(z-x^t),z-x^t\rangle \\[15pt]
\quad\quad + \displaystyle\frac{1}{2\sigma}\left[\displaystyle \sum_{i=1}^p[\lambda^t_i+\sigma (g_i(x^t)+\langle \nabla_x g_i(x^t), z-x^t \rangle)+\displaystyle \frac{1}{2}\langle \Sigma^t_i(z-x^t),z-x^t\rangle]_+^2\right]\\[15pt]
\quad\quad+ \displaystyle\frac{\alpha}{2}(\|z-x^t\|^2-\|z-x^{t+1}\|^2).
\end{array}
\end{equation}
In particular, if we take $z=x^t$, it yields
\begin{equation}\label{eq:opt-x-2}
\begin{array}{ll}
\displaystyle\langle \nabla_x f(x^t),x^{t+1}-x^t \rangle + \frac{1}{2\sigma}\|\lambda^{t+1}\|^2
+ \alpha \|x^{t+1}-x^t\|^2+\displaystyle \frac{1}{2}\|x^{t+1}-x^t\|^2_{\Sigma^t_0}\\[10pt]
\leq  \displaystyle\frac{1}{2\sigma}\left[ \sum_{i=1}^p[\lambda^t_i+\sigma g_i(x^t)]_+^2\right].
\end{array}
\end{equation}
\end{lemma}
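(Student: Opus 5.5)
The plan is to use the $\alpha$-strong convexity of the subproblem objective in (\ref{xna}) and then read off the terms. Define
$$
\varphi_t(x):={\cal L}^t_{\sigma}(x,\lambda^t)+\frac{\alpha}{2}\|x-x^t\|^2 .
$$
By Assumption (B4), ${\cal L}^t_\sigma(\cdot,\lambda^t)$ is convex. Adding $\frac{\alpha}{2}\|x-x^t\|^2$ therefore makes $\varphi_t$ $\alpha$-strongly convex, so $x^{t+1}$ is well defined as the unique minimizer over $X$. Moreover, $\varphi_t$ is continuously differentiable, since $y\mapsto [y]_+^2$ is $C^1$ and each $q^t_i$ is quadratic.

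\textbf{Step 1 (optimality and strong convexity).} The first-order optimality condition gives $\langle \nabla\varphi_t(x^{t+1}),z-x^{t+1}\rangle\ge 0$ for all $z\in X$. Strong convexity gives
$$
\varphi_t(z)\ge \varphi_t(x^{t+1})+\langle \nabla\varphi_t(x^{t+1}),z-x^{t+1}\rangle+\frac{\alpha}{2}\|z-x^{t+1}\|^2 .
$$
Combining the two yields $\varphi_t(z)\ge\varphi_t(x^{t+1})+\frac{\alpha}{2}\|z-x^{t+1}\|^2$ for every $z\in X$.

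\textbf{Step 2 (expand both sides).} On the left, I will use the multiplier update in (\ref{xna}): $[\lambda^t_i+\sigma q^t_i(x^{t+1})]_+=\lambda^{t+1}_i$. Hence the penalty part of ${\cal L}^t_\sigma(x^{t+1},\lambda^t)$ equals $\frac{1}{2\sigma}\big(\|\lambda^{t+1}\|^2-\|\lambda^t\|^2\big)$. On the right, I will write $q^t_0(z)$ and $q^t_i(z)$ explicitly using (\ref{eq:qs}); the bracket in (\ref{eq:opt-x-1}) is to be read as $\lambda^t_i+\sigma q^t_i(z)$. The constants $f(x^t)$ and $-\frac{1}{2\sigma}\|\lambda^t\|^2$ appear on both sides and cancel. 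Moving $\frac{\alpha}{2}\|z-x^{t+1}\|^2$ to the right then gives exactly (\ref{eq:opt-x-1}).

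\textbf{Step 3 (the case $z=x^t$).} Put $z=x^t$ in (\ref{eq:opt-x-1}). The linear and quadratic terms in $z-x^t$ vanish, and each bracket reduces to $[\lambda^t_i+\sigma g_i(x^t)]_+^2$. The term $\frac{\alpha}{2}(0-\|x^t-x^{t+1}\|^2)$ moves to the left, where it combines with the existing $\frac{\alpha}{2}\|x^{t+1}-x^t\|^2$ to give $\alpha\|x^{t+1}-x^t\|^2$. The quadratic term is written as $\frac12\|x^{t+1}-x^t\|^2_{\Sigma^t_0}$, which is meaningful because $\Sigma^t_0$ is positive definite. This is (\ref{eq:opt-x-2}).

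The only delicate point is Step 1: it is the optimality condition over the constraint set $X$ combined with strong convexity. Since $\varphi_t$ is differentiable and convex plus a quadratic, I would use the smooth variational inequality directly rather than the subdifferential and normal cone. Everything after that is bookkeeping.
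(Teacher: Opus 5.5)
Your proposal is correct and is essentially the paper's argument. The paper observes that $x^{t+1}$ also minimizes $\varphi_t(x)-\frac{\alpha}{2}\|x-x^{t+1}\|^2$ over $X$, since this function is still convex and has the same gradient at $x^{t+1}$; that is exactly your inequality $\varphi_t(z)\ge\varphi_t(x^{t+1})+\frac{\alpha}{2}\|z-x^{t+1}\|^2$ for $z\in X$, and after it both proofs only expand $q^t_0$, $q^t_i$ and use $\lambda^{t+1}_i=[\lambda^t_i+\sigma q^t_i(x^{t+1})]_+$.

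Your reading of the bracket as $\lambda^t_i+\sigma q^t_i(z)$ is the intended one (the displayed statement misplaces the factor $\sigma$). Also, the specialization $z=x^t$ tacitly needs $x^t\in X$: this is automatic for $t\ge 2$, and for $t=1$ it is an assumption on the initial point that the paper also makes implicitly.
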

{\bf Proof}.
Noting that the minimization problem for defining $x^{t+1}$ in (\ref{xna}) is a strongly convex optimization problem and in view of its optimality conditions, we have that $x^{t+1}$ is also the optimal solution to the following problem
\[
\begin{array}{ll}
\min\limits_{x \in  X}\quad  q^t_0(x)+  \displaystyle\frac{1}{2\sigma} \displaystyle\sum_{i=1}^p\left[\lambda_i^t+\sigma q^t_i(x)\right]_+^2+\displaystyle \frac{\alpha}{2}(\|x-x^t\|^2-\|x-x^{t+1}\|^2).
\end{array}
\]
Then,
the claimed results are obvious.
\hfill $\Box$

In order to give a bound for $\sum_{t=1}^T g_i(x^t)$, we need to estimate an upper bound of $\|x^{t+1}-x^t\|$, which is given in the following lemma.
\begin{lemma}\label{lem:3}
Suppose Assumption (B3)- Assumption (B1) hold and suppose $\|\lambda^t\| \le M$.
If $2\alpha-p(\kappa_g+\kappa_{H}D_0/2)^2\sigma>0$, then
\begin{equation}\label{eq:diffX}
\begin{array}{ll}
\|x^{t+1}-x^t\|
&\leq \displaystyle\frac{2}{2\alpha-p(\kappa_g+\kappa_{H}D_0/2)^2\sigma}
\left(\kappa_f+(\kappa_g+\kappa_{H}D_0/2)\sqrt{p}
[\|\lambda^t\|+\nu_g\sigma]\right).
\end{array}
\end{equation}
If
$\alpha-p(\kappa_g+\kappa_{H}D_0/2)^2\sigma>0$, then
\begin{equation}\label{eq:diffXa}
\begin{array}{ll}
\|x^{t+1}-x^t\|
&\leq \displaystyle\frac{2}{\alpha}
\left(\kappa_f+(\kappa_g+\kappa_{H}D_0/2)\sqrt{p}
[\|\lambda^t\|+\nu_g\sigma]\right).
\end{array}
\end{equation}
\end{lemma}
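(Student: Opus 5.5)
Starting from inequality (\ref{eq:opt-x-2}) of Lemma \ref{lem:opt-x}, I will bound $\|\lambda^{t+1}\|^2$ from below and move everything to one side, leaving an inequality that is quadratic in $d:=\|x^{t+1}-x^t\|$. Since (B3) together with (B1) gives $\Sigma^t_0=-\sum_i\lambda^t_i\Sigma^t_i+I\succeq I\succ 0$, the term $\frac12\|x^{t+1}-x^t\|^2_{\Sigma^t_0}$ is nonnegative and may be dropped. Hence
\[
\alpha d^2 \le \kappa_f d+\frac{1}{2\sigma}\sum_{i=1}^p\Big([\lambda^t_i+\sigma g_i(x^t)]_+^2-[\lambda^t_i+\sigma q^t_i(x^{t+1})]_+^2\Big),
\]
where $\lambda^{t+1}_i=[\lambda^t_i+\sigma q^t_i(x^{t+1})]_+$ and (A3) bounds the linear term.

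The key step is to control the difference of squares. I will write $a_i=\lambda^t_i+\sigma g_i(x^t)$ and $b_i=\lambda^t_i+\sigma q^t_i(x^{t+1})$, and use that $[\cdot]_+$ is $1$-Lipschitz together with $u^2-v^2=(u-v)(u+v)$. This gives $[a_i]_+^2-[b_i]_+^2\le |a_i-b_i|\,(2[a_i]_++|a_i-b_i|)$. Next,
\[
a_i-b_i=-\sigma\Big(\langle\nabla g_i(x^t),x^{t+1}-x^t\rangle+\tfrac12\langle\Sigma^t_i(x^{t+1}-x^t),x^{t+1}-x^t\rangle\Big),
\]
so (A3), (B1) and $d\le D_0$ (from (A1), since both iterates lie in $X$) give $|a_i-b_i|\le\sigma(\kappa_g+\kappa_HD_0/2)d=:\sigma c\,d$. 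I also have $[a_i]_+\le|\lambda^t_i|+\sigma|g_i(x^t)|$. Summing over $i$ and applying Cauchy--Schwarz with (A2) yields $\sum_i[a_i]_+\le\sqrt p(\|\lambda^t\|+\sigma\nu_g)$. Consequently
\[
\frac{1}{2\sigma}\sum_i\big([a_i]_+^2-[b_i]_+^2\big)\le c\sqrt p(\|\lambda^t\|+\nu_g\sigma)\,d+\frac{p c^2\sigma}{2}d^2 .
\]
I expect this bookkeeping to be the main obstacle. Getting exactly the factor $p$ rather than $\sqrt p$ in the quadratic term, and the factor $2$ in the denominator, depends on bounding $\sum_i|a_i-b_i|^2\le p\sigma^2c^2d^2$ in the right place.

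Combining the two displays gives $(\alpha-\frac{pc^2\sigma}{2})d^2\le(\kappa_f+c\sqrt p(\|\lambda^t\|+\nu_g\sigma))d$. Dividing by $d$ (the case $d=0$ is trivial) produces (\ref{eq:diffX}) under the hypothesis $2\alpha-pc^2\sigma>0$. For (\ref{eq:diffXa}), the condition $\alpha-pc^2\sigma>0$ gives $2\alpha-pc^2\sigma>\alpha$, so the coefficient $2/(2\alpha-pc^2\sigma)$ is at most $2/\alpha$. If the constants come out slightly differently, for instance without the factor $\frac12$ in the quadratic term, the same argument still gives a bound of the stated form, and the second hypothesis is exactly what yields the $2/\alpha$ version.
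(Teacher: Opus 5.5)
Your proof is correct and follows the paper's own argument step for step. You start from (\ref{eq:opt-x-2}), drop the nonnegative $\Sigma^t_0$-term, bound $[a_i]_+^2-[b_i]_+^2$ by $|a_i-b_i|\,(2[a_i]_++|a_i-b_i|)$ with $|a_i-b_i|\le\sigma(\kappa_g+\kappa_{H}D_0/2)\|x^{t+1}-x^t\|$, sum using Cauchy--Schwarz, and divide by $\|x^{t+1}-x^t\|$. This gives exactly the constant $2/(2\alpha-p(\kappa_g+\kappa_{H}D_0/2)^2\sigma)$, so your closing hedge about constants is unnecessary, and your derivation of (\ref{eq:diffXa}) from $2\alpha-p(\kappa_g+\kappa_{H}D_0/2)^2\sigma>\alpha$ correctly fills in a step the paper leaves implicit.
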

{\bf Proof}.
Since $x^{t+1}$ is a solution to Problem (\ref{xna}), we have from Assumption (B3) and Assumption (B2) that
$$
\begin{array}{l}
\langle \nabla_x f(x^t), x^{t+1}-x^t\rangle +\displaystyle \frac{1}{2} \left \langle \Sigma^t_0(x^{t+1}-x^t),x^{t+1}-x^t\right \rangle+\displaystyle \frac{1}{2\sigma}\|\lambda^{t+1}\|^2 +\displaystyle \frac{\alpha}{2}\|x^{t+1}-x^t\|^2\\[10pt]
 \leq \displaystyle \frac{1}{2\sigma}\|[\lambda^{t} +\sigma q^t(x^t)]_+\|^2
= \displaystyle \frac{1}{2\sigma}\|[\lambda^{t} +\sigma g(x^t)]_+\|^2
\leq \displaystyle \frac{1}{2\sigma}\|[\lambda^{t} +\sigma g(x^t)]\|^2,
\end{array}
$$
 we have
\[
\alpha \|x^{t+1}-x^t\|^2\leq  \kappa_f\|x^{t+1}-x^t\|+
\frac{1}{2\sigma}\sum_{i=1}^p\left([a_i]_+^2-[b_i]_+^2\right),
\]
in which, for simplicity, we use
\[
a_i:=\lambda_i^t+\sigma g_i(x^t),\quad b_i:=\lambda_i^t+\sigma (g_i(x^t)+\langle \nabla_x g_i(x^t),(x^{t+1}-x^t)\rangle+\displaystyle \frac{1}{2}\langle \Sigma^t_i(x^{t+1}-x^t),x^{t+1}-x^t\rangle).
\]
Noticing that
\[
\begin{array}{ll}
[a_i]_+^2-[b_i]_+^2&=([a_i]_++[b_i]_+)([a_i]_+-[b_i]_+)\\[8pt]
&\leq (|a_i|+|b_i|)\cdot|a_i-b_i|\\[8pt]
&\leq (2|a_i|+|b_i-a_i|)\cdot|a_i-b_i|\\[8pt]
&=2|a_i|\cdot|a_i-b_i|+(a_i-b_i)^2\\[8pt]
&\leq 2|\lambda_i^t+\sigma g_i(x^t)|\cdot\sigma\left(\kappa_g\|x^{t+1}-x^t\|+\displaystyle \frac{1}{2}\|\Sigma^t_i\|
\|x^{t+1}-x^t\|^2\right)\\[8pt]
&\quad +\sigma^2\left(\kappa_g\|x^{t+1}-x^t\|+\displaystyle \frac{1}{2}\|\Sigma^t_i\|
\|x^{t+1}-x^t\|^2\right)^2,
\end{array}
\]
we obtain
\[
2\alpha\|x^{t+1}-x^t\|\leq 2\kappa_f+\sum_{i=1}^p\left[(2\kappa_g+\kappa_{H}D_0)|\lambda_i^t+\sigma g_i(x^t)|+\sigma(\kappa_g+\kappa_{H}D_0/2)^2\|x^{t+1}-x^t\|\right].
\]
If $2\alpha-p(\kappa_g+\kappa_{H}D_0/2)^2\sigma>0$, it yields
\[
\begin{array}{ll}
\|x^{t+1}-x^t\|&\leq \displaystyle \frac{2}{2\alpha-p(\kappa_g+\kappa_{H}D_0/2)^2\sigma}
\left(\kappa_f+(\kappa_g+\kappa_{H}D_0/2)\sum_{i=1}^p|\lambda_i^t+\sigma g_i(x^t)|\right) \\[12pt]
&\leq \displaystyle\frac{2}{2\alpha-p(\kappa_g+\kappa_{H}D_0/2)^2\sigma}
\left(\kappa_f+(\kappa_g+\kappa_{H}D_0/2)\sqrt{p}
[\|\lambda^t\|+\nu_g\sigma]\right),
\end{array}
\]
namely (\ref{eq:diffX}) is satisfied,
where the last inequality is obtained from the facts that $\sum_{i=1}^p|\lambda_i^t|\leq\sqrt{p}\|\lambda^t\|$ and $\sum_{i=1}^p|g_i(x^t)|\leq\sqrt{p}\|g(x^t)\|\leq\sqrt{p}\nu_g$.
\hfill $\Box$

\begin{lemma}\label{lem:1}
Let $(x^t,\lambda^t)$ be generated by QPALM,  Assumptions (A1)-- (A3), (B2) and  (B1) be satisfied.
Then
\begin{equation}\label{eq:lambda2}
\|\lambda^{t+1}\|^2 \leq \|\lambda^t\|^2 +2 \sigma \langle \lambda^t, g(x^{t+1})\rangle+\sigma^2 \nu_g^2,
\end{equation}
and
\begin{equation}\label{eq:2lambda}
\|\lambda^t\|- \gamma_1\sigma \leq \|\lambda^{t+1}\| \leq \|\lambda^t\|+ \gamma_1 \sigma.
\end{equation}
where
$$
\gamma_1=\nu_g+\sqrt{p} \left(\kappa_gD_0+\displaystyle \frac{1}{2}\kappa_{H}D_0^2\right).
$$
Moreover, one has
\begin{equation}\label{eq:lambdainorm}
|\lambda^{t+1}_i-\lambda^t_i| \leq \sigma \left[\nu_g+\left(\kappa_g+\displaystyle \frac{\kappa_{H}D_0}{2}\right)\|x^{t+1}-x^t\|\right]\leq \gamma_2\sigma,
\end{equation}
where
$$
\gamma_2=
\left[\nu_g+\left(\kappa_g D_0+\displaystyle \frac{\kappa_{H}D_0^2}{2}\right)\right].
$$
\end{lemma}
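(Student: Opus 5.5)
The proof only needs two elementary facts about the projection $[\cdot]_+$ onto $\mathbb R_+$: it is monotone and nonexpansive. It also needs the bounds (A1)--(A3) and (B1)--(B2). I first note that $\lambda^t\geq 0$ for all $t$, since $\lambda^1=0$ and every later multiplier is a projection onto $\mathbb R^p_+$. I also note that $x^t\in X$ for $t\geq 2$, since each $x^{t+1}$ is a minimizer over $X$. For $t=1$ I would either assume $x^1\in X$ or start the estimates at $t\geq 2$, which is how the paper implicitly uses (A1). Every iterate then lies in ${\cal O}_0$, so (A2), (A3) apply at $x^t$ and $x^{t+1}$, and $\|x^{t+1}-x^t\|\leq D_0$.

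\textbf{Proof of (\ref{eq:lambda2}).} I use (B2), $q^t_i(x^{t+1})\leq g_i(x^{t+1})$, together with monotonicity of $[\cdot]_+$. This gives
$0\leq\lambda^{t+1}_i=[\lambda^t_i+\sigma q^t_i(x^{t+1})]_+\leq[\lambda^t_i+\sigma g_i(x^{t+1})]_+$. Since $[a]_+^2\leq a^2$, squaring yields
$(\lambda^{t+1}_i)^2\leq (\lambda^t_i)^2+2\sigma\lambda^t_i g_i(x^{t+1})+\sigma^2 g_i(x^{t+1})^2$.
Summing over $i$ and using $\|g(x^{t+1})\|\leq\nu_g$ from (A2) gives (\ref{eq:lambda2}). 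Going through $g$ rather than squaring $\lambda^t_i+\sigma q^t_i$ directly is the one delicate point: $q^t_i$ may be very negative, so the naive route would not give $\sigma^2\nu_g^2$.

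\textbf{Proof of (\ref{eq:2lambda}) and (\ref{eq:lambdainorm}).} I write $\lambda^t=[\lambda^t]_+$ and use nonexpansiveness of the projection componentwise:
$|\lambda^{t+1}_i-\lambda^t_i|\leq\sigma|q^t_i(x^{t+1})|$.
Set $d=x^{t+1}-x^t$. From (\ref{eq:qs}), (A2), (A3) and (B1),
$$|q^t_i(x^{t+1})|\leq |g_i(x^t)|+\kappa_g\|d\|+\tfrac12\kappa_H\|d\|^2\leq \nu_g+\left(\kappa_g+\tfrac{\kappa_H D_0}{2}\right)\|d\|,$$
where I used $\|d\|^2\leq D_0\|d\|$. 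This is the first inequality in (\ref{eq:lambdainorm}), and $\|d\|\leq D_0$ gives the bound $\gamma_2\sigma$.

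For the norm version, I take the vector form of the same estimate:
$$\|\lambda^{t+1}-\lambda^t\|\leq\sigma\|q^t(x^{t+1})\|\leq\sigma\left(\|g(x^t)\|+\sqrt p\left(\kappa_gD_0+\tfrac12\kappa_HD_0^2\right)\right)\leq\gamma_1\sigma.$$
The triangle inequality then gives both sides of (\ref{eq:2lambda}).

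I do not expect a real obstacle. The only points needing care are the monotonicity trick in the first part and the requirement that the iterates lie in $X$, so that the diameter bound (A1) applies.
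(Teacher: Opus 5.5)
Your proof is correct and follows the paper's argument essentially step for step. The first inequality uses (B2), monotonicity of $[\cdot]_+$ and $[a]_+^2\le a^2$; the other bounds use nonexpansiveness of the projection onto $\mathbb R^p_+$ (with $\lambda^t=[\lambda^t]_+$) together with (A1)--(A3) and (B1). You also write out the componentwise estimate via $\|x^{t+1}-x^t\|^2\le D_0\|x^{t+1}-x^t\|$, which the paper leaves implicit, and you rightly point out that (A1) at $t=1$ requires $x^1\in X$, which the paper passes over.
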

{\bf Proof}.
Noting that for any $a \in \mathbb R$, $[a]_+^2 \leq a^2$, we have from Assumptions (B2) and (A2) that
$$
\begin{array}{ll}
\|\lambda^{t+1}\|^2 &=\displaystyle \sum_{i=1}^p [\lambda_i^t+\sigma q^t_i(x^{t+1})]_+^2 \leq
\displaystyle \sum_{i=1}^p [\lambda^t_i+\sigma g_i(x^{t+1})]_+^2\leq \displaystyle \sum_{i=1}^p [\lambda^t_i+\sigma g_i(x^{t+1}))]^2\\[6pt]
&= \displaystyle \sum_{i=1}^p\left( [\lambda^t_i]^2+2\sigma g_i(x^{t+1}))+\sigma^2  g_i(x^{t+1})^2\right)\leq \|\lambda^t\|^2 +2 \sigma \langle \lambda^t, g(x^{t+1})\rangle+\sigma^2 \nu_g^2.
\end{array}
$$
It follows from the nonexpansion property of the projection $\Pi_{\mathbb R^p_+}(\cdot)$, we have from Assumptions (A2),(A3) and (B1)  that
$$
\begin{array}{ll}
\|\lambda^{t+1}-\lambda^t\|
& =\|[\lambda^t+\sigma q^t(x^{t+1})]_+-[\lambda^t]_+\|\\[6pt]
&  \leq \sigma\|g(x^t)\|+\sigma \left (\displaystyle \sum_{i=1}^p \left(\|\nabla_x g_i(x^t)\|\|x^{t+1}-x^t\|+\displaystyle \frac{1}{2}\|\Sigma^t_i\|\|x^{t+1}-x^t\|^2\right)^2\right)^{1/2}\\[8pt]
& \leq \sigma\nu_g+\sigma \left (\displaystyle \sum_{i=1}^p \left(\kappa_gD_0+\displaystyle \frac{1}{2}\kappa_{H}D_0^2\right)^2\right)^{1/2} \leq \sigma\left[\nu_g+\sqrt{p} \left(\kappa_gD_0+\displaystyle \frac{1}{2}\kappa_{H}D_0^2\right)\right],
\end{array}
$$
which implies (\ref{eq:2lambda}) from the definition of $\gamma_1$. Inequality (\ref{eq:lambdainorm}) can be obtained from the definition of $\lambda^{t+1}_i$ and Assumptions (A2), (A3) and  (B1). The proof is completed.
\hfill $\Box$\\

The following lemma is similar to Lemma 7 of \cite{Zhang2023}, however the assumptions here are different from those in  \cite{Zhang2023}.
\begin{lemma}\label{lemal7}
Let $\{Z(t)\}$ be a sequence  with $Z(0) = 0$. Suppose there exists an integer $t_0 >0$, real constants $\theta>0$, $\delta_{\max}>0$ and $ 0 <\zeta \leq \delta_{\max}$ such that
\begin{equation}\label{eq:Y9a}
|Z(t+1)-Z(t)| \leq \delta_{\max}
\end{equation}
and
\begin{equation}\label{eq:Y9}
\begin{array}{rl}
Z(t+t_0)-Z(t) & \leq \left
\{
\begin{array}{ll}
t_0 \delta_{\max} & \mbox{if } Z(t) < \theta\\[6pt]
-t_0\zeta & \mbox{if } Z(t) \geq \theta
\end{array}
\right.
\end{array}
\end{equation}
hold for all $t \in \{1,2,\ldots\}.$ Then the inequality holds
\begin{equation}\label{eq:re-ineq}
Z(t) \leq \theta +t_0 \delta_{\max}+t_0 \displaystyle \frac{4 \delta_{\max}^2}{\zeta}\log \left[ \displaystyle \frac{8 \delta_{\max}^2}{\zeta^2} \right], \forall t \in \{1,2,\ldots\}.
\end{equation}
\end{lemma}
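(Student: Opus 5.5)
The approach is a direct strong induction on $t$, which should give the sharper bound $Z(t)\le \theta+t_0\delta_{\max}$ for every $t\ge 1$. The claimed estimate (\ref{eq:re-ineq}) then follows because its extra term is nonnegative. Indeed, $0<\zeta\le\delta_{\max}$ gives $8\delta_{\max}^2/\zeta^2\ge 8>1$. So $\log[8\delta_{\max}^2/\zeta^2]>0$, and $t_0\frac{4\delta_{\max}^2}{\zeta}\log[\cdot]\ge 0$. Since the sequence here is deterministic, I do not need the concentration (Hoeffding-type) machinery behind Lemma 7 of \cite{Zhang2023}. The logarithmic term is simply slack that is kept for compatibility with that stochastic version.

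\emph{Base range $1\le t\le t_0$.} Summing the increment bound (\ref{eq:Y9a}) from $0$ to $t$ and using $Z(0)=0$ gives
\[
Z(t)\le Z(0)+t\,\delta_{\max}\le t_0\delta_{\max}\le \theta+t_0\delta_{\max},
\]
since $\theta>0$.

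\emph{Inductive step $t>t_0$.} Assume $Z(s)\le\theta+t_0\delta_{\max}$ for all $1\le s<t$. Put $s:=t-t_0\ge 1$, so that (\ref{eq:Y9}) applies at index $s$. I split into two cases.
\begin{itemize}
\item If $Z(s)<\theta$, the first branch of (\ref{eq:Y9}) gives $Z(t)\le Z(s)+t_0\delta_{\max}<\theta+t_0\delta_{\max}$.
\item If $Z(s)\ge\theta$, the second branch gives $Z(t)\le Z(s)-t_0\zeta\le Z(s)$. By the induction hypothesis this is at most $\theta+t_0\delta_{\max}$.
\end{itemize}
In both cases $Z(t)\le\theta+t_0\delta_{\max}$, which closes the induction.

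The only point needing care is the indexing. Condition (\ref{eq:Y9}) is assumed only for $t\ge1$, so the first $t_0$ indices cannot be reached from an earlier index through (\ref{eq:Y9}); they must be covered separately, which is exactly what (\ref{eq:Y9a}) together with $Z(0)=0$ does in the base range. I do not expect any genuine obstacle. If the intended statement had conditional expectations, as in the source lemma, the hard part would be the exponential-supermartingale argument producing the $\log$ term, but that is not needed here.
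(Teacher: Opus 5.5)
Your argument is correct. It differs from the paper, which gives no proof of Lemma~\ref{lemal7}: it only points to Lemma~7 of \cite{Zhang2023}. That source lemma is a drift lemma from the stochastic setting, where the drift condition holds in conditional expectation. There the term $t_0\frac{4\delta_{\max}^2}{\zeta}\log[8\delta_{\max}^2/\zeta^2]$ comes out of an exponential (moment-generating-function) drift estimate. For a deterministic sequence your strong induction from index $t-t_0$ to index $t$ is enough, and it gives the sharper bound $Z(t)\le\theta+t_0\delta_{\max}$. It uses the second branch of (\ref{eq:Y9}) only through $\zeta\ge 0$. So the hypothesis $\zeta\le\delta_{\max}$ serves only to make the logarithmic term nonnegative, as you say. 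The cited route is reusable for stochastic variants of QPALM. Yours is self-contained and gives a better constant: in Corollary~\ref{cor:ebound} the contribution $\frac{8\gamma_1^2}{\epsilon_0}\log\frac{32\gamma_1^2}{\epsilon_0^2}$ to $\kappa_4$ could simply be dropped.

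You should make one indexing point explicit. Your base range uses (\ref{eq:Y9a}) at $t=0$, i.e.\ $|Z(1)-Z(0)|\le\delta_{\max}$. The statement, however, quantifies both (\ref{eq:Y9a}) and (\ref{eq:Y9}) over $t\in\{1,2,\ldots\}$ only. This is a defect of the statement, not of your proof, because with the literal quantifier the lemma is false. Take $t_0=\theta=\delta_{\max}=\zeta=1$, $Z(1)=100$, and $Z(t+1)=Z(t)-1$ until $Z$ reaches $0$, then constant. All hypotheses hold, yet $Z(1)$ exceeds $2+4\log 8$. So state that you read (\ref{eq:Y9a}) as holding for $t\ge 0$; all you actually need is an upper bound such as $Z(1)\le\delta_{\max}$. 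In the application $Z(1)=\|\lambda^1\|=0$, so nothing is lost.
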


\begin{lemma}\label{lemal5}
 Let $s > 0$ be an arbitrary integer. Let  Assumptions (A1) -- (A3) and (B3)-- (B4) be satisfied. Suppose $||\lambda^l|| \le M$ for all $l \in \{t, \ldots, t+s-1\}$. At each round $t \in \{1,2,\ldots\}$ in QPALM, for any $\alpha > 2\kappa_{\Sigma}(M)$ and
 \begin{equation}\label{eq:theta9}
 \vartheta (\sigma,\alpha,s, M)=\displaystyle \frac{\epsilon_0\sigma s}{2}+\gamma_1\sigma(s-1)+\displaystyle \frac{\alpha D_0^2}{\epsilon_0s}+\displaystyle \frac{\left(2\kappa_fD_0+\kappa_{\Sigma}(M)D_0^2\right)}{\epsilon_0}+
 \displaystyle \frac{\sigma \nu_g^2}{\epsilon_0},
 \end{equation}
 the following  holds
\begin{equation}\label{eq:6}
|\|\lambda^{t+1}\|-\|\lambda^t\||\leq \sigma \gamma_1
\end{equation}
and
\begin{equation}\label{eq:7}
 \|\lambda^{t+s}\|-\|\lambda^t\| \leq \left
\{
\begin{array}{ll}
s \sigma \gamma_1 & \mbox{if } \|\lambda^t\| < \vartheta (\sigma,\alpha, s, M),\\[6pt]
-s \displaystyle \frac{\sigma \epsilon_0}{2} & \mbox{if } \|\lambda^t\| \geq  \vartheta (\sigma,\alpha,s, M).
\end{array}
\right.
\end{equation}
\end{lemma}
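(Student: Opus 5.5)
The first inequality (\ref{eq:6}) is immediate from (\ref{eq:2lambda}) of Lemma \ref{lem:1}. Summing it over $s$ steps gives $\|\lambda^{t+s}\|-\|\lambda^t\|\le s\sigma\gamma_1$, which is the first case of (\ref{eq:7}). The real content is the drift case $\|\lambda^t\|\ge\vartheta$. For it we plan to derive a one-step inequality for $\|\lambda^{l+1}\|^2-\|\lambda^l\|^2$ by comparing $x^{l+1}$ with the Slater point $\widehat x$ of (A4), using Lemma \ref{lem:opt-x} with $z=\widehat x$.

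\textbf{Step 1 (one-step bound).} In (\ref{eq:opt-x-1}) we take $z=\widehat x$. By (B2), $q^l_i(\widehat x)\le g_i(\widehat x)\le-\epsilon_0$. Then
\[
[\lambda_i^l+\sigma q_i^l(\widehat x)]_+^2\le(\lambda^l_i-\sigma\epsilon_0)^2 \quad\text{when } \lambda_i^l\ge\sigma\epsilon_0,
\]
and it is $0\le(\lambda_i^l)^2$ otherwise. Using $[\lambda_i+\sigma q]_+^2\le \lambda_i^2-2\sigma\epsilon_0\lambda_i+\sigma^2\nu_g^2$ (or a similar elementary bound), we get
\[
\frac{1}{2\sigma}\sum_i[\cdot]_+^2\le\frac{1}{2\sigma}\|\lambda^l\|^2-\epsilon_0\|\lambda^l\|_1+\frac{\sigma\nu_g^2}{2}.
\]
The linear and quadratic $f$-terms on both sides are bounded by (A1), (A3) and (B3): the left side is at least $-\kappa_fD_0-\tfrac12\kappa_\Sigma(M)D_0^2$, and the right side is at most $\kappa_fD_0+\tfrac12\kappa_\Sigma(M)D_0^2$. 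The assumption $\alpha>2\kappa_\Sigma(M)$ lets us absorb the $\Sigma_0^l$ term on the left into $\frac{\alpha}{2}\|x^{l+1}-x^l\|^2$, so the left side stays bounded below. Multiplying by $2\sigma$ and using $\|\lambda^l\|_1\ge\|\lambda^l\|$, we obtain
\[
\|\lambda^{l+1}\|^2\le\|\lambda^l\|^2-2\sigma\epsilon_0\|\lambda^l\|+\sigma(2\kappa_fD_0+\kappa_\Sigma(M)D_0^2)+\sigma^2\nu_g^2+\alpha\sigma\big(\|\widehat x-x^l\|^2-\|\widehat x-x^{l+1}\|^2\big).
\]

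\textbf{Step 2 (telescoping).} We sum this inequality for $l=t,\dots,t+s-1$. The proximal terms telescope to at most $\alpha\sigma D_0^2$. For each $l$ we have $\|\lambda^l\|\ge\|\lambda^t\|-(l-t)\sigma\gamma_1$, so
\[
\sum_l\|\lambda^l\|\ge s\|\lambda^t\|-\tfrac{s(s-1)}{2}\sigma\gamma_1.
\]
This yields
\[
\|\lambda^{t+s}\|^2\le\|\lambda^t\|^2-2\sigma\epsilon_0 s\|\lambda^t\|+\epsilon_0 s(s-1)\sigma^2\gamma_1+s\sigma(2\kappa_fD_0+\kappa_\Sigma D_0^2)+s\sigma^2\nu_g^2+\alpha\sigma D_0^2 .
\]

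\textbf{Step 3 (conclusion).} It suffices to show that the right-hand side is at most $(\|\lambda^t\|-s\sigma\epsilon_0/2)^2$; when $\|\lambda^t\|\ge\vartheta\ge s\sigma\epsilon_0/2$ this gives the second case of (\ref{eq:7}). Expanding, the requirement becomes
\[
\sigma\epsilon_0 s\|\lambda^t\|\ge\tfrac14 s^2\sigma^2\epsilon_0^2+\epsilon_0 s(s-1)\sigma^2\gamma_1+s\sigma(2\kappa_fD_0+\kappa_\Sigma D_0^2)+s\sigma^2\nu_g^2+\alpha\sigma D_0^2 .
\]
Dividing by $\sigma\epsilon_0 s$, the right side becomes exactly
\[
\frac{\epsilon_0\sigma s}{4}+\gamma_1\sigma(s-1)+\frac{2\kappa_fD_0+\kappa_\Sigma D_0^2}{\epsilon_0}+\frac{\sigma\nu_g^2}{\epsilon_0}+\frac{\alpha D_0^2}{\epsilon_0 s},
\]
which is at most $\vartheta$ as defined in (\ref{eq:theta9}). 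This matches the constants in $\vartheta$, which confirms the plan.

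The main obstacle is Step 1. We must get the $-2\sigma\epsilon_0\|\lambda^l\|$ drift cleanly from the positive-part terms when some $\lambda_i^l$ are small. In that case $[\lambda_i^l-\sigma\epsilon_0]_+^2$ is not below $(\lambda_i^l)^2-2\sigma\epsilon_0\lambda_i^l$. We would handle it with the bound $[\lambda_i^l+\sigma q]_+^2\le(\lambda_i^l)^2-2\sigma\epsilon_0\lambda_i^l+\sigma^2\epsilon_0^2$, which holds whenever $q\le-\epsilon_0$ (check the two cases of the sign of $\lambda_i^l-\sigma\epsilon_0$). The extra $\sigma^2\epsilon_0^2$ is absorbed into the $\sigma^2\nu_g^2$ slack, taking $\nu_g\ge\epsilon_0$, which holds since $\|g(\widehat x)\|\ge\epsilon_0$.
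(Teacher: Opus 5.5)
Your route is the paper's: compare $x^{l+1}$ with the Slater point via Lemma~\ref{lem:opt-x} at $z=\widehat x$, telescope over $l=t,\dots,t+s-1$, use $\|\lambda^l\|\ge\|\lambda^t\|-(l-t)\sigma\gamma_1$, and complete the square against $\vartheta$. Steps 2 and 3 are fine. Expanding actually gives the requirement with $-\tfrac14 s^2\sigma^2\epsilon_0^2$ rather than $+$, but yours is a stronger sufficient condition, so nothing breaks.

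\textbf{Step 1 constant.} The constant in your one-step display does not follow from the estimates you describe. Suppose you bound the left-hand linear term below by $-\kappa_fD_0$ and the right-hand one above by $\kappa_fD_0$ separately. Then the $f$-terms contribute $2\kappa_fD_0$ before multiplication by $2\sigma$, i.e.\ $4\sigma\kappa_fD_0$ rather than the $2\sigma\kappa_fD_0$ you display. The $\Sigma_0$ contribution also doubles if you bound the left quadratic by $-\tfrac12\kappa_\Sigma(M)D_0^2$. This matters because $\vartheta$ is prescribed. The available slack is only $\tfrac34\epsilon_0\sigma s$, which cannot absorb an extra $2\kappa_fD_0/\epsilon_0$ when $\sigma s$ is small (e.g.\ $\sigma s=T^{-1/3}$ in the later parameter choice).

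The fix, which is what the paper does in (\ref{eq:8}), is to keep the two linear terms together:
\[
\langle\nabla f(x^l),\widehat x-x^l\rangle-\langle\nabla f(x^l),x^{l+1}-x^l\rangle=\langle\nabla f(x^l),\widehat x-x^{l+1}\rangle\le\kappa_fD_0 .
\]
Then discard $\tfrac12\langle\Sigma_0^l(x^{l+1}-x^l),x^{l+1}-x^l\rangle+\tfrac{\alpha}{2}\|x^{l+1}-x^l\|^2$. It is nonnegative because $\Sigma_0^l\succeq I$ by (B1), (B3), or alternatively by your $\alpha>2\kappa_\Sigma(M)$. Only the right-hand quadratic $\tfrac12\kappa_\Sigma(M)D_0^2$ remains, and multiplying by $2\sigma$ gives exactly $\sigma(2\kappa_fD_0+\kappa_\Sigma(M)D_0^2)$.

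\textbf{Slater term.} The paper avoids the ``obstacle'' you identify. It first uses (B2) and monotonicity of $[\cdot]_+$ to replace $q^l(\widehat x)$ by $g(\widehat x)$. Then $[a]_+^2\le a^2$ gives
\[
\|[\lambda^l+\sigma g(\widehat x)]_+\|^2\le\|\lambda^l\|^2+2\sigma\langle\lambda^l,g(\widehat x)\rangle+\sigma^2\|g(\widehat x)\|^2,
\]
with $\langle\lambda^l,g(\widehat x)\rangle\le-\epsilon_0\|\lambda^l\|_1\le-\epsilon_0\|\lambda^l\|$ and $\|g(\widehat x)\|\le\nu_g$. No case split is needed.

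Your per-coordinate bound $[\lambda_i^l+\sigma q]_+^2\le(\lambda_i^l-\sigma\epsilon_0)^2$ also works, but summed over $i$ it produces $p\sigma^2\epsilon_0^2$. So you need $\nu_g^2\ge p\epsilon_0^2$, not merely $\nu_g\ge\epsilon_0$. This does hold, since $\|g(\widehat x)\|^2=\sum_i g_i(\widehat x)^2\ge p\epsilon_0^2$.
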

{\bf Proof}. Inequality (\ref{eq:6}) follows from Lemma \ref{lem:1}. We only need to establish (\ref{eq:7}). Since it is obvious that
$$
 \|\lambda^{t+s}\|-\|\lambda^t\|\leq s \sigma \gamma_1
$$
 when $\|\lambda^t\|< \vartheta (\sigma,\alpha,s,M)$, it remains to prove
$$  \|\lambda^{t+s}\|-\|\lambda^t\| \leq -s \displaystyle \frac{\sigma \epsilon_0}{2}
$$
when $\|\lambda^t\| \geq  \vartheta (\sigma,\alpha,s, M)$.

For given positive integer $s$, suppose $\|\lambda^t\| \geq  \vartheta (\sigma,\alpha,s, M)$. For any $l \in \{t,t+1,\ldots, t+s-1\}$, since from Assumption (B4) that ${\cal L}^l_{\sigma }(x,\lambda^l) +\displaystyle \frac{\alpha}{2}\|x-x^l\|^2$ is strongly convex with modulus $\displaystyle \frac{\alpha}{2}$, one has from Assumption (B2) that
$$
\begin{array}{l}
\langle \nabla f(x^l),x^{l+1}-x^l \rangle+\displaystyle \frac{1}{2}\left \langle \Sigma^l_0(x^{l+1}-x^l), x^{l+1}-x^l \right \rangle+\displaystyle \frac{1}{2\sigma}\|\lambda^{l+1}\|^2+\displaystyle \frac{\alpha}{2}\|x^{l+1}-x^l\|^2\\[10pt]
\quad \,\leq \langle \nabla f(x^l)),\widehat x-x^l \rangle+\displaystyle \frac{1}{2}\left \langle \Sigma^l_0(\widehat x-x^l), \widehat x-x^l \right \rangle+\displaystyle \frac{1}{2\sigma}\|[\lambda^{l}+\sigma q^l(\widehat x)]_{+}\|^2\\[10pt]
\quad \quad \quad +\displaystyle \frac{\alpha}{2}\left[\|\widehat x-x^l\|^2-\|\widehat x-x^{l+1}\|^2\right]\\[10pt]
\quad \,\leq \langle \nabla f(x^l),\widehat x-x^l \rangle+\displaystyle \frac{1}{2}\left \langle \Sigma^l_0(\widehat x-x^l), \widehat x-x^l \right \rangle+\displaystyle \frac{1}{2\sigma}\|[\lambda^{l}+\sigma g(\widehat x)]_{+}\|^2\\[10pt]
\quad \quad \quad +\displaystyle \frac{\alpha}{2}\left[\|\widehat x-x^l\|^2-\|\widehat x-x^{l+1}\|^2\right].
\end{array}
$$
Using Assumption (A2) and the following inequality
$$
\|[\lambda^{l}+\sigma g(\widehat x)]_{+}\|^2 \leq \|\lambda^l\|^2+2\sigma \langle \lambda^l,g(\widehat x)\rangle
+\sigma^2\|g(\widehat x)\|^2,
$$
we obtain from Assumptions (A2),(A3),(B3) and (B1) that
\begin{equation}\label{eq:8}
\begin{array}{l}
\displaystyle \frac{1}{2\sigma} \left[\|\lambda^{l+1}\|^2-\|\lambda^l\|^2\right]
\leq \langle \nabla f(x^l)),\widehat x-x^{l+1}\rangle +\displaystyle \frac{1}{2}\left \langle \Sigma^l_0(\widehat x-x^l), \widehat x-x^l \right \rangle\\[10pt]
\quad \quad \quad +\displaystyle \frac{1}{2\sigma}\left[\|[\lambda^{l}+\sigma g(\widehat x)]_{+}\|^2-\|\lambda^l\|^2\right]-\left[\displaystyle \frac{\alpha}{2}\|x^{l+1}-x^l\|^2+\displaystyle \frac{1}{2}\left \langle \Sigma^l_0(x^{l+1}-x^l), x^{l+1}-x^l \right \rangle\right]\\[8pt]
\quad \quad \quad \quad \quad \quad +\displaystyle \frac{\alpha}{2}\left[\|\widehat x-x^l\|^2-\|\widehat x-x^{l+1}\|^2\right]\\[10pt]
\leq \kappa_fD_0+\displaystyle \frac{1}{2}\kappa_{\Sigma}(M)D_0^2+ \langle \lambda^l,g(\widehat x)\rangle
+\displaystyle \frac{\sigma}{2}\|g(\widehat x)\|^2 -\displaystyle \frac{\alpha}{2}\|x^{l+1}-x^l\|^2+\displaystyle \frac{\alpha}{2}\left[\|\widehat x-x^l\|^2-\|\widehat x-x^{l+1}\|^2\right]\\[10pt]
\leq \kappa_fD_0+\displaystyle \frac{1}{2}\kappa_{\Sigma}(M)D_0^2+ \langle \lambda^l,g(\widehat x)\rangle
+\displaystyle \frac{\sigma}{2}\nu_g^2-\displaystyle \frac{\alpha}{2}\|x^{l+1}-x^l\|^2+\displaystyle \frac{\alpha}{2}\left[\|\widehat x-x^l\|^2-\|\widehat x-x^{l+1}\|^2\right] \\[10pt]
\leq \kappa_fD_0+\displaystyle \frac{1}{2}\kappa_{\Sigma}(M)D_0^2+ \langle \lambda^l,g(\widehat x)\rangle
+\displaystyle \frac{\sigma}{2}\nu_g^2 +\displaystyle \frac{\alpha}{2}\left[\|\widehat x-x^l\|^2-\|\widehat x-x^{l+1}\|^2\right].
\end{array}
\end{equation}
Making  a summation of (\ref{eq:8}) over $\{t,t+1, \ldots, t+s-1\}$, we obtain  that
\begin{equation}\label{eq:9}
\begin{array}{l}
\displaystyle \frac{1}{2\sigma} [\|\lambda^{t+s}\|^2-\|\lambda^t\|^2 ]
\leq \left[\kappa_f D_0+\displaystyle \frac{1}{2}\kappa_{\Sigma}(M)D_0^2\right] s + \displaystyle \frac{\sigma}{2}\nu_g^2s +\displaystyle \sum_{l=t}^{t+s-1} \langle \lambda^l,g(\widehat x)\rangle
\\[10pt]
\quad \quad +\displaystyle \frac{\alpha}{2}\left(\|\widehat x-x^t\|^2-\|\widehat x-x^{t+s}\|^2\right)\\[10pt]
\leq \left[\kappa_f D_0+\displaystyle \frac{1}{2}\kappa_{\Sigma}(M)D_0^2\right]s + \displaystyle \frac{\sigma}{2}\nu_g^2s -\epsilon_0\displaystyle \sum_{l=0}^{s-1}\|\lambda^{t+l}\| +\displaystyle \frac{\alpha}{2}\left(\|\widehat x-x^t\|^2-\|\widehat x-x^{t+s}\|^2\right)\\[10pt]
\leq \left[\kappa_f D_0+\displaystyle \frac{1}{2}\kappa_{\Sigma}(M)D_0^2\right] s + \displaystyle \frac{\sigma}{2}\nu_g^2s -\epsilon_0\displaystyle \sum_{l=0}^{s-1} (\|\lambda^{t}\|-\sigma\gamma_1l)
\\[10pt]
\quad \quad +\displaystyle \frac{\alpha}{2}\left(\|\widehat x-x^t\|^2-\|\widehat x-x^{t+s}\|^2\right) \quad (\mbox{from } \|\lambda^{t+1}\|\geq \|\lambda^t\|-\sigma \gamma_1)\\[8pt]
\leq \left[\kappa_f D_0+\displaystyle \frac{1}{2}\kappa_{\Sigma}(M)D_0^2\right] s + \displaystyle \frac{\sigma}{2}\nu_g^2s +\displaystyle \frac{\alpha}{2}\left(\|\widehat x-x^t\|^2-\|\widehat x-x^{t+s}\|^2\right) \\[8pt]
\quad \quad + \epsilon_0\sigma \gamma_1 \displaystyle \frac{s(s-1)}{2}
-\epsilon_0\displaystyle \sum_{l=0}^{s-1} \|\lambda^{t}\|.
\end{array}
\end{equation}
From (\ref{eq:9}), we get that
$$
\begin{array}{l}
\|\lambda^{t+s}\|^2\,|\leq
\|\lambda^t\|^2 +2 \sigma \left[\kappa_f D_0+\displaystyle \frac{1}{2}\kappa_{\Sigma}(M)D_0^2\right] s+\sigma^2\nu_g^2s+\alpha \sigma D_0^2
+\epsilon_0\sigma^2\gamma_1s(s-1)-2\epsilon_0\sigma s \|\lambda^{t}\|\\[10pt]
=(\|\lambda^t\|-\displaystyle \frac{\epsilon_0\sigma}{2}s)^2
-\displaystyle \frac{\epsilon_0^2\sigma^2}{4}s^2+\epsilon_0\sigma^2 \gamma_1s(s-1)\\[10pt]
\quad \quad + \alpha \sigma D_0^2+2\sigma \left[\kappa_f D_0+\displaystyle \frac{1}{2}\kappa_{\Sigma}(M)D_0^2\right]s+\sigma^2\nu_g^2s-\epsilon_0\sigma s \|\lambda^{t}\| \\[10pt]
\leq  (\|\lambda^t\|-\displaystyle \frac{\epsilon_0\sigma}{2}s)^2
-\displaystyle \frac{3\epsilon_0^2\sigma^2}{4}s^2\\[10pt]
+\left( \displaystyle \frac{\epsilon_0^2\sigma^2}{2}s^2+\epsilon_0\sigma^2 \gamma_1s(s-1)+ \alpha \sigma D_0^2+2\sigma \left[\kappa_f D_0+\displaystyle \frac{1}{2}\kappa_{\Sigma}(M)D_0^2\right] s+\sigma^2\nu_g^2s-\epsilon_0\sigma s \vartheta (\sigma,\alpha,s,M)\right)\\[10pt]
= (\|\lambda^t\|-\displaystyle \frac{\epsilon_0\sigma}{2}s)^2-\displaystyle \frac{3\epsilon_0^2\sigma^2}{4}s^2\leq \left(\|\lambda^t\|-\displaystyle \frac{\epsilon_0\sigma}{2}s\right)^2.
\end{array}
$$
Taking square root on both sides yields the result. The proof is completed.
\hfill $\Box$

In order to use Lemma \ref{lemal7}  to analyze properties of QPALM for Problem (\ref{eq:1}), we introduce the following notations. Let  $\theta=\vartheta (\sigma,\alpha,s, M)$, $\delta_{\max}=\sigma \gamma_1$ and $\zeta =\displaystyle \frac{\sigma}{2}\epsilon_0$, and $t_0=s$, and define
$$
\psi(\sigma,\alpha,s, M)=\theta +t_0 \delta_{\max}+t_0 \displaystyle \frac{4 \delta_{\max}^2}{\zeta}\log \left[ \displaystyle \frac{8 \delta_{\max}^2}{\zeta^2} \right].
$$
 Then $\psi(\sigma,\alpha,s, M)$ is expressed as
$$
\begin{array}{ll}
\psi(\sigma,\alpha,s, M)= &
\vartheta (\sigma,\alpha,s, M)+  \left[\gamma_1+\displaystyle \frac{8\gamma_1^2}{\epsilon_0}\log\displaystyle \frac{32\gamma_1^2}{\epsilon_0^2}\right] \sigma s\\[12pt]
&=\kappa_0(M)+\kappa_1\displaystyle \frac{\alpha}{s}+ \kappa_2 s+\kappa_3
 \sigma+\kappa_4 \sigma s
 \end{array}
$$
  where
\begin{equation}\label{eq:notations}
\begin{array}{l}
\kappa_0(M)=\displaystyle \frac{\left(2\kappa_fD_0+\kappa_{\Sigma}(M)D_0^2\right)}{\epsilon_0},\,\,
\kappa_1=\displaystyle \frac{D_0^2}{\epsilon_0},\,\,
\kappa_2=0,\\[10pt]
\kappa_3=\displaystyle \frac{ \nu_g^2}{\epsilon_0}-\gamma_1,\,\,
\kappa_4=\left[\gamma_1+\displaystyle \frac{\epsilon_0}{2}+\displaystyle \frac{8\gamma_1^2}{\epsilon_0}\log \displaystyle \frac{32\gamma_1^2}{\epsilon_0^2}\right].
\end{array}
\end{equation}
Based on the above two lemmas, we obtain the following conclusion.

\begin{corollary}\label{cor:ebound}
 Let Assumptions (A1) -- (A3) and (B3) -- (B4) be satisfied and let $s>0$ be an arbitrary integer. Assume that $||\lambda^l|| \le M$ for all relevant prior steps. Then, it holds that
\begin{equation}\label{eq:bd1}
\|\lambda^k\| \leq \psi (\sigma,\alpha,s, M)
\end{equation}
If we choose $\alpha=\eta T^{1/3},\sigma=T^{-2/3},s=T^{1/3}$ for some constant $\eta>0$, then
\begin{equation}\label{eq:psiv}
\psi(\sigma,\alpha,s, M)=\psi(T^{-2/3},T^{1/3},T^{2/3}, M)=\kappa_0(M)+\kappa_1\displaystyle \frac{\alpha}{s}+ \kappa_2 s+\kappa_3 \sigma+\kappa_4 \sigma s=\kappa_0(M)+\eta\kappa_1+\kappa_3 T^{-2/3}+\kappa_4.
\end{equation}
In this case, we obtain for $k=1,\ldots, T$,
\begin{equation}\label{eq:lamB}
\|\lambda^k\| \leq \gamma_3(\eta, M),
\end{equation}
where $\gamma_3(\eta, M)=\kappa_0(M)+\eta\kappa_1+\kappa_3 +\kappa_4$.
\end{corollary}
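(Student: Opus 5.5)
The plan is to apply Lemma \ref{lemal7} to the scalar sequence $Z(t)=\|\lambda^{t+1}\|$, with the shift absorbed so that $Z(0)=\|\lambda^1\|=0$ by Step 0. I take the parameters as in the notation preceding the corollary: $\theta=\vartheta(\sigma,\alpha,s,M)$, $\delta_{\max}=\sigma\gamma_1$, $\zeta=\sigma\epsilon_0/2$ and $t_0=s$.

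The hypotheses of Lemma \ref{lemal7} are checked in turn.
\begin{itemize}
\item The increment bound (\ref{eq:Y9a}) is exactly (\ref{eq:6}) of Lemma \ref{lemal5}, which in turn comes from (\ref{eq:2lambda}) of Lemma \ref{lem:1}.
\item The drift condition (\ref{eq:Y9}) is exactly (\ref{eq:7}). Here the standing assumption $\|\lambda^l\|\le M$ on the relevant window $\{t,\ldots,t+s-1\}$ is what licenses Lemma \ref{lemal5}, together with $\alpha>2\kappa_\Sigma(M)$.
\item The requirement $0<\zeta\le\delta_{\max}$, i.e.\ $\epsilon_0/2\le\gamma_1$, has to be verified separately. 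By (A4) and (A2), $\epsilon_0\le|g_i(\widehat x)|\le\nu_g\le\gamma_1$, so $\epsilon_0/2\le\gamma_1$ holds.
\end{itemize}
Lemma \ref{lemal7} then gives $\|\lambda^k\|\le\theta+t_0\delta_{\max}+t_0\frac{4\delta_{\max}^2}{\zeta}\log\frac{8\delta_{\max}^2}{\zeta^2}=\psi(\sigma,\alpha,s,M)$, which is (\ref{eq:bd1}).

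For the explicit form of $\psi$, substitute into (\ref{eq:theta9}). The term $t_0\,4\delta_{\max}^2/\zeta$ becomes $8\gamma_1^2\sigma s/\epsilon_0$, and the log argument becomes $32\gamma_1^2/\epsilon_0^2$. Collecting terms in $\sigma s$, $\sigma$, $\alpha/s$ and the constant part gives the $\kappa_i$ in (\ref{eq:notations}); the $-\gamma_1\sigma$ coming from $\gamma_1\sigma(s-1)$ is absorbed into $\kappa_3$.

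Next plug in $\alpha=\eta T^{1/3}$, $\sigma=T^{-2/3}$, $s=T^{1/3}$. Then $\alpha/s=\eta$, $\sigma s=T^{-1/3}\le1$, $\kappa_2=0$ and $\sigma=T^{-2/3}\le1$. This yields (\ref{eq:psiv}), possibly with the $\kappa_4$ term carrying a factor $T^{-1/3}$ that is bounded by $1$. Bounding all these $T$-dependent factors by $1$ gives $\gamma_3(\eta,M)$, where $\kappa_3$ may be replaced by $\max\{\kappa_3,0\}$ if it is negative. The bound applies to $k=1,\ldots,T$ since Lemma \ref{lemal7} holds for all $t$.

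The main obstacle is the circularity in $M$. Lemma \ref{lemal5} needs $\|\lambda^l\|\le M$ a priori, and the conclusion bounds $\|\lambda^k\|$ by a quantity depending on $M$. I would handle this as the statement does, by taking the bound on prior iterates as a hypothesis and noting that the argument of Lemma \ref{lemal7} only uses (\ref{eq:Y9}) at times already covered. A cleaner route would be an induction on $k$: choose $M=\gamma_3(\eta,M)$ as a fixed point. Such a fixed point exists because $\kappa_0(M)$ is affine in $M$ with slope $D_0^2C_\Sigma/\epsilon_0=\rho<1$ by (B5). With $M$ so chosen, the bound propagates from window to window. I would present the first version and remark on the second.
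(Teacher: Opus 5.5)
Your proposal follows the paper's own route. The paper obtains the corollary directly by feeding (\ref{eq:6})--(\ref{eq:7}) of Lemma~\ref{lemal5} into Lemma~\ref{lemal7} with $\theta=\vartheta$, $\delta_{\max}=\sigma\gamma_1$, $\zeta=\sigma\epsilon_0/2$, $t_0=s$. Your additional checks are all correct and fill in points the paper leaves implicit:
\begin{itemize}
\item $\zeta\le\delta_{\max}$ holds, via (A4) and (A2).
\item The $\kappa_4$ term really carries the factor $\sigma s=T^{-1/3}$, which (\ref{eq:psiv}) drops.
\item $\kappa_3$ may be negative, so replacing it by $\max\{\kappa_3,0\}$ is the safe choice.
\item Lemma~\ref{lemal7} only uses (\ref{eq:Y9}) at earlier times, so a bound on prior iterates suffices.
\item Your fixed-point resolution of the $M$-dependence via $\rho<1$ is exactly the paper's Lemma~\ref{lemma:induction_bound}.
\end{itemize}
One small correction: collecting terms does not reproduce $\kappa_4$ as given in (\ref{eq:notations}). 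The quantity $\gamma_1\sigma s$ enters twice, once from $\gamma_1\sigma(s-1)$ in (\ref{eq:theta9}) and once from $t_0\delta_{\max}$. So the $\sigma s$-coefficient of $\psi$ is $\kappa_4+\gamma_1$, and your final constant should use $\kappa_4+\gamma_1$ in place of $\kappa_4$.
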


\begin{lemma}\label{lemma:induction_bound}
Suppose Assumptions (A1)-(A5) and (B1)-(B5) hold. Let $\bar{\kappa}_0 = \frac{2\kappa_f D_0 + D_0^2}{\epsilon_0}$. We define a universal constant $M^* > 0$ independent of the total iteration $T$ as:
$$ M^* = \frac{\bar{\kappa}_0 + \eta \kappa_1 + 1}{1 - \rho}. $$
If the total number of iterations $T$ is chosen to be sufficiently large such that:
$$ \kappa_3 T^{-2/3} + \kappa_4 T^{-1/3} \le 1, $$
where $\kappa_3$ and $\kappa_4$ are the constants defined in (\ref{eq:notations}), then for all $1 \le t \le T$, the sequence generated by QPALM satisfies:
$$||\lambda^{t}|| \le M^* \quad \text{and} \quad ||\Sigma_{0}^{t}|| \le \kappa_{\Sigma}(M^*).$$
\end{lemma}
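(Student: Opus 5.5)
The plan is to argue by strong induction on $t$, using Corollary \ref{cor:ebound} with the specific bound $M=M^*$. The key point is that the bound $\psi$ depends on $M$ only through $\kappa_0(M)$. Moreover, $\kappa_0(M)$ is affine in $M$ with slope $D_0^2 C_\Sigma/\epsilon_0=\rho<1$ by (B5). A self-consistent (fixed-point) bound therefore exists.

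\textbf{Base case and the fixed point.} Since $\lambda^1=0$, we have $\|\lambda^1\|=0\le M^*$. Assumption (B3) then gives $\Sigma_0^1=I$, so $\|\Sigma_0^1\|=1\le\kappa_\Sigma(M^*)$. Next, using $\kappa_\Sigma(M)=1+C_\Sigma M$, write
\[
\kappa_0(M)=\frac{2\kappa_fD_0+D_0^2}{\epsilon_0}+\frac{D_0^2C_\Sigma}{\epsilon_0}M=\bar\kappa_0+\rho M .
\]
With $\alpha=\eta T^{1/3}$, $\sigma=T^{-2/3}$ and $s=T^{1/3}$ (rounded to an integer, with the rounding absorbed into constants), expression (\ref{eq:psiv}) gives
\[
\psi(\sigma,\alpha,s,M)=\bar\kappa_0+\rho M+\eta\kappa_1+\kappa_3T^{-2/3}+\kappa_4\sigma s .
\]
Under the hypothesis $\kappa_3T^{-2/3}+\kappa_4T^{-1/3}\le 1$ this is at most $\bar\kappa_0+\eta\kappa_1+1+\rho M$. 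Setting $M=M^*$, the identity $(1-\rho)M^*=\bar\kappa_0+\eta\kappa_1+1$ yields $\psi(\sigma,\alpha,s,M^*)\le M^*$. Here I would interpret the term $\kappa_4\sigma s$ as $\kappa_4 T^{-1/3}$, as the hypothesis suggests. I would also check that $\alpha>2\kappa_\Sigma(M^*)$, which holds for $T$ large, so that Lemma \ref{lemal5} applies.

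\textbf{Inductive step.} Assume $\|\lambda^l\|\le M^*$ for all $l\le t$. I want to show $\|\lambda^{t+1}\|\le M^*$. All hypotheses of Lemma \ref{lemal5} then hold with $M=M^*$ on every window $\{l,\ldots,l+s-1\}$ contained in $\{1,\ldots,t\}$. This gives the drift inequalities (\ref{eq:6})--(\ref{eq:7}) for $Z(l)=\|\lambda^l\|$, with $\theta=\vartheta(\sigma,\alpha,s,M^*)$, $\delta_{\max}=\sigma\gamma_1$ and $\zeta=\sigma\epsilon_0/2$. Applying Lemma \ref{lemal7}, in the form already packaged in Corollary \ref{cor:ebound}, gives $\|\lambda^{t+1}\|\le\psi(\sigma,\alpha,s,M^*)\le M^*$. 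The bound on $\Sigma_0^{t+1}$ then follows immediately from (B3): $\|\Sigma_0^{t+1}\|\le 1+C_\Sigma\|\lambda^{t+1}\|\le\kappa_\Sigma(M^*)$.

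\textbf{Main obstacle.} The delicate point is the causality of the window argument. Lemma \ref{lemal5} at index $l$ needs $\|\lambda^{l'}\|\le M^*$ on the whole window up to $l+s-1$, which may extend beyond $t$. I would handle this as follows. Lemma \ref{lemal7}'s conclusion at time $t+1$ only uses one-step increments, which by (\ref{eq:6}) need no $M$ bound, together with window drifts for windows ending at or before $t+1$. Each of those windows only involves iterates with index at most $t$, and these are controlled by the induction hypothesis. The cleanest route is to rerun Lemma \ref{lemal7}'s argument on the sequence truncated at time $t+1$: freeze $Z(l)=Z(t+1)$ for $l>t+1$, or equivalently note that its proof bounds $Z(t+1)$ using only past windows. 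I would also verify that $\vartheta$ in (\ref{eq:theta9}) contains the $\kappa_\Sigma(M)$ dependence exactly as reflected in $\kappa_0(M)$, so that the contraction factor really is $\rho$.
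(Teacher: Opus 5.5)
Your route is the paper's: induction on $t$, the affine dependence $\kappa_0(M)=\bar\kappa_0+\rho M$ with $\rho<1$ from (B5), and Lemmas \ref{lemal5} and \ref{lemal7} applied with $M=M^*$, so that $\psi(M^*)\le M^*$ closes the induction. On the causality of the windows you are more careful than the paper, which simply asserts that Lemma \ref{lemal7} applies ``seamlessly''. However, only your second justification is sound. The argument behind Lemma \ref{lemal7} bounds $e^{rZ(\tau)}$ through $e^{rZ(\tau-t_0)}$ and the single window $[\tau-t_0,\tau]$. So bounding $\|\lambda^{t+1}\|$ uses $\|\lambda^l\|\le M^*$ only for $l\le t$, as you say. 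Freezing $Z$ after $t+1$ and invoking Lemma \ref{lemal7} as a black box does not work. If $Z(t+1)\ge\theta$, every later window has zero drift instead of the required $-t_0\zeta$ in (\ref{eq:Y9}). Windows straddling $t+1$ are not controlled either.

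The step that fails as written is ``under $\kappa_3T^{-2/3}+\kappa_4T^{-1/3}\le 1$, $\psi\le\bar\kappa_0+\eta\kappa_1+1+\rho M$''. It fails identically in the paper, since both of you take (\ref{eq:psiv}) at face value. Expand $\psi=\vartheta+t_0\delta_{\max}+t_0\frac{4\delta_{\max}^2}{\zeta}\log\frac{8\delta_{\max}^2}{\zeta^2}$ with $\vartheta$ from (\ref{eq:theta9}), $t_0=s$, $\delta_{\max}=\sigma\gamma_1$ and $\zeta=\sigma\epsilon_0/2$. The coefficient of $\sigma s$ is then $\frac{\epsilon_0}{2}+2\gamma_1+\frac{8\gamma_1^2}{\epsilon_0}\log\frac{32\gamma_1^2}{\epsilon_0^2}=\kappa_4+\gamma_1$. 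Both the term $\gamma_1\sigma(s-1)$ inside $\vartheta$ and the term $t_0\delta_{\max}=\gamma_1\sigma s$ contribute, but (\ref{eq:notations}) keeps only one of them. With the stated threshold you only get $\|\lambda^{t+1}\|\le M^*+\gamma_1T^{-1/3}$. Since $M^*$ has no slack beyond the ``$+1$'', the induction does not close. The repair is to require $\kappa_3T^{-2/3}+(\kappa_4+\gamma_1)T^{-1/3}\le 1$. For the same reason, the rounding of $s=T^{1/3}$ cannot be ``absorbed into constants''. It must enter this threshold, or you must assume $T^{1/3}\in\mathbb{N}$, as the paper tacitly does. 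Finally, you need not impose $\alpha>2\kappa_\Sigma(M^*)$, which the stated hypothesis on $T$ does not guarantee anyway. The proof of Lemma \ref{lemal5} never uses it: $\Sigma_0^l\succeq I$ by (B1) and (B3), so the term $-\frac{\alpha}{2}\|x^{l+1}-x^l\|^2-\frac12\langle\Sigma_0^l(x^{l+1}-x^l),x^{l+1}-x^l\rangle$ is simply dropped.
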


{\bf Proof}.
We prove this by mathematical induction on $t$. Recall from (\ref{eq:notations}) and (\ref{eq:psiv}) that the theoretical drift bound $\psi$ over $s$ steps explicitly depends on $M$ through $\kappa_{\Sigma}(M)$. By separating the linear growth term, we have for any $M$:
\begin{align*}
\psi(M, T) &= \kappa_0(M) + \eta \kappa_1 + \kappa_3 T^{-2/3} + \kappa_4 T^{-1/3} \\
&= \frac{2\kappa_f D_0 + (1 + C_{\Sigma} M)D_0^2}{\epsilon_0} + \eta \kappa_1 + \kappa_3 T^{-2/3} + \kappa_4 T^{-1/3} \\
&= \bar{\kappa}_0 + \rho M + \eta \kappa_1 + \kappa_3 T^{-2/3} + \kappa_4 T^{-1/3}.
\end{align*}
We prove the result by induction. When $t=1$, since $\lambda^1 = 0$, we trivially have $\|\lambda^1\| = 0 \le M^*$. Consequently, $\Sigma_0^1 = I$, yielding $\|\Sigma_0^1\| = 1 \le \kappa_{\Sigma}(M^*)$.
 Assume that for a given step $t$ ($1 \le t < T$), the condition holds for all $l \in \{1, 2, \dots, t\}$: $\|\lambda^l\| \le M^*$ and $\|\Sigma_0^l\| \le \kappa_{\Sigma}(M^*)$. We need to show that $\lambda^{t+1}\|\leq M^*$. Since the uniform bound $M^*$ holds for all previous steps up to $t$, the strong convexity and the gradient Lipschitz conditions parametrized by $\kappa_{\Sigma}(M^*)$ remain valid. We can seamlessly apply the sequence drift bound from Lemma \ref{lemal7} along with Lemma \ref{lemal5} to obtain the bound for the next iteration:
$$ \|\lambda^{t+1}\| \le \psi(M^*, T). $$
Substituting the definition of $M^*$ and utilizing the assumption on sufficiently large $T$, we have:
\begin{align*}
\|\lambda^{t+1}\| &\le \bar{\kappa}_0 + \rho M^* + \eta \kappa_1 + \left( \kappa_3 T^{-2/3} + \kappa_4 T^{-1/3} \right) \\
&\le \bar{\kappa}_0 + \rho M^* + \eta \kappa_1 + 1 \\
&= \bar{\kappa}_0 + \eta \kappa_1 + 1 + \rho \frac{\bar{\kappa}_0 + \eta \kappa_1 + 1}{1 - \rho} \\
&= \frac{(\bar{\kappa}_0 + \eta \kappa_1 + 1)(1 - \rho) + \rho(\bar{\kappa}_0 + \eta \kappa_1 + 1)}{1 - \rho} \\
&= \frac{\bar{\kappa}_0 + \eta \kappa_1 + 1}{1 - \rho} = M^*.
\end{align*}
Therefore, $\|\lambda^{t+1}\| \le M^*$ holds. It directly follows that $\|\Sigma_0^{t+1}\| \le 1 + C_{\Sigma} M^* = \kappa_{\Sigma}(M^*)$.
By the principle of mathematical induction, the uniform bound holds for all $t \in \{1, \dots, T\}$, breaking the circular dependency between $\|\Sigma_0^t\|$ and $\|\lambda^t\|$.
\hfill $\Box$

\section{Iteration Complexities of QPALM}\label{Sec3}
\setcounter{equation}{0}
Based on Lemma \ref{lemma:induction_bound}, for a sufficiently large $T$, the multiplier sequence generated by QPALM is uniformly bounded by $M^*$ for all $1 \le t \le T$. Consequently, the matrix sequence $\Sigma_0^t$ is universally bounded by $\kappa_\Sigma(M^*)$. In the following non-asymptotic analysis, to maintain consistent and simplified notations, we will drop the explicit $M^*$ dependency and define the fixed constants: $\kappa_\Sigma := \kappa_\Sigma(M^*)$, $\gamma_1 := \gamma_1$, $\gamma_2 := \gamma_2$, $\gamma_3 := \gamma_3(\eta, M^*)$, $\gamma_4 := \gamma_4(M^*)$, $\gamma_5 := \gamma_5(M^*)$, $\gamma_6 := \gamma_6(M^*)$, and $\widehat\beta_k(\sigma) := \widehat\beta_k(\sigma, M^*)$. This convention effectively decouples the parameter interdependency and validates the strong convexity and Lipschitz constants used in the subsequent non-asymptotic analysis.

Since Problem (\ref{eq:1}) is nonconvex, we can only discuss how the sequence generated by QPALM satisfies the Karush-Kuhn-Tucker (KKT) conditions of Problem (\ref{eq:1}). We will establish the complexities of QPALM  for finding an $\varepsilon$-approximate KKT point of Problem (\ref{eq:1}). Define
\begin{equation}\label{eq:bk}
\beta_k( \sigma)=L_0+\left(\displaystyle \sum_{j=1}^pL_j\gamma_2\right)k \sigma.
\end{equation}

\begin{lemma}\label{eq:bka}
Let Assumptions (A1)--(A3),(A5), (B1) and (B2) be satisfied. Then $x \rightarrow L(x,\lambda^k)$ is $\beta_k(\sigma)$-weakly convex.
\end{lemma}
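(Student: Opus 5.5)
The plan is to write $L(x,\lambda^k)=f(x)+\sum_{j=1}^p\lambda^k_j g_j(x)$ and use two facts: weak convexity moduli add under nonnegative combinations, and each multiplier $\lambda^k_j$ can be bounded linearly in $k\sigma$ by the increment estimate of Lemma \ref{lem:1}. Recall that a function $h$ is $L$-weakly convex when $h+\frac{L}{2}\|\cdot\|^2$ is convex.

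\textbf{Step 1 (combination of weakly convex functions).} By (A5), $f+\frac{L_0}{2}\|\cdot\|^2$ and $g_j+\frac{L_j}{2}\|\cdot\|^2$ are convex on ${\cal O}_0$. Every $\lambda^k_j\ge 0$: this holds for $k=1$ since $\lambda^1=0$, and for $k\ge2$ because each multiplier is a projection onto $\mathbb R_+$. Hence
$$
L(x,\lambda^k)+\frac{1}{2}\Big(L_0+\sum_{j=1}^p\lambda^k_jL_j\Big)\|x\|^2=\Big(f+\frac{L_0}{2}\|\cdot\|^2\Big)+\sum_{j=1}^p\lambda^k_j\Big(g_j+\frac{L_j}{2}\|\cdot\|^2\Big)
$$
is a nonnegative combination of convex functions, so it is convex. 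Therefore $L(\cdot,\lambda^k)$ is $\big(L_0+\sum_j\lambda^k_jL_j\big)$-weakly convex. Since weak convexity with modulus $L$ implies weak convexity with any larger modulus, it suffices to show $\lambda^k_j\le \gamma_2 k\sigma$ for every $j$.

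\textbf{Step 2 (multiplier bound).} Inequality (\ref{eq:lambdainorm}) of Lemma \ref{lem:1} gives $|\lambda^{l+1}_j-\lambda^l_j|\le\gamma_2\sigma$. It uses (A2), (A3), (B1), and the bound $\|x^{l+1}-x^l\|\le D_0$. That last bound needs $x^l\in X$, which holds for $l\ge2$ since every iterate is an argmin over $X$. Telescoping from $\lambda^1=0$ gives
$$
\lambda^k_j\le\sum_{l=1}^{k-1}|\lambda^{l+1}_j-\lambda^l_j|\le (k-1)\gamma_2\sigma\le k\gamma_2\sigma .
$$
Substituting this into Step 1 gives the modulus $L_0+\big(\sum_j L_j\gamma_2\big)k\sigma=\beta_k(\sigma)$.

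\textbf{Main obstacle.} The argument has no real difficulty; the points needing care are bookkeeping. First, the increment bound relies on $x^l\in X$. For $l=1$ the starting point $x^1\in\mathbb R^n$ may lie outside $X$, so I would assume $x^1\in X$ or treat this as the implicit convention. Second, nonnegativity of $\lambda^k$ is essential: otherwise the weighted sum of convex functions need not be convex. (B2) is not needed for the argument itself; it enters only through Lemma \ref{lem:1}'s hypotheses.
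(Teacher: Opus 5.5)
Your proposal is correct and follows essentially the same route as the paper. The paper also gets the modulus $L_0+\sum_j\lambda^k_jL_j$ from (A5) and $\lambda^k\ge 0$, then bounds $\lambda^k_j\le\gamma_2k\sigma$ using the increment estimate (\ref{eq:lambdainorm}) and $\lambda^1=0$. Your extra remark is fair: the $D_0$ bound at the first step tacitly needs $x^1\in X$, which the paper leaves implicit.
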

{\bf Proof}.
Since (A5) holds,  $\lambda\geq 0$,
 $$ x \rightarrow L(x,\lambda^k)=f(x)+\displaystyle \sum_{j=1}^p \lambda^k_jg_j(x)$$
 is $L_0+\displaystyle \sum_{j=1}^p \lambda^k_jL_j$-weakly convex. In view of (\ref{eq:lambdainorm}) and $\lambda^1=0$, we have $\lambda^k_j \leq \gamma_2 k \sigma$. This observation yields the result.
\hfill $\Box$

 \begin{lemma}\label{lemaugP}
 For $\lambda\geq 0$ and $\sigma >0$, one has that $x \rightarrow {\cal L}^k_{\sigma}(x,\lambda)$ is positive definite  if $\Sigma_0^k+\displaystyle \sum_{j=1}^p\lambda_j \Sigma^k_j$ is positively (semi-definite) definite.
 \end{lemma}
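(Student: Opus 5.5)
The claim is that $x\mapsto {\cal L}^k_\sigma(x,\lambda)$ is convex (strongly convex in the definite case). I will verify this term by term from the definition (\ref{augL}), arguing by second-order comparison with a quadratic. The term $-\|\lambda\|^2/(2\sigma)$ does not depend on $x$ and can be ignored. The objective part $q^k_0$ is a quadratic with Hessian $\Sigma^k_0$. The key is therefore the penalty terms $\phi_i(x):=\frac{1}{2\sigma}[\lambda_i+\sigma q^k_i(x)]_+^2$.

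\textbf{Step 1: a quadratic lower bound on each penalty term.} The function $u\mapsto \frac{1}{2\sigma}[u]_+^2$ is convex, nondecreasing and $C^1$, with derivative $[u]_+/\sigma$. Hence $\phi_i$ is $C^1$ with
\[
\nabla\phi_i(x)=[\lambda_i+\sigma q^k_i(x)]_+\,\nabla q^k_i(x).
\]
Fix two points $x,y$. Set $u=\lambda_i+\sigma q^k_i(x)$ and $v=\lambda_i+\sigma q^k_i(y)$. Convexity of the scalar function gives
\[
\phi_i(y)-\phi_i(x)\ \ge\ \tfrac{1}{\sigma}[u]_+(v-u).
\]
Expanding the quadratic $q^k_i$ exactly yields
\[
v-u=\sigma\big(\langle\nabla q^k_i(x),y-x\rangle+\tfrac12\langle\Sigma^k_i(y-x),y-x\rangle\big).
\]
Combining the two, with $\mu_i(x):=[\lambda_i+\sigma q^k_i(x)]_+\ge 0$,
\[
\phi_i(y)\ \ge\ \phi_i(x)+\langle\nabla\phi_i(x),y-x\rangle+\tfrac12\mu_i(x)\langle\Sigma^k_i(y-x),y-x\rangle .
\]

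\textbf{Step 2: sum the bounds.} Adding the exact expansion of $q^k_0$ to the bounds from Step 1 gives
\[
{\cal L}^k_\sigma(y,\lambda)\ge{\cal L}^k_\sigma(x,\lambda)+\langle\nabla_x{\cal L}^k_\sigma(x,\lambda),y-x\rangle+\tfrac12\Big\langle\big(\Sigma^k_0+\textstyle\sum_j\mu_j(x)\Sigma^k_j\big)(y-x),y-x\Big\rangle .
\]
So it remains to show that the matrix $\Sigma^k_0+\sum_j\mu_j(x)\Sigma^k_j$ is positive (semi)definite.

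\textbf{Step 3: control the multipliers $\mu_j(x)$ — the main obstacle.} The hypothesis concerns $\lambda_j$, not $\mu_j(x)$, so the two must be related. Under (B1) each $\Sigma^k_j\preceq0$. The needed comparison is $\mu_j(x)\le\lambda_j$, which holds exactly when $q^k_j(x)\le 0$, i.e.\ on the linearized-feasible region. When it holds, $\sum_j\mu_j\Sigma^k_j\succeq\sum_j\lambda_j\Sigma^k_j$, and the hypothesis gives the required definiteness.

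Globally, however, $\mu_j(x)$ may exceed $\lambda_j$, and then negative curvature could enter. To close this gap I would proceed in two ways:
\begin{itemize}
\item read the lemma as a statement about the relevant region, namely points where $q^k_j\le0$, together with the paper's construction (B3) $\Sigma^k_0=-\sum_j\lambda^k_j\Sigma^k_j+I$, where the identity term supplies slack;
\item alternatively, if $\Sigma^k_j=0$ (the linear constraint case), the claim is immediate.
\end{itemize}
I would state this caveat explicitly. I would then conclude convexity via the first-order characterization above, and strong convexity when the matrix is definite with a uniform lower eigenvalue.
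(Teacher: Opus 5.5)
Your Step 3 obstruction cannot be handled by a caveat, because the lemma is false as stated. No argument from the hypothesis on $\lambda$ alone can succeed.

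\textbf{Counterexample.} Take $n=p=1$, $\sigma=\lambda=1$, $x^k=0$, $q^k_0(x)=\frac12x^2$ and $q^k_1(x)=1-\frac12x^2$. Then $\Sigma^k_0+\lambda\Sigma^k_1=1-1=0\succeq0$. But for $|x|<2$ one gets ${\cal L}^k_\sigma(x,\lambda)=\frac32-\frac12x^2+\frac18x^4$, whose second derivative at $x=0$ is $-1$.
\begin{itemize}
\item With $\Sigma^k_0=\frac32$ the hypothesis becomes definite, and the second derivative at $0$ is still $-\frac12$.
\item The choice (B3), $\Sigma^k_0=-\lambda\Sigma^k_1+I=2$, also fails once $\sigma=2$: near $0$, ${\cal L}^k_\sigma(x,\lambda)=2-\frac12x^2+\frac14x^4$.
\end{itemize}

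\textbf{Where the paper's proof fails.} The paper's Schur-complement argument breaks at exactly your point. Its $\nabla^2_{xx}\widehat\phi$ omits the term $\sigma\sum_j(q^k_j(x)-y_j)\Sigma^k_j$, which comes from the curvature of $q^k$ inside $\frac{\sigma}{2}\|q^k(x)-y\|^2$. The correct Schur complement is $\Sigma^k_0+\sum_j[\lambda_j+\sigma(q^k_j(x)-y_j)]\Sigma^k_j$. At the minimizing $y$ the bracket equals your $\mu_j(x)$. Since the bracket is unbounded as $y_j\to-\infty$, $\widehat\phi$ is not even jointly convex on $\{y\le0\}$ when some $\Sigma^k_j\preceq0$ is nonzero. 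So your Steps 1--2 are correct and isolate the right matrix, and the hypothesis does not control it. Your linear-constraint remark is correct but covers only the trivial case.

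\textbf{Your first rescue does not work.} For concave $q^k_j$ the set $\{q^k_j\le0\}$ is generally nonconvex; in the example it is $|x|\ge\sqrt2$. Tangent-plane inequalities at base points of a nonconvex set say nothing about segments that cross its complement. Moreover, the subproblem is posed on $X$, which typically contains points with $q^k_j>0$, e.g.\ $x^k$ itself when $g_j(x^k)>0$, since $q^k_j(x^k)=g_j(x^k)$.

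\textbf{Your second rescue is the right idea but incomplete.} ``The identity term supplies slack'' needs a quantitative bound and a smallness condition on $\sigma$; the $\sigma=2$ example shows the latter cannot be dropped. For $x\in{\cal O}_0$, (B2) and (A2) give $\mu_j(x)-\lambda_j\le\sigma[q^k_j(x)]_+\le\sigma[g_j(x)]_+$. By (B1), the indices with $\mu_j(x)\le\lambda_j$ contribute only positive semidefinite terms $(\mu_j-\lambda_j)\Sigma^k_j$. Hence, if $\Sigma^k_0+\sum_j\lambda_j\Sigma^k_j\succeq cI$ (with $c=1$ under (B3) at $\lambda=\lambda^k$), then
\[
\Sigma^k_0+\sum_{j=1}^p\mu_j(x)\Sigma^k_j\succeq\Big(c-\sigma\kappa_H\sum_{j=1}^p[g_j(x)]_+\Big)I\succeq\big(c-\sigma\nu_gC_\Sigma\big)I .
\]
Inserting this into your Step 2 inequality for $x,y\in{\cal O}_0$ gives $(c-\sigma\nu_gC_\Sigma)$-strong convexity on ${\cal O}_0\supset X$ whenever $\sigma\nu_gC_\Sigma<c$. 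This holds for $\sigma=T^{-2/3}$ once $T$ is large. That is a corrected lemma with additional hypotheses, and you should present it as such rather than as a proof of the statement.
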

 {\bf Proof}.
 Noting that
 $$
 {\cal L}^k_{\sigma}(x,\lambda)=\min_{y \leq 0} \widehat \phi (x,y,\lambda)=q^k_0(x)+\langle \lambda, q^k(x)-y\rangle
 +\displaystyle \frac{\sigma}{2}\|q^k(x)-y\|^2,
 $$
 we only need to prove $(x,y) \rightarrow \widehat \phi (x,y,\lambda)$ is a convex function. It is easy to obtain
 $$
 \nabla_{x,y}^2 \widehat \phi (x,y,\lambda)=\left[
 \begin{array}{cc}
 \Sigma^k_0+\displaystyle \sum_{j=1}^p \lambda_j \Sigma^k_j+\sigma {\cal J}q^k(x)^T{\cal J}q^k(x) & -\sigma {\cal J}q^k(x)^T\\[6pt]
 -\sigma {\cal J} q^k(x) & \sigma I
 \end{array}
 \right].
 $$
 Since the Schur complement matrix of $\sigma I$ in $\nabla_{x,y}^2 \widehat \phi (x,y,\lambda)$ is
 $$
 \nabla_{x,y}^2 \widehat \phi (x,y,\lambda)/(\sigma I)=\Sigma^k_0+\displaystyle \sum_{j=1}^p \lambda_j \Sigma^k_j,
 $$
 we obtain that $\nabla_{x,y}^2 \widehat \phi (x,y,\lambda)$ is positively (semi-definite) definite if and only if $\Sigma^k_0+\displaystyle \sum_{j=1}^p \lambda_j \Sigma^k_j$ is positively (semi-definite) definite.
 \hfill $\Box$

 \begin{proposition}\label{corPosiSig}
 If (A1)-(A3), (A5) hold, $\Sigma^k_i \prec -L_i I$ for $i=1,\ldots p$, and $\Sigma^k_0 = -\sum_{j=1}^p \lambda_j^k \Sigma_j^k + I \succ 0$, then Assumptions (B1) -- (B4) hold.
 \end{proposition}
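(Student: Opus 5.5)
We verify (B1)--(B4) one at a time; each is a short consequence of the hypotheses, except that (B1) needs a norm bound we supply by choosing $\Sigma^k_i$ appropriately.

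\textbf{(B2).} Fix $i$. By (A5), $g_i$ is $L_i$-weakly convex, so $g_i+\frac{L_i}{2}\|\cdot\|^2$ is convex and differentiable on ${\cal O}_0$. The gradient inequality for this convex function at $x^k$ gives
\begin{equation*}
g_i(x)\ge g_i(x^k)+\langle \nabla_x g_i(x^k),x-x^k\rangle-\frac{L_i}{2}\|x-x^k\|^2 .
\end{equation*}
Since $\Sigma^k_i\prec -L_iI$, we have $\frac12\langle\Sigma^k_i(x-x^k),x-x^k\rangle\le -\frac{L_i}{2}\|x-x^k\|^2$. Hence $q^k_i(x)\le g_i(x)$ on ${\cal O}_0$, which is (B2).

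\textbf{(B1).} The hypothesis $\Sigma^k_i\prec -L_iI$ makes each $\Sigma^k_i$ negative definite, hence negative semidefinite. Boundedness $\|\Sigma^k_i\|\le\kappa_H$ is not forced by the hypothesis. We therefore make the natural choice $\Sigma^k_i=-(L_i+\delta)I$ for a fixed $\delta>0$, or more generally any family that stays uniformly bounded. This gives $\kappa_H=\max_i L_i+\delta$, and $C_\Sigma=\sqrt p\,\kappa_H$ follows.

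\textbf{(B3).} The update rule for $\Sigma_0^k$ is exactly the assumed formula $\Sigma^k_0=-\sum_j\lambda^k_j\Sigma^k_j+I$. The triangle inequality gives
\begin{equation*}
\|\Sigma^k_0\|\le 1+\sum_j\lambda^k_j\kappa_H\le 1+\sqrt p\,\kappa_H\|\lambda^k\|,
\end{equation*}
where the last step uses $\lambda^k\ge0$, $\lambda^k_j\le\|\lambda^k\|$ summed over $j$, and Cauchy--Schwarz. Thus $\|\Sigma^k_0\|\le\kappa_\Sigma(M)=1+C_\Sigma M$ whenever $\|\lambda^k\|\le M$. Positive definiteness of $\Sigma^k_0$ holds automatically: each $-\lambda^k_j\Sigma^k_j$ is positive semidefinite, so $\Sigma^k_0\succeq I$.

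\textbf{(B4).} Apply Lemma \ref{lemaugP} with $\lambda=\lambda^k\ge0$; nonnegativity holds because every multiplier update is a projection onto $\mathbb R^p_+$, and $\lambda^1=0$. The required matrix is
\begin{equation*}
\Sigma^k_0+\sum_j\lambda^k_j\Sigma^k_j=I\succ0,
\end{equation*}
so $x\mapsto{\cal L}^k_\sigma(x,\lambda^k)$ is convex. In fact it is strongly convex, via the Schur-complement argument in the proof of that lemma.

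\textbf{Main obstacle.} The argument above establishes the pointwise Hessian condition. Lemma \ref{lemaugP} then delivers convexity of ${\cal L}^k_\sigma(\cdot,\lambda^k)$, because the partial minimization over $y\le0$ of a jointly convex function is convex. We rely on that fact as already established in the proof of Lemma \ref{lemaugP}.
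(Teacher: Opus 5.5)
Your arguments for (B1)--(B3) are correct. Your remark on (B1) is also right: $\Sigma^k_i\prec -L_iI$ does not give a uniform $\kappa_H$, so a bounded choice such as $\Sigma^k_i=-(L_i+\delta)I$ has to be imposed. The paper gives no separate proof of this proposition, and the surrounding text indicates (B4) is meant to follow from Lemma~\ref{lemaugP} exactly as you use it.

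That step fails, because Lemma~\ref{lemaugP} is false as stated. Its Hessian of $\widehat\phi$ drops the curvature of the penalty term: $\nabla^2_x\frac{\sigma}{2}\|q^k(x)-y\|^2=\sigma{\cal J}q^k(x)^T{\cal J}q^k(x)+\sigma\sum_j(q^k_j(x)-y_j)\Sigma^k_j$. The correct Schur complement is therefore $\Sigma^k_0+\sum_j(\lambda_j+\sigma(q^k_j(x)-y_j))\Sigma^k_j$. This is indefinite once $-y$ is large, so $\widehat\phi$ is not jointly convex on $\mathbb R^n\times\mathbb R^p_-$, and the partial-minimization argument yields nothing.

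Differentiating ${\cal L}^k_\sigma$ directly, with $A(x)=\{j:\lambda^k_j+\sigma q^k_j(x)>0\}$, gives
\[
\nabla^2_x{\cal L}^k_\sigma(x,\lambda^k)=\Sigma^k_0+\sum_{j\in A(x)}\Big[(\lambda^k_j+\sigma q^k_j(x))\Sigma^k_j+\sigma\nabla q^k_j(x)\nabla q^k_j(x)^T\Big].
\]
So the weights on $\Sigma^k_j$ are $[\lambda^k_j+\sigma q^k_j(x)]_+$, not $\lambda^k_j$, and they exceed $\lambda^k_j$ exactly where $q^k_j(x)>0$.

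Here is a concrete counterexample. Take $n=p=1$, $X=[-1,1]$, $g_1(x)=c-\frac{L_1}{2}x^2$ with $c>0$, $x^k=0$, $\lambda^k=0$, and $\Sigma^k_1=-2L_1$, so $\Sigma^k_0=1$. Then ${\cal L}^k_\sigma(x,0)=q^k_0(x)+\frac{\sigma}{2}[c-L_1x^2]_+^2$ has second derivative $1-2\sigma cL_1$ at $x=0$. This is negative once $\sigma cL_1>\frac12$, although every hypothesis of the proposition holds. The same objection applies to your claim of strong convexity.

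So (B4) needs an extra smallness condition on $\sigma$, which your proof (and the proposition) must add. Substituting $\Sigma^k_0=I-\sum_j\lambda^k_j\Sigma^k_j$ into the formula above gives
\[
\nabla^2_x{\cal L}^k_\sigma(x,\lambda^k)=I-\sum_{j\notin A(x)}\lambda^k_j\Sigma^k_j+\sigma\sum_{j\in A(x)}\Big[q^k_j(x)\Sigma^k_j+\nabla q^k_j(x)\nabla q^k_j(x)^T\Big]\succeq\Big(1-\sigma\kappa_H\sum_{j=1}^p[q^k_j(x)]_+\Big)I .
\]
For $x,x^k\in X$, since $\Sigma^k_j\preceq0$, we have $q^k_j(x)\le g_j(x^k)+\langle\nabla g_j(x^k),x-x^k\rangle\le\nu_g+\kappa_gD_0$. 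Hence if $\sigma p\kappa_H(\nu_g+\kappa_gD_0)<1$, then ${\cal L}^k_\sigma(\cdot,\lambda^k)$ is strongly convex on $X$. To see this, note it is $C^1$ with piecewise polynomial gradient, so you can check monotonicity of the gradient along segments in $X$. Convexity on $X$ is all that Lemmas~\ref{lem:opt-x} and~\ref{lemal5} actually use. With $\sigma=T^{-2/3}$, the condition is just one more lower bound on $T$.
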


\begin{lemma}\label{lempls}
For any vectors $a,b \in \mathbb R^p$, one has
\begin{equation}\label{plsexp}
\|[b]_+\|^2-\|[a]_+\|^2\leq 2 \langle [a]_+, b-a\rangle+\|b-a\|^2.
\end{equation}
\end{lemma}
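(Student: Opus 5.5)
The inequality decouples across coordinates: both sides are sums over $i=1,\ldots,p$, since $\|[b]_+\|^2=\sum_i [b_i]_+^2$, $\langle [a]_+,b-a\rangle=\sum_i [a_i]_+(b_i-a_i)$ and $\|b-a\|^2=\sum_i(b_i-a_i)^2$. So I will prove the scalar inequality
\[
[\beta]_+^2-[\alpha]_+^2\leq 2[\alpha]_+(\beta-\alpha)+(\beta-\alpha)^2,\qquad \alpha,\beta\in\mathbb R,
\]
and then sum it over the coordinates.

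I will write $\phi(u)=[u]_+^2$. This function is convex and continuously differentiable, with $\phi'(u)=2[u]_+$, and $\phi'$ is Lipschitz with constant $2$ because $u\mapsto[u]_+$ is nonexpansive. The descent lemma for a function with $2$-Lipschitz derivative then gives
\[
\phi(\beta)\leq \phi(\alpha)+\phi'(\alpha)(\beta-\alpha)+\tfrac{2}{2}(\beta-\alpha)^2,
\]
and this is exactly the scalar inequality.

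Alternatively, I can check the scalar inequality directly by cases.
\begin{itemize}
\item If $\alpha\geq 0$, the right-hand side equals $\beta^2-\alpha^2$, and the claim follows from $[\beta]_+^2\leq\beta^2$.
\item If $\alpha<0$, the right-hand side is $(\beta-\alpha)^2$ and the left-hand side is $[\beta]_+^2$. When $\beta\leq 0$ the left-hand side is $0$. When $\beta>0$ we have $\beta\leq\beta-\alpha$, so $[\beta]_+^2=\beta^2\leq(\beta-\alpha)^2$.
\end{itemize}

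I expect no real obstacle here. The only point needing care is that the linear term uses $[a]_+$ rather than $a$, which is precisely the derivative of $\tfrac12\|[\cdot]_+\|^2$. Summing the scalar inequalities over $i$ yields (\ref{plsexp}).
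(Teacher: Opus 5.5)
Your proof is correct and follows the same route as the paper: reduce coordinatewise to the scalar inequality $[b_i]_+^2-[a_i]_+^2\leq 2[a_i]_+(b_i-a_i)+(b_i-a_i)^2$ and sum over $i$. The paper only asserts that scalar inequality, whereas you actually justify it, both by the descent lemma for $u\mapsto[u]_+^2$ (whose derivative $2[u]_+$ is $2$-Lipschitz) and by the direct case check.
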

{\bf Proof}.
The result comes from the following inequality
$$
[b_i]_+^2-[a_i]_+^2\leq 2 \langle [a_i]_+, b_i-a_i\rangle+|b_i-a_i|^2, \, i=1 \ldots, p.
$$
The proof is completed.
\hfill $\Box$\\
Define
$$
\Delta^k(z)=\left[
\begin{array}{c}
\nabla g_1(x^k)^T(z-x^k)+\displaystyle \frac{1}{2}(z-x^k)^T\Sigma^1_i(z-x^k)\\[2pt]
\vdots\\[2pt]
\nabla g_p(x^k)^T(z-x^k)+\displaystyle \frac{1}{2}(z-x^k)^T\Sigma^p_i(z-x^k)
\end{array}
\right]\mbox{  and  }\Delta_{\Sigma}^k(z)=\left[
\begin{array}{c}
\displaystyle \frac{1}{2}(z-x^k)^T\Sigma^1_i(z-x^k)\\[2pt]
\vdots\\[2pt]
\displaystyle \frac{1}{2}(z-x^k)^T\Sigma^p_i(z-x^k)
\end{array}
\right].
$$
From Lemma \ref{lempls}, we have
\begin{equation}\label{eqnormminus}
\begin{array}{l}
\|[\lambda^k+\sigma q^k(x)]_+\|^2-\|[\lambda^k+\sigma g(x^k)]_+\|^2\\[6pt]
\quad \quad \leq 2\sigma[\lambda^k+\sigma g(x^k)]_+^T({\cal J}g(x^k)(x-x^k)+\Delta_{\Sigma}^k(z))+\sigma^2
\|\Delta^k(z))\|^2.
\end{array}
\end{equation}
Define
\begin{equation}\label{eqga4}
\gamma_4=
\left\{\nu_g\gamma_1
+
\gamma_1[p^{1/2}\kappa_gD_0+p^{1/2}D_0^2\kappa_{H}]+\displaystyle \frac{1}{2}[p^{1/2}\kappa_gD_0+p^{1/2}D_0^2\kappa_{H}]^2\right\}.
\end{equation}
and
\begin{equation}\label{betahatdef}
\widehat \beta_k(\sigma)=\beta_k(\sigma)+\displaystyle\frac{1}{2}\kappa_{\Sigma}+\displaystyle \frac{\sigma}{2}\left[2\nu_gp^{1/2}\kappa_{H}+2p\kappa_g^2+\displaystyle \frac{1}{2}p\kappa_{H}^2D_0^2  \right]+p^{1/2}\kappa_{H}\gamma_1k\sigma.
\end{equation}

\begin{theorem}\label{th:L-est}
Let Assumptions (A1) -- (A3), (A5) and (B1) -- (B4) be satisfied. Suppose
\begin{equation}\label{eqal1}
\alpha> 16 \hat \beta_k(\sigma),
\end{equation}
Then
\begin{equation}\label{eqfinalesti}
\begin{array}{l}
 \|\nabla e_{\alpha^{-1}\psi^k}(x^k)\|^2
\leq 16 \alpha \left[ e_{\alpha^{-1}\psi^k}(x^k)-e_{\alpha^{-1}\psi^{k+1}}(x^{k+1})+
\gamma_4 \sigma \right].
\end{array}
\end{equation}
\end{theorem}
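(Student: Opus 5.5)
Write $\psi^k(x):=L(x,\lambda^k)+\delta_X(x)$ (my reading of the notation), so that $e_{\alpha^{-1}\psi^k}(x)=\min_z\{\psi^k(z)+\frac{\alpha}{2}\|z-x\|^2\}$. The plan follows the Davis--Drusvyatskiy Moreau-envelope argument for model-based methods. Set $\widehat x^k:=\mathrm{prox}_{\alpha^{-1}\psi^k}(x^k)$. By Lemma \ref{eq:bka}, $L(\cdot,\lambda^k)$ is $\beta_k(\sigma)$-weakly convex. Since $\alpha>\beta_k(\sigma)$, the proximal point is unique and
\[
\nabla e_{\alpha^{-1}\psi^k}(x^k)=\alpha(x^k-\widehat x^k).
\]
It therefore suffices to prove a one-step descent inequality of the form
\[
e_{\alpha^{-1}\psi^{k+1}}(x^{k+1})\le e_{\alpha^{-1}\psi^k}(x^k)-\tfrac{\alpha}{16}\|x^k-\widehat x^k\|^2+\gamma_4\sigma .
\]

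\textbf{Step 1 (comparison with $\widehat x^k$).} Apply Lemma \ref{lem:opt-x} with $z=\widehat x^k\in X$. The subproblem objective is strongly convex with modulus at least $\alpha$, so this gives an upper bound on the model value at $x^{k+1}$ by the model value at $\widehat x^k$, plus $\frac{\alpha}{2}(\|\widehat x^k-x^k\|^2-\|\widehat x^k-x^{k+1}\|^2)$.

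\textbf{Step 2 (model versus true Lagrangian).} Replace the quadratic models by the true functions.
\begin{itemize}
\item Model of $f$ at $\widehat x^k$: use $L_0$-weak convexity to compare $f(x^k)+\langle\nabla f(x^k),\widehat x^k-x^k\rangle$ with $f(\widehat x^k)$, and bound the $\Sigma_0^k$ term by $\frac12\kappa_\Sigma\|\widehat x^k-x^k\|^2$.
\item Penalty term at $\widehat x^k$: expand it with \eqref{eqnormminus}. The linear part $\sigma[\lambda^k+\sigma g(x^k)]_+^T(\mathcal Jg(x^k)(\widehat x^k-x^k)+\Delta^k_\Sigma)$ is bounded above via weak convexity of each $g_j$ by $\sigma\langle\lambda^k,g(\widehat x^k)-g(x^k)\rangle$ plus curvature terms. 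This uses $\lambda_j^k\le\gamma_2k\sigma$, the bound $\Sigma_i^k\preceq 0$, and $\|\Sigma_i^k\|\le\kappa_H$. The quadratic remainder $\sigma^2\|\Delta^k\|^2$ and the $\sigma$-dependent cross terms are what produce the bracketed term $\frac{\sigma}{2}[\cdots]$ in $\widehat\beta_k(\sigma)$, together with the term $p^{1/2}\kappa_H\gamma_1 k\sigma$.
\item Left-hand side at $x^{k+1}$: use (B2), $q_i^k\le g_i$, together with $\lambda^{k+1}=[\lambda^k+\sigma q^k(x^{k+1})]_+$. Then $\frac1{2\sigma}(\|\lambda^{k+1}\|^2-\|\lambda^k\|^2)$ should dominate a quantity like $\langle\lambda^{k+1},g(x^{k+1})\rangle$ minus $O(\sigma)$ errors. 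This lets me reconstruct $L(x^{k+1},\lambda^{k+1})$.
\end{itemize}
The outcome I aim for is
\[
\psi^{k+1}(x^{k+1})+\tfrac{\alpha}{2}\|x^{k+1}-\widehat x^k\|^2\le\psi^k(\widehat x^k)+\tfrac{\alpha}{2}\|\widehat x^k-x^k\|^2+c\,\widehat\beta_k(\sigma)\|\widehat x^k-x^k\|^2+\gamma_4\sigma,
\]
with an absolute constant $c$. The $\gamma_4\sigma$ term collects the $\nu_g\gamma_1$, $\gamma_1\|\Delta^k\|$ and $\frac12\|\Delta^k\|^2$ pieces, using \eqref{eq:2lambda} and Lemma \ref{lem:1}.

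\textbf{Step 3 (envelope descent).} By definition,
\[
e_{\alpha^{-1}\psi^{k+1}}(x^{k+1})\le\psi^{k+1}(x^{k+1})+\tfrac{\alpha}{2}\|x^{k+1}-x^{k+1}\|^2=\psi^{k+1}(x^{k+1}),
\]
so it is not immediately enough to evaluate at $x^{k+1}$. Instead I will use the three-point strong convexity inequality for $\psi^k+\frac\alpha2\|\cdot-x^k\|^2$, which has modulus $\alpha-\beta_k$, minimized at $\widehat x^k$. This gives
\[
e_{\alpha^{-1}\psi^k}(x^k)\le\psi^k(z)+\tfrac\alpha2\|z-x^k\|^2-\tfrac{\alpha-\beta_k}{2}\|z-\widehat x^k\|^2 \quad\text{for all } z .
\]
I will combine this with Step 2 and the same subproblem comparison (the $\frac\alpha2$ proximal terms telescope) to trade $\|x^{k+1}-\widehat x^k\|^2$ and $\|x^{k+1}-x^k\|^2$ against $\|\widehat x^k-x^k\|^2$ via $\|a+b\|^2\le2\|a\|^2+2\|b\|^2$. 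The condition $\alpha>16\widehat\beta_k(\sigma)$ is used to absorb all curvature terms into at most $\frac{\alpha}{2}-\frac{\alpha}{16}\cdot$(factor), which leaves a coefficient of $\frac{\alpha}{16}$ on $\|x^k-\widehat x^k\|^2$. Multiplying by $16\alpha$ and substituting $\nabla e=\alpha(x^k-\widehat x^k)$ yields \eqref{eqfinalesti}.

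The main obstacle is Step 2's penalty comparison. One has to show that the change of multiplier, $\psi^{k+1}$ versus $\psi^k$, costs only $O(\sigma)$ uniformly, without producing a term in $\|\widehat x^k-x^k\|$ that is linear rather than quadratic. I would first try to bound $\langle\lambda^{k+1}-\lambda^k,g(\cdot)\rangle$ by $\gamma_1\sigma\nu_g$ using \eqref{eq:lambdainorm}, and handle the remaining linear cross terms through the optimality of $x^{k+1}$.
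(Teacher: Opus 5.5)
The genuine gap is in the third item of Step 2, where you pass from the model value at $x^{k+1}$ back to the true Lagrangian. By the projection identity $\langle\lambda^{k+1},\lambda^{k+1}-\lambda^k-\sigma q^k(x^{k+1})\rangle=0$,
\[
\frac{1}{2\sigma}\left(\|\lambda^{k+1}\|^2-\|\lambda^k\|^2\right)=\langle\lambda^{k+1},q^k(x^{k+1})\rangle-\frac{1}{2\sigma}\|\lambda^{k+1}-\lambda^k\|^2\le\langle\lambda^{k+1},g(x^{k+1})\rangle ,
\]
where the last step uses (B2) and $\lambda^{k+1}\ge 0$. So (B2) gives the reverse of the domination you want. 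It is exactly the inequality Lemma \ref{lem:1} uses to bound $\|\lambda^{k+1}\|$ from above.

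The left-hand side of Lemma \ref{lem:opt-x} is the quantity that gets bounded above. To turn it into $L(x^{k+1},\cdot)$ you must bound it from below by $L(x^{k+1},\cdot)-L(x^k,\cdot)$ minus small errors. That means the models at $x^{k+1}$ must majorize $f$ and $g_i$ up to $O(\|x^{k+1}-x^k\|^2)$. Both (B2) and weak convexity only give minorants. Your plan also never addresses the $f$-part $\langle\nabla f(x^k),x^{k+1}-x^k\rangle+\frac12\langle\Sigma_0^k(x^{k+1}-x^k),x^{k+1}-x^k\rangle$.

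The paper keeps the old multiplier at this point:
\begin{itemize}
\item it cancels $\frac{1}{2\sigma}\|\lambda^{k+1}\|^2$ against $\frac{1}{2\sigma}\|[\lambda^k+\sigma g(x^k)]_+\|^2$ via Lemma \ref{lempls}, with $a=\lambda^k+\sigma q^k(x^{k+1})$ and $b=\lambda^k+\sigma g(x^k)$;
\item it splits $\lambda^{k+1}=\lambda^k+(\lambda^{k+1}-\lambda^k)$ so that $\langle\nabla_xL(x^k,\lambda^k),x^{k+1}-x^k\rangle$ appears;
\item it then invokes $L(x^{k+1},\lambda^k)\le L(x^k,\lambda^k)+\langle\nabla_xL(x^k,\lambda^k),x^{k+1}-x^k\rangle+\frac{\beta_k(\sigma)}{2}\|x^{k+1}-x^k\|^2$.
\end{itemize}
The paper attributes this last inequality to weak convexity. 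It is actually an upper quadratic bound, i.e.\ a smoothness-type property of $f$ and $g_i$. Some such ingredient is unavoidable on either route, and (B2) cannot supply it.

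Step 3 is also pointed the wrong way. The three-point inequality bounds $e_{\alpha^{-1}\psi^k}(x^k)$ from above, which cannot help you bound $e_{\alpha^{-1}\psi^{k+1}}(x^{k+1})$ above in terms of it. On that side you only need the identity $e_{\alpha^{-1}\psi^k}(x^k)=L(\widehat x^k,\lambda^k)+\frac{\alpha}{2}\|\widehat x^k-x^k\|^2$.

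The evaluation at $x^{k+1}$ that you dismiss is exactly what works: $e_{\alpha^{-1}\psi^k}(x^{k+1})\le L(x^{k+1},\lambda^k)$. This is followed by the multiplier swap at the envelope level, $e_{\alpha^{-1}\psi^{k+1}}(x^{k+1})-e_{\alpha^{-1}\psi^k}(x^{k+1})\le\langle\lambda^{k+1}-\lambda^k,g(w^{k+1})\rangle\le\nu_g\gamma_1\sigma$, with $w^{k+1}=\mathrm{prox}_{\alpha^{-1}\psi^k}(x^{k+1})$. That is where your $\gamma_1\sigma\nu_g$ idea belongs.

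Your target inequality must also keep the term $-\frac{\alpha-\beta_k(\sigma)}{2}\|x^{k+1}-x^k\|^2$. With only $\frac{\alpha}{2}\|x^{k+1}-\widehat x^k\|^2$ on the left, there is no descent when $x^{k+1}=\widehat x^k$. The paper combines the two through $\|\widehat x^k-x^{k+1}\|^2\ge\frac14\|\widehat x^k-x^k\|^2-\frac13\|x^{k+1}-x^k\|^2$, after which $\alpha>16\widehat\beta_k(\sigma)$ leaves the coefficient $\frac{\alpha}{16}$.

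The obstacle you flagged, by contrast, is harmless. Terms linear in $\|\widehat x^k-x^k\|$ carry a factor $\sigma$, e.g.\ $([\lambda^k+\sigma g(x^k)]_+-\lambda^k)^T\mathcal{J}g(x^k)(\widehat x^k-x^k)\le\sigma\nu_g p^{1/2}\kappa_g D_0$. So (A1) makes them $O(\sigma)$.
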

{\bf Proof}.
In view of (\ref{eq:opt-x-1}), we have from (\ref{eqnormminus}) that
\begin{equation}\label{eq:opt-x-1a}
\begin{array}{ll}
\displaystyle\langle \nabla_x f(x^k),x^{k+1}-x^k \rangle  +\displaystyle \frac{1}{2}\langle \Sigma^t_0(x^{t+1}-x^t),x^{t+1}-x^t\rangle+ \frac{1}{2\sigma}\|\lambda^{k+1}\|^2
+ \frac{\alpha}{2} \|x^{k+1}-x^k\|^2  \\[5pt]
\leq \displaystyle\langle \nabla_x f(x^k),z-x^k \rangle +\displaystyle \frac{1}{2}\langle \Sigma^t_0(z-x^t),z-x^t\rangle \\[5pt]
\quad\quad +
\displaystyle \frac{1}{2\sigma}\|[\lambda^k+\sigma g(x^k)]_+\|^2+
[\lambda^k+\sigma g(x^k)]_+^T{\cal J}g(x^k)(z-x^k)\\[5pt]
\quad\quad+[\lambda^k+\sigma g(x^k)]_+^T\Delta^k_{\Sigma}(z) +\displaystyle \frac{\sigma}{2}
\|\Delta^k(z))\|^2+\displaystyle \frac{\alpha}{2}(\|z-x^k\|^2-\|z-x^{k+1}\|^2)\\[5pt]
\leq \displaystyle\langle \nabla_x L(x^k,\lambda^k),z-x^k \rangle +\displaystyle \frac{1}{2}\langle \Sigma^t_0(z-x^t),z-x^t\rangle \\[5pt]
\quad\quad +
\displaystyle \frac{1}{2\sigma}\|[\lambda^k+\sigma g(x^k)]_+\|^2+
([\lambda^k+\sigma g(x^k)]_+-\lambda^k)^T{\cal J}g(x^k)(z-x^k)\\[5pt]
\quad\quad+[\lambda^k+\sigma g(x^k)]_+^T\Delta^k_{\Sigma}(z) +\displaystyle \frac{\sigma}{2}
\|\Delta^k(z))\|^2+\displaystyle \frac{\alpha}{2}(\|z-x^k\|^2-\|z-x^{k+1}\|^2)
\end{array}
\end{equation}
Since $x \rightarrow L(x,\lambda^k)$ is $\beta_k(\sigma)$-weakly convex, we have
$$
L(z,\lambda^k)-L(x^k,\lambda^k)\geq \displaystyle\langle \nabla_x L(x^k,\lambda^k),z-x^k \rangle
-\frac{\beta_k(\sigma)}{2} \|z-x^k\|^2,
$$
and from this we obtain
\begin{equation}\label{eq:opt-x-1aa}
\begin{array}{ll}
\displaystyle\langle \nabla_x f(x^k),x^{k+1}-x^k \rangle  +\displaystyle \frac{1}{2}\langle \Sigma^t_0(x^{t+1}-x^t),x^{t+1}-x^t\rangle+ \frac{1}{2\sigma}\|\lambda^{k+1}\|^2
+ \frac{\alpha}{2} \|x^{k+1}-x^k\|^2  \\[5pt]
\leq L(x,\lambda^k)-L(x^k,\lambda^k)+{\frac{\beta_k(\sigma)}{2}} \|z-x^k\|^2+\displaystyle \frac{1}{2}\langle \Sigma^t_0(z-x^t),z-x^t\rangle \\[5pt]
\quad\quad +
\displaystyle \frac{1}{2\sigma}\|[\lambda^k+\sigma g(x^k)]_+\|^2+
\nu_gp^{1/2}\kappa_g\sigma\|z-x^k\|+\displaystyle \frac{\alpha}{2}(\|z-x^k\|^2-\|z-x^{k+1}\|^2)\\[5pt]
\quad\quad+ {\frac{1}{2}}\|\lambda^k+\sigma g(x^k)\|p^{1/2}\kappa_{H}\|z-x^k\|^2 +\displaystyle \frac{\sigma}{2}\left[2p\kappa_g^2+\displaystyle \frac{1}{2}p\kappa_{H}^2D_0^2  \right]\|z-x^k\|^2\end{array}
\end{equation}
\[
\begin{array}{l}
\leq
L(x,\lambda^k)-L(x^k,\lambda^k)+{\frac{\beta_k(\sigma)}{2}} \|z-x^k\|^2+\displaystyle \frac{1}{2}\langle \Sigma^t_0(z-x^t),z-x^t\rangle \\[5pt]
\quad\quad +
\displaystyle \frac{1}{2\sigma}\|\lambda^{k+1}\|^2-\langle \lambda^{k+1},\Delta^k(x^{k+1})\rangle
+\displaystyle \frac{\sigma}{2}\|\Delta^k(x^{k+1})\|^2+{\nu_gp^{1/2}\kappa_g\sigma\|z-x^k\|}\\[5pt]
\quad \quad +\displaystyle \frac{\alpha}{2}(\|z-x^k\|^2-\|z-x^{k+1}\|^2)\\[5pt]
\quad\quad+{\frac{1}{2}}\|\lambda^k+\sigma g(x^k)\|p^{1/2}\kappa_{H}\|z-x^k\|^2 +\displaystyle \frac{\sigma}{2}\left[2p\kappa_g^2+\displaystyle \frac{1}{2}p\kappa_{H}^2D_0^2  \right]\|z-x^k\|^2
\end{array}
\]
From the $\beta_{k+1}$-weakly convexity of $L(x,\lambda^{k+1})$, we obtain
\begin{equation}
    \begin{aligned}
        & - \langle \nabla_x f(x^k),x^{k+1}-x^k \rangle - L(x^k,y^k) - \langle \lambda^{k+1}, \Delta^{k}(x^{k+1}) \rangle \\
         \leq &- L(x^{k+1},y^k)  + \frac{\beta_{k}(\sigma)}{2} \Vert x^{k+1} - x^{k} \Vert^2 -\langle \lambda^k , \Delta_{\Sigma}^{k}(x^{k+1}) \rangle + \langle \lambda^k - \lambda^{k+1}, \Delta^{k}(x^{k+1}) \rangle
    \end{aligned}
\end{equation}
and
\begin{equation}\label{eq:opt-x-1ab}
\begin{array}{ll}
0\leq -\displaystyle\langle \nabla_x f(x^k),x^{k+1}-x^k \rangle -\displaystyle \frac{1}{2}\langle \Sigma^t_0(x^{t+1}-x^t),x^{t+1}-x^t\rangle- \frac{1}{2\sigma}\|\lambda^{k+1}\|^2
- \frac{\alpha}{2} \|x^{k+1}-x^k\|^2  \\[5pt]
\quad \quad +L(x,\lambda^k)-L(x^k,\lambda^k)+\beta_k(\sigma) \|z-x^k\|^2+\displaystyle \frac{1}{2}\langle
\Sigma^t_0(z-x^t),z-x^t\rangle \\[5pt]
\quad\quad +
\displaystyle \frac{1}{2\sigma}\|[\lambda^k+\sigma g(x^k)]_+\|^2+
\nu_gp^{1/2}\kappa_g\sigma\|z-x^k\|+\displaystyle \frac{\alpha}{2}(\|z-x^k\|^2-\|z-x^{k+1}\|^2)\\[5pt]
\quad\quad+\|\lambda^k+\sigma g(x^k)\|p^{1/2}\kappa_{H}\|z-x^k\|^2 +\displaystyle \frac{\sigma}{2}\left[2p\kappa_g^2+\displaystyle \frac{1}{2}p\kappa_{H}^2D_0^2  \right]\|z-x^k\|^2\\[5pt]
\leq -\displaystyle \frac{1}{2}\langle \Sigma^t_0(x^{t+1}-x^t),x^{t+1}-x^t\rangle
- \frac{\alpha}{2} \|x^{k+1}-x^k\|^2+\displaystyle \frac{\alpha}{2}(\|z-x^k\|^2-\|z-x^{k+1}\|^2)  \\[5pt]
\quad \quad +L(x,\lambda^k)-L(x^k,\lambda^k)+\beta_k(\sigma) \|z-x^k\|^2+\displaystyle \frac{1}{2}\langle \Sigma^t_0(z-x^t),z-x^t\rangle \\[5pt]
\quad\quad -(\displaystyle\langle \nabla_x f(x^k),x^{k+1}-x^k \rangle +\langle \lambda^{k},\Delta^k(x^{k+1})\rangle)+\langle \lambda^k-\lambda^{k+1},\Delta^k(x^{k+1})\rangle
+\displaystyle \frac{\sigma}{2}\|\Delta^k(x^{k+1})\|^2\\[5pt]
\quad\quad+\|\lambda^k+\sigma g(x^k)\|p^{1/2}\kappa_{H}\|z-x^k\|^2 +\displaystyle \frac{\sigma}{2}\left[2p\kappa_g^2+\displaystyle \frac{1}{2}p\kappa_{H}^2D_0^2  \right]\|z-x^k\|^2
\\[5pt]
=-\displaystyle \frac{1}{2}\langle \Sigma^t_0(x^{t+1}-x^t),x^{t+1}-x^t\rangle
- \frac{\alpha}{2} \|x^{k+1}-x^k\|^2 +\displaystyle \frac{\alpha}{2}(\|z-x^k\|^2-\|z-x^{k+1}\|^2) \\[5pt]
\quad \quad +L(x,\lambda^k)-L(x^k,\lambda^k)+\beta_k(\sigma) \|z-x^k\|^2+\displaystyle \frac{1}{2}\langle \Sigma^t_0(z-x^t),z-x^t\rangle \\[5pt]
\quad\quad -\displaystyle\langle \nabla_x L(x^k,\lambda^k),x^{k+1}-x^k \rangle -\langle \lambda^{k},\Delta^k_{\Sigma}(x^{k+1})\rangle)+\langle \lambda^k-\lambda^{k+1},\Delta^k(x^{k+1})\rangle
+\displaystyle \frac{\sigma}{2}\|\Delta^k(x^{k+1})\|^2\\[5pt]
\quad\quad+\|\lambda^k+\sigma g(x^k)\|p^{1/2}\kappa_{H}\|z-x^k\|^2 +\displaystyle \frac{\sigma}{2}\left[2p\kappa_g^2+\displaystyle \frac{1}{2}p\kappa_{H}^2D_0^2  \right]\|z-x^k\|^2
\\[5pt]
\leq -\displaystyle \frac{1}{2}\langle \Sigma^t_0(x^{t+1}-x^t),x^{t+1}-x^t\rangle
- \frac{\alpha-\beta_k(\sigma)}{2} \|x^{k+1}-x^k\|^2 +\displaystyle \frac{\alpha}{2}(\|z-x^k\|^2-\|z-x^{k+1}\|^2) \\[5pt]
\quad \quad +L(x,\lambda^k)-L(x^{k+1},\lambda^k)+\beta_k(\sigma) \|z-x^k\|^2+\displaystyle \frac{1}{2}\langle \Sigma^t_0(z-x^t),z-x^t\rangle \\[5pt]
\quad\quad -\langle \lambda^{k},\Delta^k_{\Sigma}(x^{k+1})\rangle)+\langle \lambda^k-\lambda^{k+1},\Delta^k(x^{k+1})\rangle
+\displaystyle \frac{\sigma}{2}\|\Delta^k(x^{k+1})\|^2\\[5pt]
\quad\quad+\|\lambda^k+\sigma g(x^k)\|p^{1/2}\kappa_{H}\|z-x^k\|^2 +\displaystyle \frac{\sigma}{2}\left[2p\kappa_g^2+\displaystyle \frac{1}{2}p\kappa_{H}^2D_0^2  \right]\|z-x^k\|^2
.
\end{array}
\end{equation}
From the definition of $\widehat \beta_k(\sigma)$, we have
\begin{equation}\label{betahat}
\widehat \beta_k(\sigma)\geq {\frac{1}{2}}\beta_k(\sigma)+\displaystyle\frac{1}{2}\|\Sigma^t_0\|+ {\frac{1}{2}}\|\lambda^k+\sigma g(x^k)\|p^{1/2}\kappa_{H}+\displaystyle \frac{\sigma}{2}\left[2p\kappa_g^2+\displaystyle \frac{1}{2}p\kappa_{H}^2D_0^2  \right].
\end{equation}
Then we have a simple inequality from (\ref{eq:opt-x-1ab}) as follows
\begin{equation}\label{eq:opt-x-1ac}
\begin{array}{ll}
0\leq  -\displaystyle \frac{1}{2}\langle \Sigma^t_0(x^{t+1}-x^t),x^{t+1}-x^t\rangle
- \frac{\alpha-\beta_k(\sigma)}{2} \|x^{k+1}-x^k\|^2+\displaystyle \frac{\alpha}{2}(\|z-x^k\|^2-\|z-x^{k+1}\|^2)   \\[8pt]
\quad\quad +L(x,\lambda^k)-L(x^{k+1},\lambda^k)+\widehat \beta_k(\sigma) \|z-x^k\|^2 +{\nu_gp^{1/2}\kappa_g\sigma\|z-x^k\|}\\[5pt]
\quad\quad -\langle \lambda^{k},\Delta^k_{\Sigma}(x^{k+1})\rangle)+\langle \lambda^k-\lambda^{k+1},\Delta^k(x^{k+1})\rangle
+\displaystyle \frac{\sigma}{2}\|\Delta^k(x^{k+1})\|^2.
\end{array}
\end{equation}
Let
$$
\psi_k(z)=L(z,\lambda^k)+\delta_X(x)
$$
and
$$
\hat x^k={\rm prox}_{\alpha^{-1}\psi^k}(x^k),
$$
then from  \cite[Theorem 2.26]{RW98}, we obtain
\begin{equation}\label{envg}
\hat x^k-x^k=\alpha^{-1}\nabla e_{\alpha^{-1}\psi^k}(x^k).
\end{equation}
For $x^{k+1}$, we have
\begin{equation}\label{envpl}
e_{\alpha^{-1}\psi^k}(x^{k+1})=\inf_{z \in X}\left\{L(z,\lambda^k)+\displaystyle
\frac{\alpha}{2}\|z-x^{k+1}\|^2\right\}\leq L(x^{k+1}, \lambda^k).
\end{equation}
Setting $z=\hat x^k$ in (\ref{eq:opt-x-1ac}), we get
\begin{equation}\label{eq:opt-x-1ad}
\begin{array}{ll}
0\leq  -\displaystyle \frac{1}{2}\langle \Sigma^t_0(x^{t+1}-x^t),x^{t+1}-x^t\rangle
- \frac{\alpha-\beta_k(\sigma)}{2} \|x^{k+1}-x^k\|^2  \\[5pt]
\quad \quad + e_{\alpha^{-1}\psi^k}(x^k)-L(x^{k+1},\lambda^k)+\widehat \beta_k(\sigma) \|\hat x^k-x^k\|^2 -\displaystyle \frac{\alpha}{2}\|\hat x^k-x^{k+1}\|^2\\[15pt]
\quad\quad -\langle \lambda^{k},\Delta^k_{\Sigma}(x^{k+1})\rangle)+\langle \lambda^k-\lambda^{k+1},\Delta^k(x^{k+1})\rangle
+\displaystyle \frac{\sigma}{2}\|\Delta^k(x^{k+1})\|^2 +{\nu_gp^{1/2}\kappa_g\sigma\|\hat{x}-x^k\|}\\[15pt]
\leq  -\displaystyle \frac{1}{2}\langle \Sigma^t_0(x^{t+1}-x^t),x^{t+1}-x^t\rangle
- \frac{\alpha-\beta_k(\sigma)}{2} \|x^{k+1}-x^k\|^2  \\[5pt]
\quad \quad + e_{\alpha^{-1}\psi^k}(x^k)-e_{\alpha^{-1}\psi^k}(x^{k+1})+\widehat \beta_k(\sigma) \|\hat x^k-x^k\|^2-\displaystyle \frac{\alpha}{2}\|\hat x^k-x^{k+1}\|^2 \\[15pt]
\quad\quad -\langle \lambda^{k},\Delta^k_{\Sigma}(x^{k+1})\rangle)+\langle \lambda^k-\lambda^{k+1},\Delta^k(x^{k+1})\rangle
+\displaystyle \frac{\sigma}{2}\|\Delta^k(x^{k+1})\|^2 +{\nu_gp^{1/2}\kappa_g\sigma\|\hat{x}-x^k\|}.
\end{array}
\end{equation}
Noting for $w^{k+1}={\rm prox}_{\alpha^{-1}\psi_k}(x^{k+1})$, we have
$$
\begin{array}{l}
e_{\alpha^{-1}\psi_{k+1}}(x^{k+1})-e_{\alpha^{-1}\psi^k}(x^{k+1})\\[4pt]
\leq L(w^{k+1},\lambda^{k+1})+\displaystyle \frac{\alpha}{2}
\|w^{k+1}-x^{k+1}\|^2-\left(L(w^{k+1},\lambda^{k})+\displaystyle \frac{\alpha}{2}
\|w^{k+1}-x^{k+1}\|^2  \right)\\[10pt]
=(\lambda^{k+1}-\lambda^k)^Tg(w^{k+1})
\leq \nu_g\|\lambda^{k+1}-\lambda^k\|\leq \nu_g\gamma_1\sigma \quad \text{(from Lemma 2.3)}.
\end{array}
$$
Thus, we obtain from (\ref{eq:opt-x-1ad}) that
\begin{equation}\label{eq:opt-x-1ae}
\begin{array}{ll}
0
\leq  -\displaystyle \frac{1}{2}\langle \Sigma^t_0(x^{t+1}-x^t),x^{t+1}-x^t\rangle
- \frac{\alpha-\beta_k(\sigma)}{2} \|x^{k+1}-x^k\|^2 -\displaystyle \frac{\alpha}{2}\|\hat x^k-x^{k+1}\|^2 \\[8pt]
\quad \quad + e_{\alpha^{-1}\psi^k}(x^k)-e_{\alpha^{-1}\psi^{k+1}}(x^{k+1})+{\nu_g[\gamma_1+p^{1/2}\kappa_{g}D_{0}]\sigma}+\widehat \beta_k(\sigma) \|\hat x^k-x^k\|^2 \\[8pt]
\quad\quad -\langle \lambda^{k},\Delta^k_{\Sigma}(x^{k+1})\rangle)+\langle \lambda^k-\lambda^{k+1},\Delta^k(x^{k+1})\rangle
+\displaystyle \frac{\sigma}{2}\|\Delta^k(x^{k+1})\|^2.
\end{array}
\end{equation}
Using a simple inequality $(a-b)^2\geq \displaystyle \frac{1}{4}a^2-\displaystyle \frac{1}{3}b^2$, we have
$$
-\displaystyle \frac{\alpha}{2}\|\hat x^k-x^{k+1}\|^2\leq- \displaystyle \frac{\alpha}{8}\|\hat x^k-x^k\|^2+\displaystyle \frac{\alpha}{6}\|x^{k+1}-x^k\|^2.
$$
Then we obtain from (\ref{eq:opt-x-1ae}) that
\begin{equation}\label{eq:opt-x-1af}
\begin{array}{ll}
\left(\displaystyle \frac{\alpha}{8}-\widehat \beta_k(\sigma)\right)\|\hat x^k-x^k\|^2
\leq  -\displaystyle \frac{1}{2}\langle \Sigma^t_0(x^{t+1}-x^t),x^{t+1}-x^t\rangle
- \left(\frac{\alpha}{3} -\frac{\beta_k(\sigma)}{2}\right)\|x^{k+1}-x^k\|^2  \\[10pt]
\quad \quad \quad + e_{\alpha^{-1}\psi^k}(x^k)-e_{\alpha^{-1}\psi^{k+1}}(x^{k+1})+{\nu_g[\gamma_1+p^{1/2}\kappa_{g}D_{0}]\sigma} \\[10pt]
\quad\quad \quad -\langle \lambda^{k},\Delta^k_{\Sigma}(x^{k+1})\rangle)+\langle \lambda^k-\lambda^{k+1},\Delta^k(x^{k+1})\rangle
+\displaystyle \frac{\sigma}{2}\|\Delta^k(x^{k+1})\|^2\\[10pt]
\leq -\displaystyle \frac{1}{2}\left\langle \left(\Sigma^k_0-\displaystyle \sum_{i=1}^p \lambda^k_i \Sigma^k_i\right)(x^{t+1}-x^t),x^{t+1}-x^t\right\rangle
- \left(\frac{\alpha}{3} -\frac{\beta_k(\sigma)}{2}\right)\|x^{k+1}-x^k\|^2  \\[15pt]
\quad \quad  + e_{\alpha^{-1}\psi^k}(x^k)-e_{\alpha^{-1}\psi^{k+1}}(x^{k+1})\\[8pt]
\quad \quad +\left\{{\nu_g[\gamma_1 + p^{1/2}\kappa_{g}D_{0}]}
+
\gamma_1[p^{1/2}\kappa_gD_0+p^{1/2}D_0^2\kappa_{H}]+\displaystyle \frac{1}{2}[p^{1/2}\kappa_gD_0+p^{1/2}D_0^2\kappa_{H}]^2\right\}
\sigma\\[10pt]
\leq -\displaystyle \frac{1}{2}\left\langle \left(\Sigma^k_0-\displaystyle \sum_{i=1}^p \lambda^k_i \Sigma^k_i\right)(x^{t+1}-x^t),x^{t+1}-x^t\right\rangle
- \left(\frac{\alpha}{3} -\frac{\beta_k(\sigma)}{2}\right)\|x^{k+1}-x^k\|^2  \\[15pt]
\quad \quad  + e_{\alpha^{-1}\psi^k}(x^k)-e_{\alpha^{-1}\psi^{k+1}}(x^{k+1})+
\gamma_4 \sigma
\end{array}
\end{equation}
where $\gamma_4$ is defined by (\ref{eqga4}).
Suppose
\begin{equation}\label{eqal1a}
\alpha> \max \{ 16 \hat \beta_k(\sigma), 3\beta_k(\sigma)/2\}=16 \hat \beta_k(\sigma),
\end{equation}
then (\ref{eq:opt-x-1af}) implies
$$
\begin{array}{l}
\displaystyle \frac{\alpha}{16}\|\hat x^k-x^k\|^2
\leq  e_{\alpha^{-1}\psi^k}(x^k)-e_{\alpha^{-1}\psi^{k+1}}(x^{k+1})+
\gamma_4 \sigma,
\end{array}
$$
or equivalently (noting $\|\hat x^k-x^k\|^2=\alpha^{-2}\|\nabla e_{\alpha^{-1}\psi^k}(x^k)\|^2$)
$$
\begin{array}{l}
 \|\nabla e_{\alpha^{-1}\psi^k}(x^k)\|^2
\leq 16 \alpha \left[ e_{\alpha^{-1}\psi^k}(x^k)-e_{\alpha^{-1}\psi^{k+1}}(x^{k+1})+
\gamma_4 \sigma \right],
\end{array}
$$
namely (\ref{eqfinalesti}) holds.
\hfill $\Box$

The following result is crucial for estimating the constraint violation.
\begin{proposition}\label{prop:cregret}
Let $(x^t,\lambda^t)$ be generated by QPALM and Assumptions (A3) and  (B1) be satisfied.
Then for  $i=1,\ldots,p$,
\begin{equation}\label{eq:ccomineq0}
\begin{array}{ll}
\displaystyle \sum_{t=1}^T g_i(x^t)
&\leq
\displaystyle \frac{1}{\sigma} \lambda^{T+1}_i+\left[\kappa_g+\displaystyle \frac{\kappa_{H}D_0}{2}\right]\sum_{t=1}^T\|x^{t+1}-x^t\|.
\end{array}
\end{equation}
\end{proposition}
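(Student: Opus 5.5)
The bound comes from the multiplier update in (\ref{xna}) together with $\lambda^1=0$. Since $[a]_+\ge a$ for every real $a$, the update gives, for each $t$ and $i$,
\[
\lambda_i^{t+1}=[\lambda_i^t+\sigma q_i^t(x^{t+1})]_+\ \ge\ \lambda_i^t+\sigma q_i^t(x^{t+1}),
\qquad\text{so}\qquad
q_i^t(x^{t+1})\le \frac{1}{\sigma}(\lambda_i^{t+1}-\lambda_i^t).
\]
Summing over $t=1,\ldots,T$ telescopes the right-hand side to $\frac{1}{\sigma}(\lambda_i^{T+1}-\lambda_i^1)=\frac{1}{\sigma}\lambda_i^{T+1}$.

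Next we relate $q_i^t(x^{t+1})$ to $g_i(x^t)$. By the definition (\ref{eq:qs}),
\[
g_i(x^t)=q_i^t(x^{t+1})-\langle \nabla_x g_i(x^t),x^{t+1}-x^t\rangle-\tfrac12\langle \Sigma_i^t(x^{t+1}-x^t),x^{t+1}-x^t\rangle .
\]
Assumption (A3) bounds the linear term by $\kappa_g\|x^{t+1}-x^t\|$. Assumption (B1) bounds the quadratic term by $\frac12\kappa_H\|x^{t+1}-x^t\|^2$.

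For the quadratic term we use $\|x^{t+1}-x^t\|\le D_0$, which holds by (A1) since all iterates $x^{t+1}$ lie in $X$. This gives $\frac12\kappa_H\|x^{t+1}-x^t\|^2\le \frac{\kappa_H D_0}{2}\|x^{t+1}-x^t\|$. (Since (B1) makes $\Sigma_i^t$ negative semidefinite, $-\frac12\langle\Sigma_i^t d,d\rangle\ge0$, so this term really has to be bounded rather than dropped.) Hence
\[
g_i(x^t)\le q_i^t(x^{t+1})+\Big[\kappa_g+\frac{\kappa_H D_0}{2}\Big]\|x^{t+1}-x^t\|.
\]

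Summing this over $t=1,\ldots,T$ and inserting the telescoped bound from the first step yields (\ref{eq:ccomineq0}).

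The one point needing care is the first iterate. The step $\|x^{t+1}-x^t\|\le D_0$ for $t=1$ requires $x^1\in X$, whereas Step 0 only says $x^1\in\mathbb R^n$. I would either assume $x^1\in X$, or note that the estimate is only used for $t\ge 2$, where both $x^t$ and $x^{t+1}$ lie in $X$. Apart from this, every step is elementary.
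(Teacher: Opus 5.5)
Your proof is correct and matches the paper's argument: the paper also starts from $\lambda_i^{t+1}\ge\lambda_i^t+\sigma q_i^t(x^{t+1})$, bounds the linear and quadratic terms using (A3), (B1) and $\|x^{t+1}-x^t\|\le D_0$ (which silently uses (A1), not listed in the statement), and telescopes with $\lambda^1=0$. Your caveat about $x^1$ is legitimate and is equally implicit in the paper, but only your first remedy (assuming $x^1\in X$) works, because the stated sum includes the $t=1$ term; moreover $g_i(x^1)$ and $\nabla g_i(x^1)$ need $x^1\in{\cal O}_0$ even to be defined and bounded.
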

{\bf Proof}.
From the definition $\lambda^{t+1}_i=[\lambda^t_i+\sigma q^t_i(x^{t+1})]_+$, we have  that
$$
\begin{array}{ll}
\lambda^{t+1}_i
&\geq \lambda^t_i+\sigma\left( g_i(x^t)+\langle \nabla_x g_i(x^t), x^{t+1}-x^t\rangle +\displaystyle \frac{1}{2} \left \langle\Sigma^t_i (x^{t+1}-x^t),x^{t+1}-x^t \right \rangle \right)\\[10pt]
& \geq \lambda^t_i+\sigma\left(g_i(x^t)- \|\nabla_x g_i(x^t)\|\|x^{t+1}-x^t\|-\displaystyle \frac{1}{2} \|\Sigma^t_i\|\| x^{t+1}-x^t\|^2\right),
\end{array}
$$
which, from Assumptions (B1) and (A3), implies that
$$
\begin{array}{ll}
\displaystyle \sum_{t=1}^T g_i(x^t)&\leq  \displaystyle \frac{1}{\sigma} \lambda^{T+1}_i+ \sum_{t=1}^T\|\nabla_x g_i(x^t)\|\|x^{t+1}-x^t\|+ \displaystyle \frac{1}{2} \sum_{t=1}^T\|\Sigma^t_i\|\|x^{t+1}-x^t\|^2\\[10pt]
&\leq
\displaystyle \frac{1}{\sigma} \lambda^{T+1}_i+ \kappa_g\sum_{t=1}^T\|x^{t+1}-x^t\|+\displaystyle \frac{\kappa_{H}D_0}{2}\sum_{t=1}^T\|x^{t+1}-x^t\|,
\end{array}
$$
which is just (\ref{eq:ccomineq0}). \hfill $\Box$\\
The following proposition is about the total complementarity violation.
\begin{proposition}\label{prop:compregret}
Let $(x^t,\lambda^t)$ be generated by QPALM and Assumptions (A2), (A3) and (B1), (B3) and (B4)be satisfied.
Then
\begin{equation}\label{eq:comineq0}
\begin{array}{ll}
-\displaystyle \sum_{t=1}^T\langle \lambda^t, g(x^t)\rangle
&\leq
\displaystyle \frac{1}{2\sigma} [\|\lambda^1\|^2-\|\lambda^{T+1}\|^2]+\displaystyle \frac{\sigma}{2}\sum_{t=1}^T\|g(x^t)\|^2+
\displaystyle \frac{1}{2\alpha}\sum_{t=1}^T\|\nabla_x f(x^t)\|^2
\end{array}
\end{equation}
and
\begin{equation}\label{eq:comcineqb}
-\left( \displaystyle \sum_{t=1}^T\langle \lambda^t, g(x^t) \rangle\right)\leq \displaystyle \frac{\sigma}{2}\nu_g^2T+\displaystyle \frac{1}{2\alpha}\kappa_f^2T.
\end{equation}
\end{proposition}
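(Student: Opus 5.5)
The plan is a one-step inequality for each $t$, obtained from the optimality estimate (\ref{eq:opt-x-2}) of Lemma \ref{lem:opt-x}, and then a telescoping sum over $t=1,\ldots,T$.

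\textbf{Step 1: the per-iteration inequality.} Fix $t$ and write $d^t:=x^{t+1}-x^t$. Taking $z=x^t$ in Lemma \ref{lem:opt-x} gives (\ref{eq:opt-x-2}):
\[
\langle \nabla_x f(x^t),d^t\rangle+\frac{1}{2\sigma}\|\lambda^{t+1}\|^2+\alpha\|d^t\|^2+\frac12\|d^t\|^2_{\Sigma^t_0}\le \frac{1}{2\sigma}\|[\lambda^t+\sigma g(x^t)]_+\|^2 .
\]
To be safe I will only use the weaker coefficient $\alpha/2$ on $\|d^t\|^2$, which is what (\ref{eq:opt-x-1}) directly yields.

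\textbf{Step 2: bounding the two sides.} For the right-hand side, I use $[a]_+^2\le a^2$ componentwise and expand:
\[
\|[\lambda^t+\sigma g(x^t)]_+\|^2\le \|\lambda^t\|^2+2\sigma\langle\lambda^t,g(x^t)\rangle+\sigma^2\|g(x^t)\|^2 .
\]
For the left-hand side, I first drop the $\Sigma^t_0$ term. It is nonnegative because, by (B3), $\Sigma^t_0=I-\sum_i\lambda^t_i\Sigma^t_i$ with $\lambda^t\ge0$, and by (B1) each $\Sigma^t_i\preceq0$, so $\Sigma^t_0\succeq I$. Next I minimize the remaining quadratic in $d^t$:
\[
\langle\nabla_x f(x^t),d^t\rangle+\frac{\alpha}{2}\|d^t\|^2\ge -\frac{1}{2\alpha}\|\nabla_x f(x^t)\|^2 .
\]

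\textbf{Step 3: rearranging.} Multiplying through and rearranging gives
\[
-\langle\lambda^t,g(x^t)\rangle\le \frac{1}{2\sigma}\big(\|\lambda^t\|^2-\|\lambda^{t+1}\|^2\big)+\frac{\sigma}{2}\|g(x^t)\|^2+\frac{1}{2\alpha}\|\nabla_x f(x^t)\|^2 .
\]

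\textbf{Step 4: summation.} Summing over $t=1,\ldots,T$, the multiplier terms telescope to $\frac{1}{2\sigma}(\|\lambda^1\|^2-\|\lambda^{T+1}\|^2)$, which gives (\ref{eq:comineq0}).

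\textbf{Step 5: the explicit bound.} For (\ref{eq:comcineqb}), I use $\lambda^1=0$ and drop $-\|\lambda^{T+1}\|^2\le0$. Then I bound $\|g(x^t)\|\le\nu_g$ by (A2) and $\|\nabla_x f(x^t)\|\le\kappa_f$ by (A3).

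\textbf{Main obstacle.} The only delicate point is justifying that the $\Sigma^t_0$ term may be discarded, which is the positive semidefiniteness argument in Step 2. It relies on the specific choice of $\Sigma^t_0$ in (B3) together with $\lambda^t\ge0$, which holds because each $\lambda^t$ is a projection onto $\mathbb R^p_+$. Everything else is routine algebra.
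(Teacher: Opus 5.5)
Your proposal is correct and follows the paper's proof essentially step for step: (\ref{eq:opt-x-2}), the bound $[a]_+^2\le a^2$, Young's inequality on $\langle\nabla_x f(x^t),d^t\rangle+\frac{\alpha}{2}\|d^t\|^2$, and telescoping with $\lambda^1=0$. You also explicitly check that $\frac12\|d^t\|^2_{\Sigma^t_0}\ge 0$ via (B1), (B3) and $\lambda^t\ge 0$; the paper uses this nonnegativity only implicitly when it discards that term before summing.
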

{\bf Proof}.
It follows from (\ref{eq:opt-x-2}) that
$$
\begin{array}{ll}
\displaystyle\langle \nabla_x f(x^t),x^{t+1}-x^t \rangle + \frac{1}{2\sigma}\|\lambda^{t+1}\|^2
+ \alpha \|x^{t+1}-x^t\|^2+\displaystyle \frac{1}{2}\|x^{t+1}-x^t\|^2_{\Sigma^t_0}\\[10pt]
\leq  \displaystyle\frac{1}{2\sigma}\|[\lambda^t+\sigma g(x^t)]_+\|^2\leq \displaystyle\frac{1}{2\sigma}\|[\lambda^t+\sigma g(x^t)]\|^2
\end{array}
$$
which implies
$$
\begin{array}{ll}
-\langle \lambda^t, g(x^t)\rangle & \leq  \displaystyle \frac{1}{2\sigma}[\|\lambda^t\|^2-\|\lambda^{t+1}\|^2] -\displaystyle\langle \nabla_x f(x^t),x^{t+1}-x^t \rangle\\[10pt]
&\quad - \alpha \|x^{t+1}-x^t\|^2-\displaystyle \frac{1}{2}\|x^{t+1}-x^t\|^2_{\Sigma^t_0}
+ \displaystyle\frac{\sigma}{2}\|
g(x^t)\|^2\\[10pt]
&\leq \displaystyle \frac{1}{2\sigma}[\|\lambda^t\|^2-\|\lambda^{t+1}\|^2]+\displaystyle \frac{1}{2\alpha}\|\nabla_x f(x^t)\|^2\\[10pt]
& \quad \quad -\displaystyle \frac{\alpha}{2} \|x^{t+1}-x^t\|^2-\displaystyle \frac{1}{2}\|x^{t+1}-x^t\|^2_{\Sigma^t_0}
+ \displaystyle\frac{\sigma}{2}\| g(x^t)\|^2.
\end{array}
$$
Making a sum from $1$ to $T$, we obtain (\ref{eq:comineq0}).
One gets  (\ref{eq:comcineqb})  from (\ref{eq:comineq0}).\hfill $\Box$

\begin{lemma}\label{lemTwh}
If  $T$ be a positive integer satisfying
 \begin{equation}\label{eqTg}
 T > \max \left\{\left(L_0+\displaystyle\frac{1}{2}\kappa_{\Sigma}\right)^3,\displaystyle \frac{1}{2}\left[2\nu_gp^{1/2}\kappa_{H}+2p\kappa_g^2+\displaystyle \frac{1}{2}p\kappa_{H}^2D_0^2  \right],\displaystyle \frac{p(\kappa_g+\kappa_{H}D_0/2)^2}{16\gamma_5}
   \right\},
 \end{equation}
 then
 \begin{equation}\label{hatbN}
 \widehat \beta_T(T^{-2/3})\leq \gamma_5 T^{1/3},
 \end{equation}
 where
\begin{equation}\label{eq:alp}
\gamma_5=\left[2+p^{1/2}\kappa_{H}\gamma_1+\displaystyle \sum_{j=1}^pL_j\gamma_2\right].
\end{equation}
Moreover, if $\alpha =16\gamma_5 T^{1/3}$,$\sigma=T^{-2/3}$, then
\begin{equation}\label{eqxd}
\|x^{t+1}-x^t\| \leq \gamma_6 T^{-1/3},
\end{equation}
where
\begin{equation}\label{eq:g6}
 \gamma_6=\displaystyle\frac{1}{8\gamma_5}
\left(\kappa_f+(\kappa_g+\kappa_{H}D_0/2)\sqrt{p}
[\gamma_3(16\gamma_5, M^*)+\nu_g]\right).
\end{equation}
\end{lemma}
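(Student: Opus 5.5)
The statement has two parts, and I would prove them separately. Both rest on direct substitution of $\sigma=T^{-2/3}$, and in the second part of $\alpha=16\gamma_5T^{1/3}$, followed by term-by-term comparison using the three lower bounds on $T$ in (\ref{eqTg}).

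For (\ref{hatbN}), expand $\widehat\beta_T(T^{-2/3})$ using (\ref{betahatdef}) and (\ref{eq:bk}) with $k=T$ and $\sigma=T^{-2/3}$. This gives
\[
\widehat\beta_T(T^{-2/3})=L_0+\tfrac12\kappa_\Sigma+\Big(\sum_{j=1}^pL_j\gamma_2\Big)T^{1/3}+\tfrac12\Big[2\nu_gp^{1/2}\kappa_H+2p\kappa_g^2+\tfrac12p\kappa_H^2D_0^2\Big]T^{-2/3}+p^{1/2}\kappa_H\gamma_1T^{1/3}.
\]
The two terms with $T^{1/3}$ match the corresponding summands of $\gamma_5$ in (\ref{eq:alp}) exactly, so it remains to absorb the other two terms into $2T^{1/3}$.

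\emph{Constant term.} The first condition in (\ref{eqTg}) gives $L_0+\frac12\kappa_\Sigma<T^{1/3}$.

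\emph{$T^{-2/3}$ term.} The second condition gives that this term is at most $T\cdot T^{-2/3}=T^{1/3}$.

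Adding up yields $\widehat\beta_T\le\gamma_5T^{1/3}$. This step is pure bookkeeping.

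For (\ref{eqxd}), I would invoke Lemma \ref{lem:3} with $\alpha=16\gamma_5T^{1/3}$ and $\sigma=T^{-2/3}$. First check the hypothesis $\alpha-p(\kappa_g+\kappa_HD_0/2)^2\sigma>0$, which reads $16\gamma_5T^{1/3}>p(\kappa_g+\kappa_HD_0/2)^2T^{-2/3}$, i.e. $T>p(\kappa_g+\kappa_HD_0/2)^2/(16\gamma_5)$; this is exactly the third condition in (\ref{eqTg}). Then (\ref{eq:diffXa}) gives
\[
\|x^{t+1}-x^t\|\le\frac{2}{16\gamma_5T^{1/3}}\Big(\kappa_f+(\kappa_g+\kappa_HD_0/2)\sqrt p\,[\|\lambda^t\|+\nu_g\sigma]\Big).
\]
Bound $\|\lambda^t\|$ by $\gamma_3(16\gamma_5,M^*)$ using Corollary \ref{cor:ebound} with $\eta=16\gamma_5$ (valid under the uniform bound $M^*$ from Lemma \ref{lemma:induction_bound}), and use $\sigma\le1$ to replace $\nu_g\sigma$ by $\nu_g$. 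Since $2/16=1/8$, this produces exactly $\gamma_6T^{-1/3}$.

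I expect the main obstacle to be bookkeeping rather than substance. First, the constant $\kappa_\Sigma$ and the multiplier bound implicitly require $T$ large enough for Lemma \ref{lemma:induction_bound} to apply, so I would state that this lemma is in force, per the convention at the start of Section \ref{Sec3}. Second, the paper's parameter choice $s=T^{1/3}$ should be aligned with the formula for $\gamma_3$. Beyond noting these dependencies, no new estimate is needed.
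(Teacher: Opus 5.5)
Your proposal is correct and follows the paper's own argument. You expand $\widehat\beta_T(T^{-2/3})$, absorb the constant and $T^{-2/3}$ terms into $2T^{1/3}$ using the first two lower bounds on $T$, and then apply (\ref{eq:diffXa}) of Lemma \ref{lem:3} (whose hypothesis is exactly the third lower bound) together with $\|\lambda^t\|\le\gamma_3(16\gamma_5,M^*)$ and $\sigma\le 1$. You also spell out the bookkeeping that the paper calls ``easy to verify,'' and your caveat that the multiplier bound tacitly relies on Lemma \ref{lemma:induction_bound} being in force is accurate.
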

 {\bf Proof}.
It is easy to get from the definition of $\widehat \beta_k(\sigma)$ that
\begin{equation}\label{betahatdef0}
\begin{array}{ll}
\widehat \beta_T(T^{-2/3})& =\beta_T(T^{-2/3})+\displaystyle\frac{1}{2}\kappa_{\Sigma}+\displaystyle \frac{T^{-2/3}}{2}\left[2\nu_gp^{1/2}\kappa_{H}+2p\kappa_g^2+\displaystyle \frac{1}{2}p\kappa_{H}^2D_0^2  \right]+p^{1/2}\kappa_{H}\gamma_1T^{1/3}\\[6pt]
&=\left(L_0+\displaystyle\frac{1}{2}\kappa_{\Sigma}\right)+\left(p^{1/2}\kappa_{H}\gamma_1+\displaystyle \sum_{j=1}^pL_j\gamma_2\right)T^{1/3}\\[10pt]
&\quad \quad +\displaystyle \frac{T^{-2/3}}{2}\left[2\nu_gp^{1/2}\kappa_{H}+2p\kappa_g^2+\displaystyle \frac{1}{2}p\kappa_{H}^2D_0^2  \right].
\end{array}
\end{equation}
and obtain (\ref{hatbN}).  It is easy to verify  $\alpha \geq p(\kappa_g+\kappa_{H}D_0/2)^2\sigma>0$ and we obtain from (\ref{eq:diffXa}) that
\[
\begin{array}{ll}
\|x^{t+1}-x^t\|&\leq \displaystyle\frac{2}{\alpha}
\left(\kappa_f+(\kappa_g+\kappa_{H}D_0/2)\sqrt{p}
[\|\lambda^t\|+\nu_g\sigma]\right)\leq \gamma_6 T^{-1/3}.
\end{array}
\]
The proof is completed.
\hfill $\Box$

\begin{theorem}\label{th:mainth}
Let (A1)--- (A5) hold and
\begin{equation}\label{conds}
\begin{array}{l}
 \mbox{$\Sigma^k_i \prec -L_i I$ for $i=1,\ldots p$},\\[6pt]
 \mbox{$\Sigma^k_0 = \sum_{j=1}^p \lambda_j^k \Sigma_j^k + I \succ 0$}.
\end{array}
 \end{equation}
 Let $T$ be a positive integer satisfying
 \begin{equation}\label{eqTg}
 T > \max \left\{\left(L_0+\displaystyle\frac{1}{2}\kappa_{\Sigma}\right)^3,\displaystyle \frac{1}{2}\left[2\nu_gp^{1/2}\kappa_{H}+2p\kappa_g^2+\displaystyle \frac{1}{2}p\kappa_{H}^2D_0^2 \right], \displaystyle \frac{p(\kappa_g+\kappa_{H}D_0/2)^2}{16\gamma_5}
   \right\}.
\end{equation}
 If $\alpha =16\gamma_5T^{1/3}$ and $\sigma=T^{-2/3}$, then
 \begin{itemize}
 \item[(a)]The iteration complexity of  Lagrangian  gradient measure is
 \begin{equation}\label{eq:KKTg}
 \displaystyle \frac{1}{T}
 \displaystyle \sum_{k=1}^T \|\nabla e_{\alpha^{-1}\psi^k}(x^k)\|^2 \leq 16^2\gamma_5\left\{[f(x^1)-f_{*}+\gamma_3(16\gamma_5, M^*)\nu_g]T^{-2/3}+\gamma_4T^{-1/3}\right\},
 \end{equation}
 where $f_{*}=\min_{x \in X} f(x)$.
 \item[(b)] The iteration complexity of constraint violation is
 \begin{equation}\label{eq:consv}
 \displaystyle \frac{1}{T} \sum_{t=1}^T g_i(x^t)\leq \left\{ \gamma_3(16\gamma_5, M^*)+\left(\kappa_g+\displaystyle
 \frac{\kappa_{H}D_0}{2}\right)\gamma_6\right\}T^{-1/3}
 \end{equation}
 \item[(c)] The iteration complexity of complementarity violation is
 \begin{equation}\label{eq:comreg}
-\displaystyle \frac{1}{T}\left( \displaystyle \sum_{t=1}^T\langle \lambda^t, g(x^t) \rangle\right)\leq \displaystyle \frac{\nu_g^2}{2}T^{-2/3}+\displaystyle \frac{1}{32\gamma_5}\kappa_f^2T^{-1/3}.
\end{equation}
 \end{itemize}
\end{theorem}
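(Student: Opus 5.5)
The plan is to assemble Theorem \ref{th:mainth} from the three estimates already available: Theorem \ref{th:L-est} for the Moreau envelope gradient, Proposition \ref{prop:cregret} for constraint violation, and Proposition \ref{prop:compregret} for complementarity. All three are instantiated at $\alpha=16\gamma_5T^{1/3}$ and $\sigma=T^{-2/3}$.

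First I would check the standing hypotheses. By Proposition \ref{corPosiSig}, the conditions (\ref{conds}) give (B1)--(B4); I will read the $\Sigma^k_0$ condition with the minus sign of (B3). Under the largeness condition on $T$ and Lemma \ref{lemma:induction_bound}, we get $\|\lambda^t\|\le M^*$ for all $t\le T$, and hence also the bound $\|\lambda^t\|\le\gamma_3(16\gamma_5,M^*)$ from Corollary \ref{cor:ebound} with $\eta=16\gamma_5$. Next, Lemma \ref{lemTwh} gives $\widehat\beta_T(T^{-2/3})\le\gamma_5T^{1/3}$. Since $\widehat\beta_k(\sigma)$ is nondecreasing in $k$, this yields $16\widehat\beta_k(\sigma)\le16\gamma_5T^{1/3}=\alpha$ for every $k\le T$. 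The strict inequality (\ref{eqal1}) can be obtained by enlarging $\gamma_5$ slightly, or by noting that the lemma's bound is strict for such $T$. Lemma \ref{lemTwh} also gives $\|x^{t+1}-x^t\|\le\gamma_6T^{-1/3}$.

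For (a), I would sum (\ref{eqfinalesti}) over $k=1,\dots,T$. The right-hand side telescopes to $16\alpha[e_{\alpha^{-1}\psi^1}(x^1)-e_{\alpha^{-1}\psi^{T+1}}(x^{T+1})+T\gamma_4\sigma]$. Since $\lambda^1=0$, we have $e_{\alpha^{-1}\psi^1}(x^1)\le L(x^1,0)=f(x^1)$. For the lower bound, $e_{\alpha^{-1}\psi^{T+1}}(x^{T+1})\ge\inf_{z\in X}L(z,\lambda^{T+1})\ge f_*-\|\lambda^{T+1}\|\nu_g\ge f_*-\gamma_3\nu_g$. Dividing by $T$ and inserting $\alpha=16\gamma_5T^{1/3}$ and $\sigma T=T^{1/3}$ gives $16^2\gamma_5\{[f(x^1)-f_*+\gamma_3\nu_g]T^{-2/3}+\gamma_4T^{-1/3}\}$. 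A mild mismatch between the paper's stated $T^{-1/3}$ exponent on the $\gamma_4$ term and this computation has to be reconciled: $16\alpha\gamma_4\sigma=16^2\gamma_5\gamma_4T^{-1/3}$, which agrees with the stated bound.

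For (b), I would divide (\ref{eq:ccomineq0}) by $T$. The first term is $\lambda^{T+1}_i/(\sigma T)\le\gamma_3T^{-1/3}$, using $\sigma T=T^{1/3}$; if $\lambda^{T+1}$ lies just outside the range covered by the bounds, I would extend the induction to $T+1$ or absorb the difference using (\ref{eq:2lambda}). The second term is at most $(\kappa_g+\kappa_HD_0/2)\gamma_6T^{-1/3}$, since each of the $T$ summands is at most $\gamma_6T^{-1/3}$. For (c), I would divide (\ref{eq:comcineqb}) by $T$: $\frac{\sigma}{2}\nu_g^2=\frac{\nu_g^2}{2}T^{-2/3}$ and $\frac{\kappa_f^2}{2\alpha}=\frac{\kappa_f^2}{32\gamma_5}T^{-1/3}$.

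The main obstacle is step (a). It requires verifying (\ref{eqal1}) uniformly in $k\le T$ with the constants frozen at $M^*$, which is where the monotonicity of $\beta_k$ in $k$ and Lemma \ref{lemma:induction_bound} are essential. It also requires producing the lower bound on the final envelope value; that bound needs $g$ bounded on $X$ and $\|\lambda^{T+1}\|$ controlled by $\gamma_3$, which I would handle as described above.
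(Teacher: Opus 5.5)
Your proposal is correct and is essentially the paper's own proof: you obtain (B1)--(B4) from Proposition~\ref{corPosiSig}, get (\ref{eqal1}) from Lemma~\ref{lemTwh}, telescope (\ref{eqfinalesti}) using $e_{\alpha^{-1}\psi^1}(x^1)\le f(x^1)$ and $e_{\alpha^{-1}\psi^{T+1}}(x^{T+1})\ge f_*-\gamma_3\nu_g$, and divide the bounds of Propositions~\ref{prop:cregret} and~\ref{prop:compregret} by $T$, exactly as the paper does. Your additional remarks only make explicit points the paper passes over silently: the monotonicity of $\widehat\beta_k(\sigma)$ in $k$, which gives (\ref{eqal1}) for every $k\le T$; the strictness of the bound in Lemma~\ref{lemTwh}; reading $\Sigma_0^k$ with the minus sign of (B3); and covering $\lambda^{T+1}$ by the multiplier bound.
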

{\bf Proof}. Since Assumptions (A1)--(A5) hold, in view of (\ref{conds}), from Proposition \ref{corPosiSig}, we know from conditions (B1)--(B4) hold.
From the choice $\alpha=16\gamma_5T^{1/3}$ and the condition (\ref{eqTg}) of $T$, we know (\ref{eqal1}) holds, namely
$$
\alpha> 16 \hat \beta_k(\sigma).
$$
Then from Theorem \ref{th:L-est}, we obtain (\ref{eqfinalesti}), namely
$$
\begin{array}{l}
 \|\nabla e_{\alpha^{-1}\psi^k}(x^k)\|^2
\leq 16 \alpha \left[ e_{\alpha^{-1}\psi^k}(x^k)-e_{\alpha^{-1}\psi^{k+1}}(x^{k+1})+
\gamma_4 \sigma \right].
\end{array}
$$
Taking the sum over $1,2,\ldots, T$, divided by $T$, we have
$$
\begin{array}{l}
 \displaystyle \frac{1}{T}\sum_{k=1}^T\|\nabla e_{\alpha^{-1}\psi^k}(x^k)\|^2
\leq 16 \alpha \displaystyle \frac{1}{T} \left[ e_{\alpha^{-1}\psi^1}(x^1)-e_{\alpha^{-1}\psi^{T+1}}(x^{T+1})\right]+
16\gamma_4\alpha \sigma.
\end{array}
$$
Since $e_{\alpha^{-1}\psi^1}(x^1)\leq f(x_1)$ and for $\hat x^{T+1}=
{\rm prox}_{\alpha^{-1}\psi^{T+1}}(x^{T+1})$,
$$
\begin{array}{ll}
e_{\alpha^{-1}\psi^{T+1}}(x^{T+1})& =L(\hat x^{T+1},\lambda^{T+1})+\displaystyle \frac{\alpha}{2}
\|\hat x^{T+1}-x^{T+1}\|^2\\[6pt]
& \geq
L(\hat x^{T+1},\lambda^{T+1})\\[6pt]
&=f(\hat x^{T+1})+\langle \lambda^{T+1}, g(\hat x^{T+1})\rangle
\geq  f_*-\gamma_3\nu_g.
\end{array}
$$
we have
\begin{equation}\label{diffe}
\left[ e_{\alpha^{-1}\psi^1}(x^1)-e_{\alpha^{-1}\psi^{T+1}}(x^{T+1})\right]
\leq f(x^1)-f^*+\gamma_3\nu_g.
\end{equation}
Then we obtain (\ref{eq:KKTg}) from the choices of $\alpha$ and $\sigma$.
In view of Proposition \ref{prop:cregret}, we have that (\ref{eq:ccomineq0}) holds, namely

\begin{equation}\label{eq:ccomineq0}
\begin{array}{ll}
\displaystyle \sum_{t=1}^T g_i(x^t)
&\leq
\displaystyle \frac{1}{\sigma} \lambda^{T+1}_i+\left[\kappa_g+\displaystyle \frac{\kappa_{H}D_0}{2}\right]\sum_{t=1}^T\|x^{t+1}-x^t\|.
\end{array}
\end{equation}
 From (\ref{eqxd}) of Lemma \ref{lemTwh}, we have $\|x^{t+1}-x^t\| \leq \gamma_6 T^{-1/3}$ and $\|\lambda^k\| \leq \gamma_3(16\gamma_5, M^*)$  from  (\ref{eq:lamB}) of  Corollary \ref{cor:ebound}.
Thus, we obtain from (\ref{eq:ccomineq0}) that
 $$
 \begin{array}{ll}
\displaystyle \frac{1}{T}\displaystyle \sum_{t=1}^T g_i(x^t)
&\leq
\displaystyle \frac{1}{T^{1/3}}\left\{\gamma_3(16\gamma_5, M^*)+\left[\kappa_g+\displaystyle \frac{\kappa_{H}D_0}{2}\right]\gamma_6\right\}.
\end{array}
 $$
 Namely, (\ref{eq:consv}) holds.
Estimate (\ref{eq:comreg}) comes from (\ref{eq:comcineqb}) of Proposition \ref{prop:compregret} and the choices of $\alpha$ and $\sigma$.
\hfill $\Box$

\section{Numerical Results}
\setcounter{equation}{0}
In this section, we report numerical results for the proposed proximal method of multipliers with quadratic approximations (QPALM) on three preliminary test problems. We compare QPALM with the proximal augmented Lagrangian method (pALM) in \cite{Adeoye2025} and the classical augmented Lagrangian method \cite{Hestenes1969}, since both methods are closely related to the augmented Lagrangian framework considered in this paper and thus provide natural baselines for comparison. All numerical experiments are implemented in MATLAB 2025a on a MacBook Pro equipped with an Apple M5 chip, 24 GB unified memory, and 1 TB SSD storage.
\subsection{Solving subproblems}
This subsection focuses on solving the subproblem in QPALM, which can be expressed as
\begin{equation}\label{eq:general-subp}
\begin{array}{ll}
\displaystyle   \min_{x \in X}  \phi(x) := & \langle \nabla_{x} f(x^{t}),x-x^t \rangle +\displaystyle \frac{1}{2}\langle \Sigma_{0}^{t} (x-x^t),x-x^t \rangle\\[10pt]
& +
\displaystyle \frac{1}{2\sigma}\displaystyle \sum_{i=1}^p\left[\lambda_{i}+\sigma  \left(\langle \nabla_{x}g_{i}(x^{t}),x-x^t \rangle +\displaystyle \frac{1}{2}\langle \Sigma_{i}^{t} (x-x^t),x-x^t \rangle\right)\right]_+^2+\displaystyle \frac{\alpha}{2}\|x-x^t\|^2.
\end{array}
\end{equation}
Problem (\ref{eq:general-subp}) is a strongly convex optimization problem and we can apply
Nesterov's accelerated projected gradient method (APG) (see \cite{Beck2017}) to solve it.
\begin{algorithm}[H]\label{Algorithm_sub}
	\textbf{Step 0}: Input $x^{0} \in X_{0}$ and $\eta >1$. Set $y^{0} = x^{0}$, $L_{-1} = 1$ and $k : =0$.
\textbf{Step 1}: Set
	\begin{equation}\nonumber
		x^{k+1} = T_{L_{k}} (y^{k}),
	\end{equation}
	where $T_{L} (y) := \Pi_{X_{0}} \left[y - \frac{1}{L}\nabla \Phi (y)\right]$, the stepsize $L_{k} = L_{k-1}\eta^{i_{k}}$ and $i_{k}$ is the smallest nonnegative integer satisfies the following condition
	\begin{equation}\nonumber
		\begin{aligned}
			\phi \left(T_{L_{k-1}\eta ^{i_{k}}}(y^{k})\right) \leq \phi(y^{k}) &+ \langle \nabla \phi (y^{k}), T_{L_{k-1}\eta ^{i_{k}}}(y^{k}) -y^{k} \rangle \\
			&+ \frac{L_{k-1}\eta ^{i_{k}}}{2} \Vert T_{L_{k-1}\eta ^{i_{k}}}(y^{k}) -y^{k} \Vert^2.
\end{aligned}
	\end{equation}
	
	\textbf{Step 2}: Compute
	\begin{equation}\nonumber
		y^{k+1} = x^{k+1} +\frac{k}{k+3}\left(x^{k+1}-x^{k}\right).
	\end{equation}
	
	\textbf{Step 3}: Set $k :=k+1$ and go to Step 1.
	\caption{Nesterov's accelerated projected gradient method for subproblem}
\end{algorithm}

A well-known convergence result of the above method is that, if $\phi$ is $\mu$-strongly convex and $\nabla \phi$ is $L$-Lipschitz continuous, then $\phi(x^{k}) - \phi(x^*) \leq \mathcal{O}\left((1 - \sqrt{\mu /L})^{k}\right)$. See \cite{Beck2017} for a detailed discussion on this topic. Here, we assume that the set $X$ is simple such thay the projection $\Pi_{X_{0}}$ can be efficiently computed.
\subsection{Nonconvex Quadratically Constrained Quadratic Programming}
In this subsection, we consider a class of randomly generated nonconvex quadratically constrained quadratic programming (QCQP) problems of the form
\begin{equation}\label{eq:qcqp-test}
\begin{aligned}
\min_{x \in X}\quad & f(x):=\frac12 x^\top Q_0 x + c_0^\top x,\\
\text{s.t.}\quad & g_i(x):=\frac12 x^\top Q_i x + c_i^\top x - r_i \le 0,\quad i=1,\ldots,p,
\end{aligned}
\end{equation}
where $X=[-R,R]^n$. To generate the quadratic data, we construct each matrix $Q_i\in\mathbb{S}^n$, $i=0,1,\ldots,p$, in the form
\[
Q_i=U_i^\top D_iU_i,
\]
where $U_i\in\mathbb{R}^{n\times n}$ is a random orthogonal matrix obtained from the $QR$ factorization of a Gaussian random matrix, and $D_i$ is a diagonal matrix of prescribed eigenvalues. The objective matrix $Q_0$ is chosen to be indefinite with
\[
\lambda_{\min}(Q_0)\ge -L_{0},
\]
so that the objective function $f$ is $L_{0}$-weakly convex. More precisely, a prescribed proportion of the eigenvalues of $Q_0$ are sampled from $[-L_{0},-0.1]$, while the remaining eigenvalues are sampled from $[0.5,3]$. For the constraint matrices $Q_i$, $i=1,\ldots,p$, most of them are generated to be positive semidefinite by sampling all eigenvalues from $[0.5,3]$, while the others are generated to be indefinite with
\[
\lambda_{\min}(Q_i)\ge -L_{i}
\]
by sampling a small proportion of the eigenvalues from $[-L_{i},-0.1]$ and the remaining ones from $[0.5,3]$. The linear terms are generated as
\[
c_0=\tau_0\xi_0,\qquad c_i=\tau_g\xi_i,\quad i=1,\ldots,p,
\]
where $\xi_0,\xi_1,\ldots,\xi_p\sim\mathcal{N}(0,I_n)$ are independent Gaussian random vectors, and $\tau_0,\tau_g>0$ are prescribed scaling parameters. In order to guarantee the existence of a strictly feasible point, we first generate a random vector $x^\ast\in X$, and then define
\[
r_i=\frac12 (x^\ast)^\top Q_ix^\ast+c_i^\top x^\ast+\delta_i,\qquad i=1,\ldots,p,
\]
where $\delta_i>0$ is sampled from a prescribed positive interval. With this construction,
\[
g_i(x^\ast)=-\delta_i<0,\qquad i=1,\ldots,p,
\]
and hence $x^\ast$ is a strictly feasible point for all generated instances. In the experiments, the initial point is chosen as $x^0=x^\ast$ and the initial multiplier is set to $\lambda^0=0$. In the test, we set $n=80$, $p=30$ and $R=2$.

\begin{figure}[htbp]
    \centering
    \subfloat[Averaged Lagrangian gradient measure]{
        \includegraphics[width=0.48\textwidth]{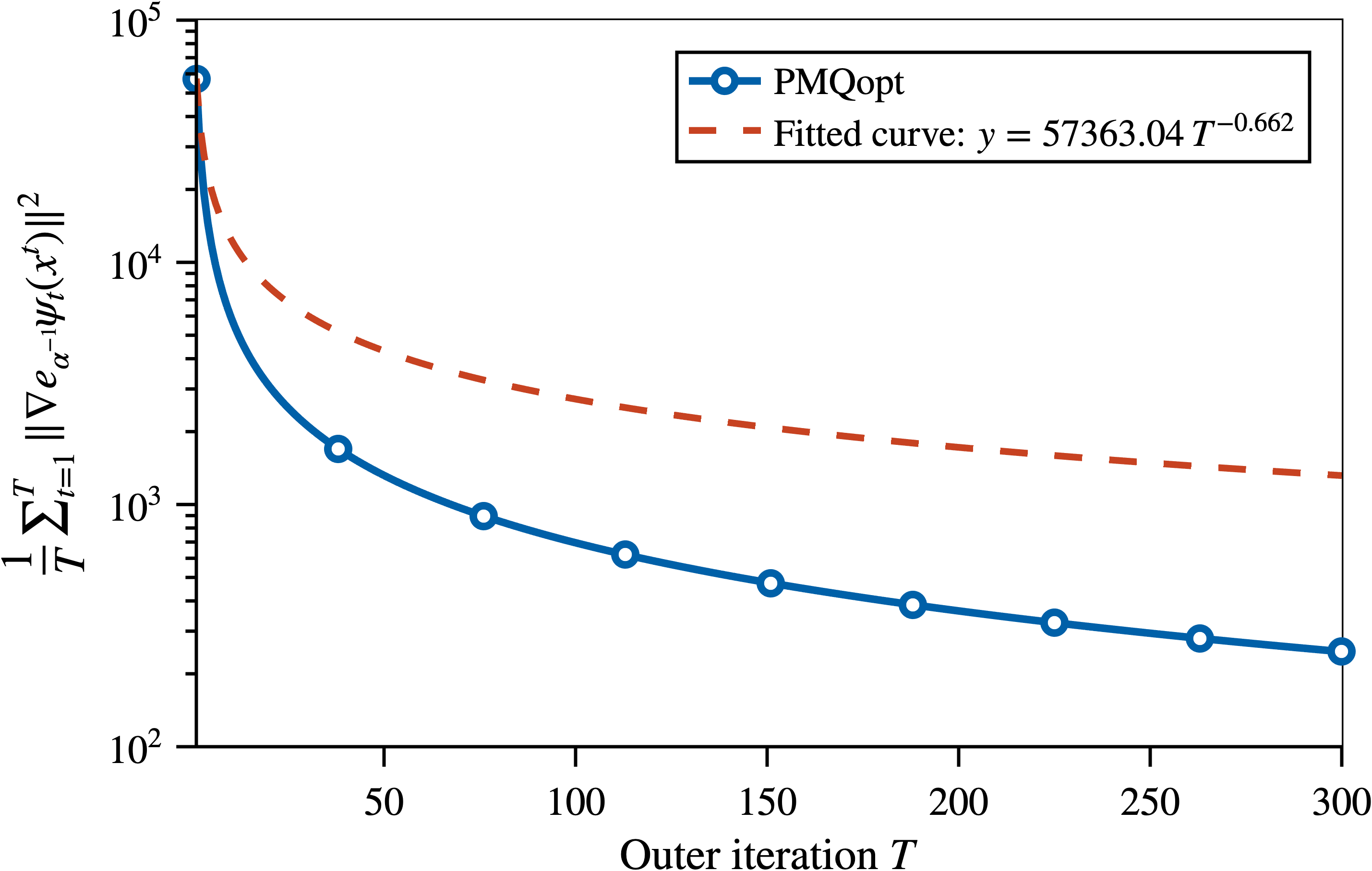}
        \label{fig:qcqp_grad}
    }
    \hfill
    \subfloat[Averaged constraint violation]{
        \includegraphics[width=0.48\textwidth]{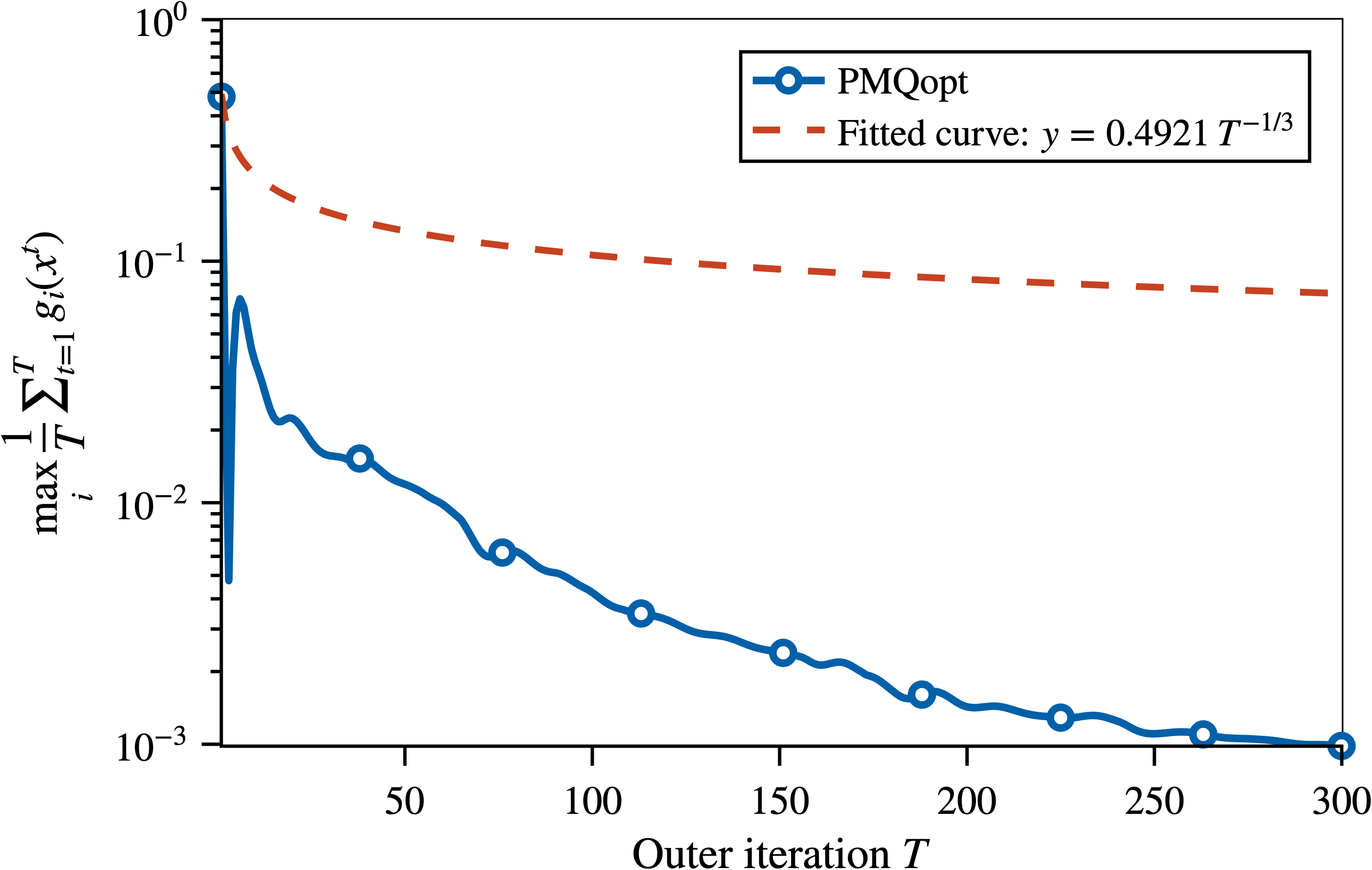}
        \label{fig:qcqp_feas}
    }

    \vspace{0.3cm}

    \subfloat[Averaged complementarity violation]{
        \includegraphics[width=0.48\textwidth]{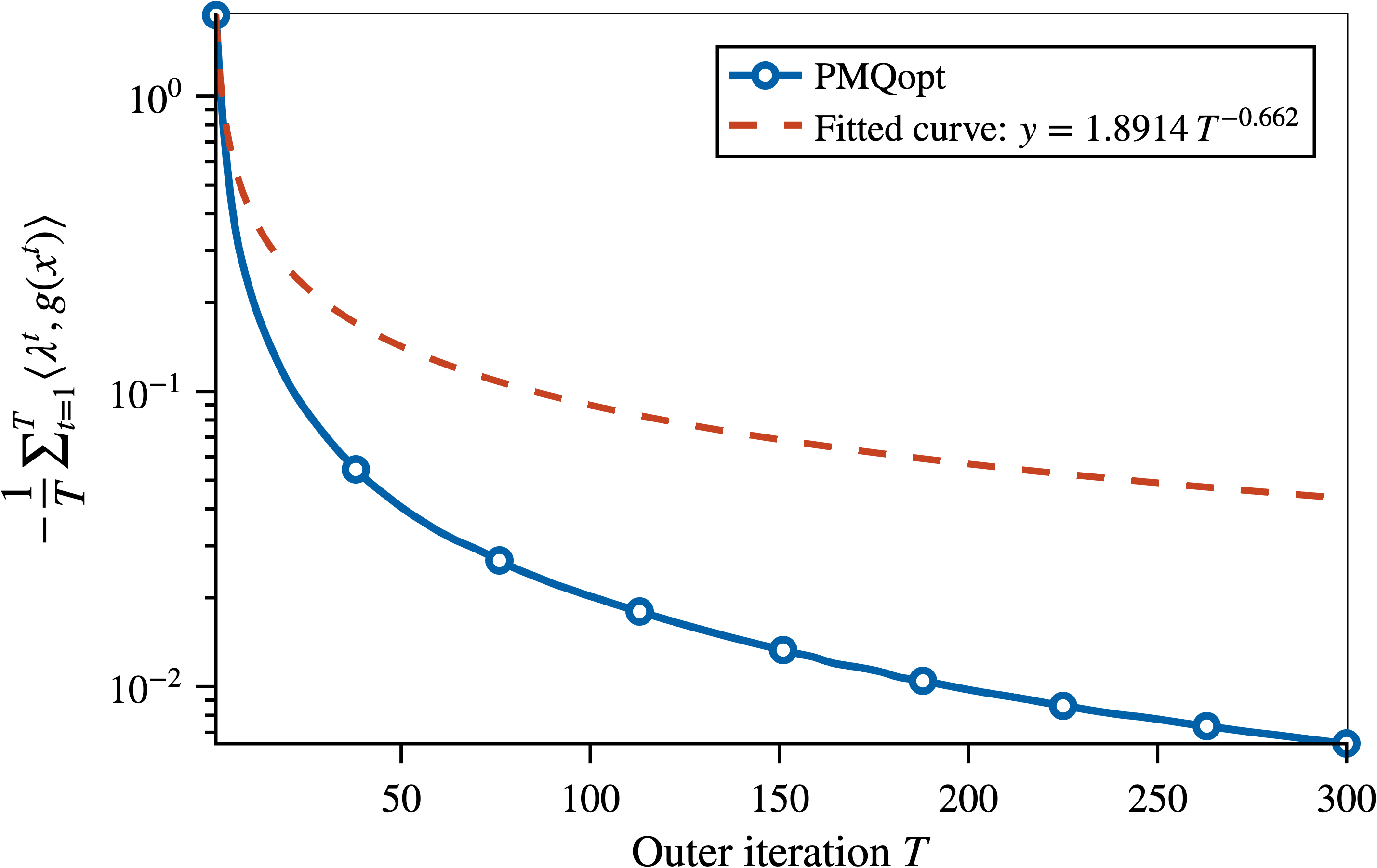}
        \label{fig:qcqp_comp}

}
    \hfill
    \subfloat[Data profile of CPU time]{
        \includegraphics[width=0.48\textwidth]{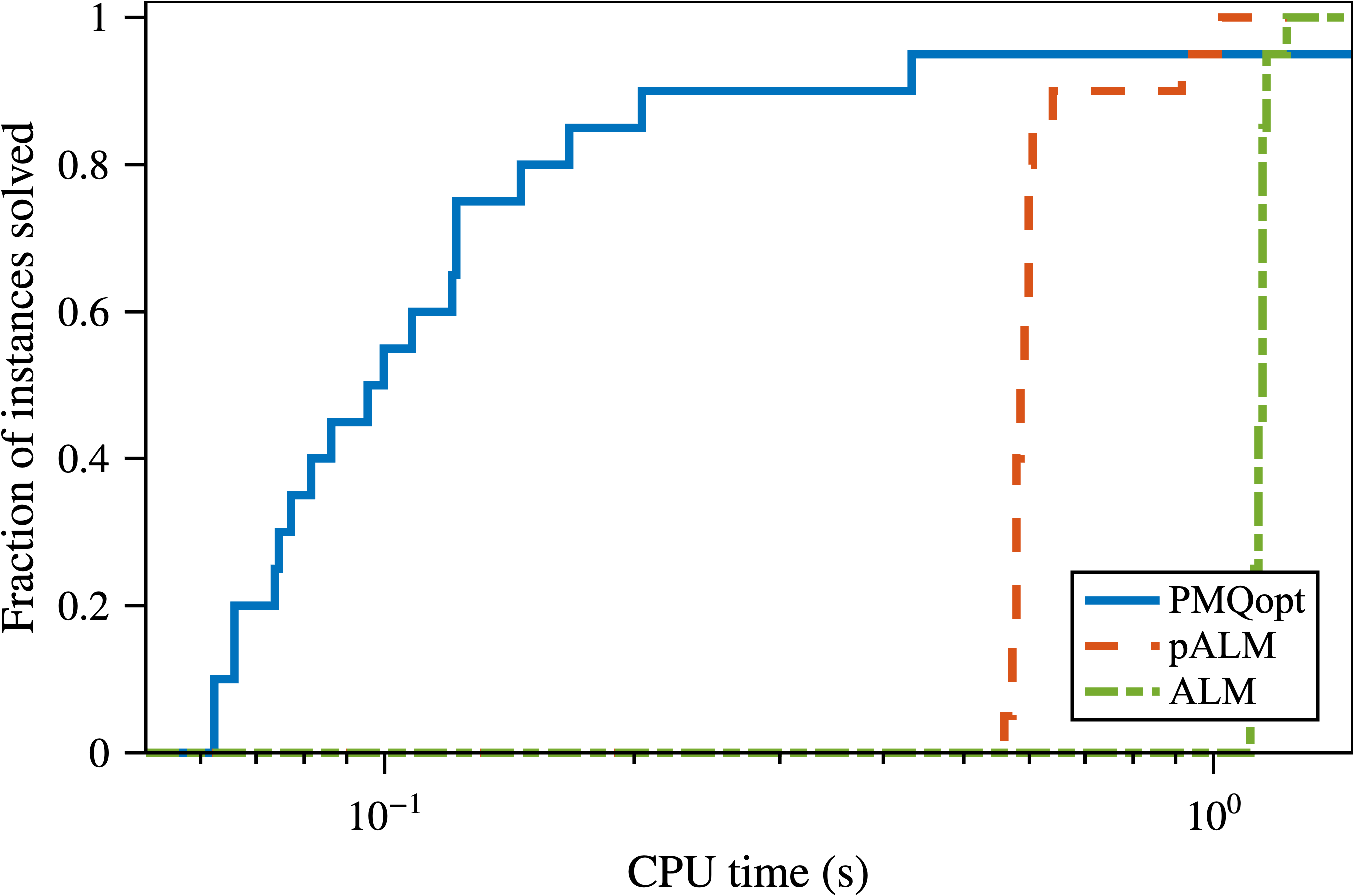}
        \label{fig:qcqp_profile}

    }
    \caption{Numerical performance of QPALM, pALM, and ALM on nonconvex QCQP instances. Panels (a)--(c) report the decay of three theoretical quantities for a representative instance, while panel (d) presents the data profile of CPU time over 20 randomly generated instances.}
    \label{fig:qcqp_main}
\end{figure}
Figures~\ref{fig:qcqp_grad}--\ref{fig:qcqp_comp} display the decay of the three theoretical quantities of QPALM on a representative nonconvex QCQP instance. The dashed lines are theory-consistent upper-envelope fits. All three quantities decrease steadily as the number of outer iterations increases. Moreover, the empirical decay rates of the first and third quantities are about $-0.662$, which falls into the theoretical interval $[-2/3,-1/3]$, while the second quantity is well captured by the rate $T^{-1/3}$. This is in agreement with Theorem~3.2.

Figure~\ref{fig:qcqp_profile} shows the data profile of CPU time over 20 randomly generated nonconvex QCQP instances. Here success means achieving at least $80\%$ of the best objective decrease attained by the three algorithms on a given instance. The QPALM curve rises earlier than those of pALM and ALM, indicating superior computational efficiency. In particular, QPALM is the fastest on 19 out of 20 instances, with median CPU time approximately $9.46\times 10^{-2}$ seconds, compared with $5.85\times 10^{-1}$ seconds for pALM and $1.13$ seconds for ALM.
\subsection{Nonconvex Neyman-Pearson classification}
In this subsection, we evaluate the proposed QPALM on a nonconvex Neyman--Pearson classification problem. The goal is to minimize the false-negative error subject to a constraint on the false-positive error. The model can be written as
\begin{equation}\label{eq:np-model}
\begin{aligned}
\min_{x \in \R^{d}} \quad & f(x) := \frac{1}{N_{0}} \sum_{i=1}^{N_{0}} \phi(x^{\top}a_{i}^{0}) \\
\mathrm{s.t.} \quad & g(x) := \frac{1}{N_{1}} \sum_{i=1}^{N_{1}} \phi(-x^{\top}a_{i}^{1}) - \tau \leq 0,
\end{aligned}
\end{equation}
where $\{a_i^0\}_{i=1}^{N_0}$ and $\{a_i^1\}_{i=1}^{N_1}$ denote the positive-class and negative-class samples, respectively, and $\tau$ specifies the admissible false-positive level. Here $\phi$ is the sigmoid function defined by
\[
\phi(u)=\frac{1}{1+\exp(u)}.
\]

The datasets used in our experiments are listed in Table~\ref{table 1}. For multi-class datasets, we manually convert them into binary classification tasks. For instance, for the MNIST dataset, we classify odd digits versus even digits.
\begin{table}[htbp]
\centering
\caption{Datasets used in Neyman--Pearson classification}
\label{table 1}
\begin{tabular}{cccccc}
\toprule[2pt]
Dataset & Data $N$ & Variable $n$ & Density & False-positive level $\tau$ & Reference\\
\midrule[1pt]
MNIST   & 60000 & 784  & 19.12\% & 0.2 & LeCun et al.~\cite{LeCun2010}\\
CINA    & 16033 & 132  & 29.56\% & 0.3 & Workbench Team~\cite{CINA2008}\\
gisette & 6000  & 5000 & 12.97\% & 0.2 & Guyon et al.~\cite{Guyon2004}\\
\bottomrule[2pt]
\end{tabular}
\end{table}

We test \eqref{eq:np-model} on the three benchmark datasets \texttt{CINA}, \texttt{MNIST}, and \texttt{gisette}. After separating the samples according to their labels, the corresponding triplets $(N_0,N_1,d)$ are $(3939,12094,132)$, $(23569,30508,784)$, and $(3000,3000,5000)$, respectively. The tolerance parameter is set to $\tau=0.3$ for \texttt{CINA} and $\tau=0.2$ for both \texttt{MNIST} and \texttt{gisette}. For QPALM, each inner subproblem is solved by the APG routine described in this paper, and the inner loop is terminated once
\[
\|x^{j+1}-x^j\|<10^{-6}.
\]
For theory validation, QPALM is run for $1000$ outer iterations. For algorithmic comparison, QPALM, pALM, and ALM are tested under the same outer stopping rule based on a KKT residual criterion, with a maximum budget of $120$ outer iterations. For each dataset, we present four figures. The first three are used to validate the theoretical quantities appearing in Theorem~3.2, while the fourth is used for algorithmic comparison. More precisely, for QPALM we consider the averaged stationarity measure, the averaged constraint violation, and the averaged complementarity measure, whose trajectories are shown in Figures~\ref{cina}--\ref{minist}. To compare the observed decay with Theorem~3.2, we overlay theorem-consistent upper-envelope fits. For the stationarity and complementarity measures, we use a model of the form
\[
y(T)=aT^\alpha,
\]
where $\alpha$ is restricted to the theoretical interval $[-2/3,-1/3]$, and $a$ is chosen as the smallest constant such that the fitted curve majorizes the entire empirical trajectory. For the constraint violation, we use the theorem-guided form
\[
y(T)=cT^{-1/3},
\]
where $c>0$ is chosen analogously. Hence, the dashed lines in the figures should be interpreted as theorem-consistent empirical upper bounds rather than unconstrained regressions. The numerical results are fully consistent with the complexity estimates in Theorem~3.2. For \texttt{CINA}, the fitted exponents for the stationarity and complementarity measures are approximately $-0.620$ and $-0.456$, respectively. For \texttt{MNIST}, the corresponding exponents are approximately $-0.601$ and $-0.662$, while for \texttt{gisette} they are approximately $-0.662$ and $-0.338$. All these values lie in the interval $[-2/3,-1/3]$, whereas the constraint-violation curves are well captured by the rate $T^{-1/3}$. This provides clear numerical support for the theoretical iteration-complexity bounds. The fourth figure reports the time-to-target comparison of QPALM, pALM, and ALM on the same dataset. Since each benchmark dataset corresponds to a single deterministic optimization problem, data profiles or performance profiles are not the most suitable choice. Instead, we use a time-to-target plot. Let
\[
\Delta_{\mathrm{alg}}(k):=
\frac{f(x^0)-\min_{0\le t\le k} f(x^t)}
     {f(x^0)-f_{\mathrm{ref}}},
\]
where $f_{\mathrm{ref}}$ denotes the best objective value attained by all three algorithms on the same dataset. For each target level $\theta\in[0.1,0.8]$, we record the smallest CPU time required by each algorithm to satisfy
\[
\Delta_{\mathrm{alg}}(k)\ge \theta.
\]

For each dataset, we present four figures. The first three are devoted to theory validation through the quantities appearing in Theorem~3.2, whereas the fourth is devoted to algorithmic comparison in terms of time-to-target performance. The resulting curves are shown in Figure~\ref{cina}, Figure~\ref{gisette} and Figure~\ref{minist}. A lower curve indicates better computational efficiency. On all three datasets, the curve of QPALM lies below those of pALM and ALM over a wide range of target levels, showing that QPALM reaches the same relative objective reduction with substantially less CPU time. This demonstrates the practical effectiveness of QPALM and highlights its computational advantage on the nonconvex Neyman--Pearson classification problems. \begin{figure}[htbp]
    \centering
    \subfloat[Averaged Lagrangian gradient measure]{
        \includegraphics[width=0.48\textwidth]{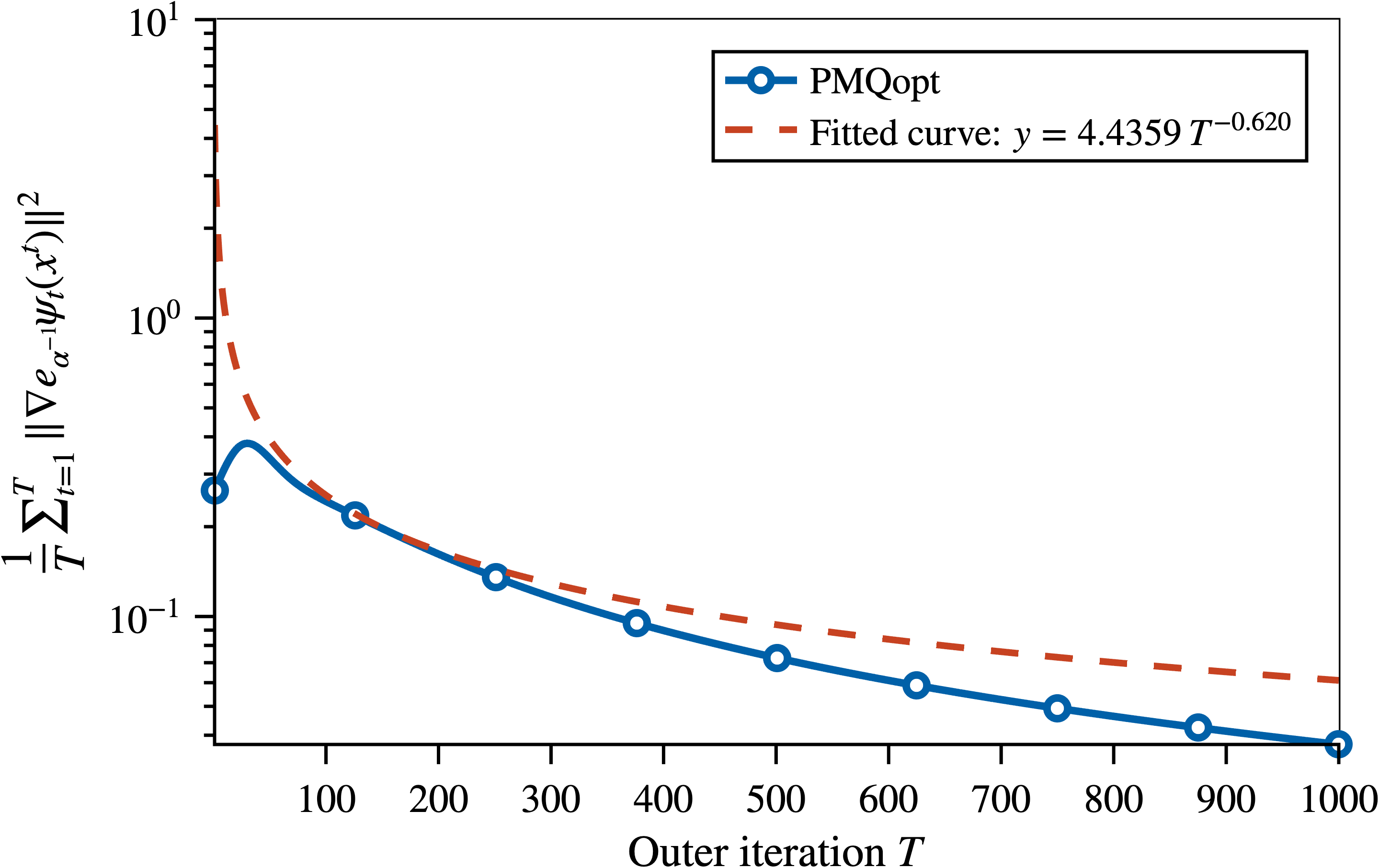}
        \label{fig:cina1}
    }
    \hfill
    \subfloat[Averaged constraint violation]{
        \includegraphics[width=0.48\textwidth]{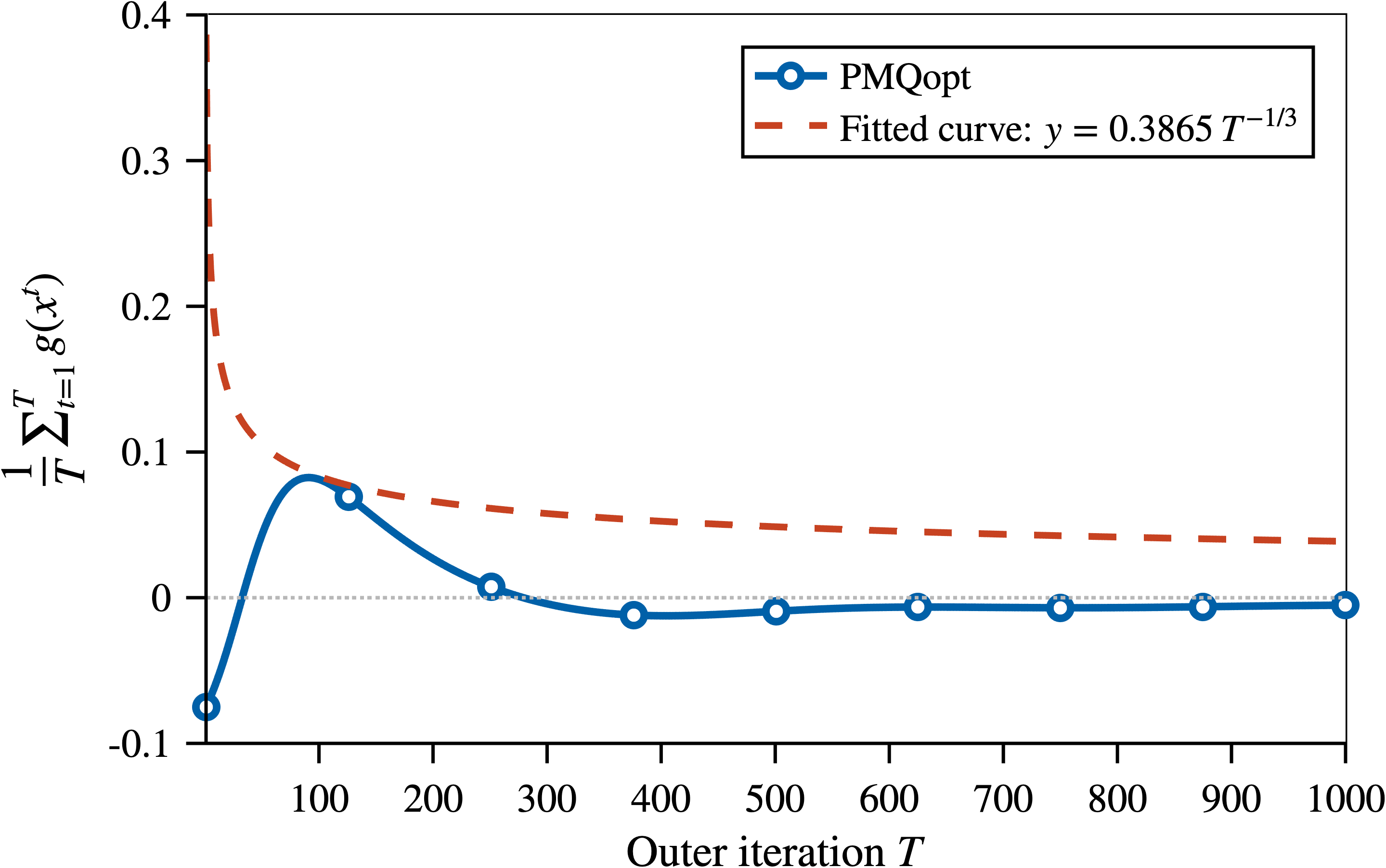}
        \label{fig:cina2}
    }

    \vspace{0.3cm}

    \subfloat[Averaged complementarity violation]{
        \includegraphics[width=0.48\textwidth]{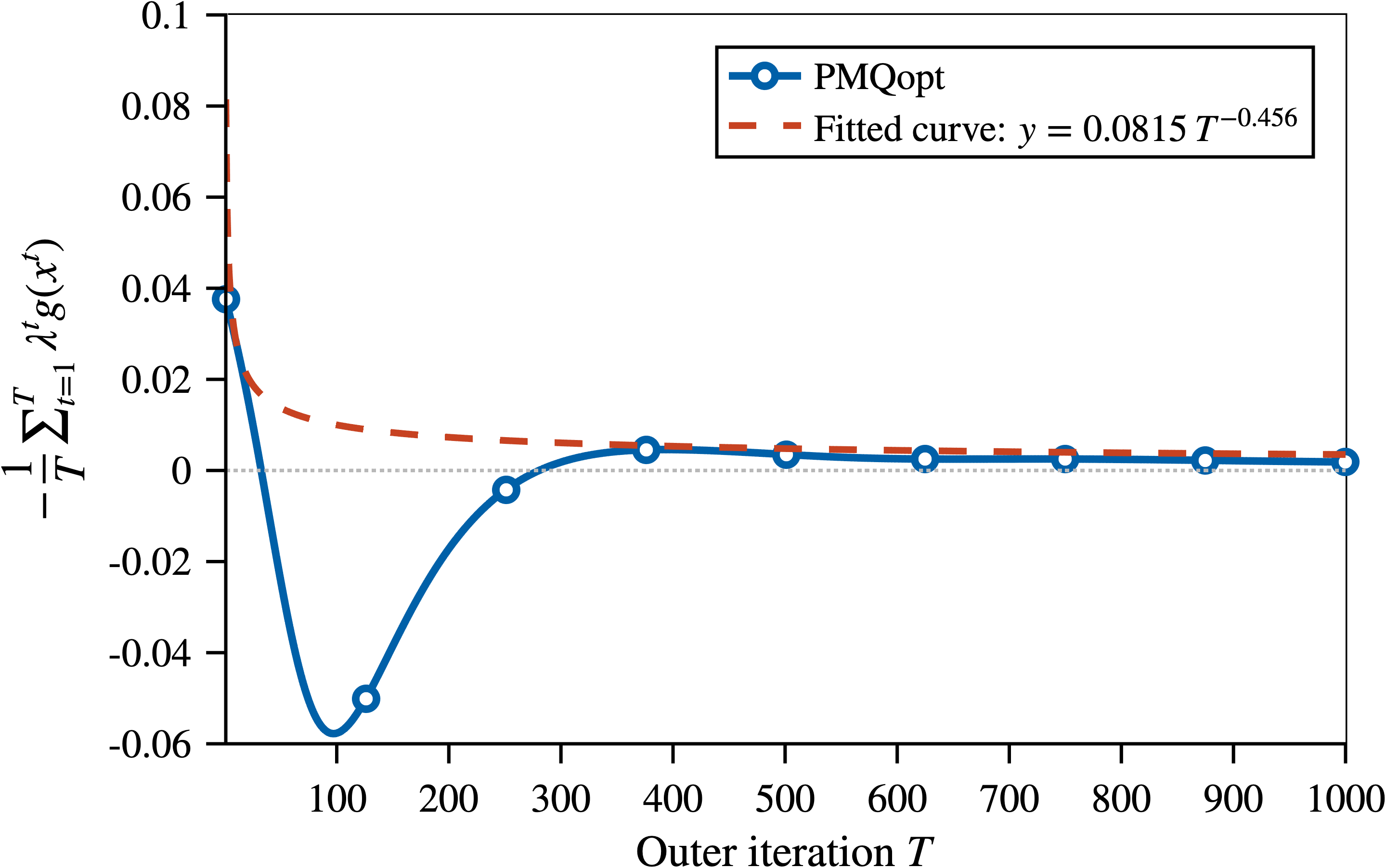}
        \label{fig:cina3}

    }
    \hfill
    \subfloat[Time-to-target
comparison]{
        \includegraphics[width=0.48\textwidth]{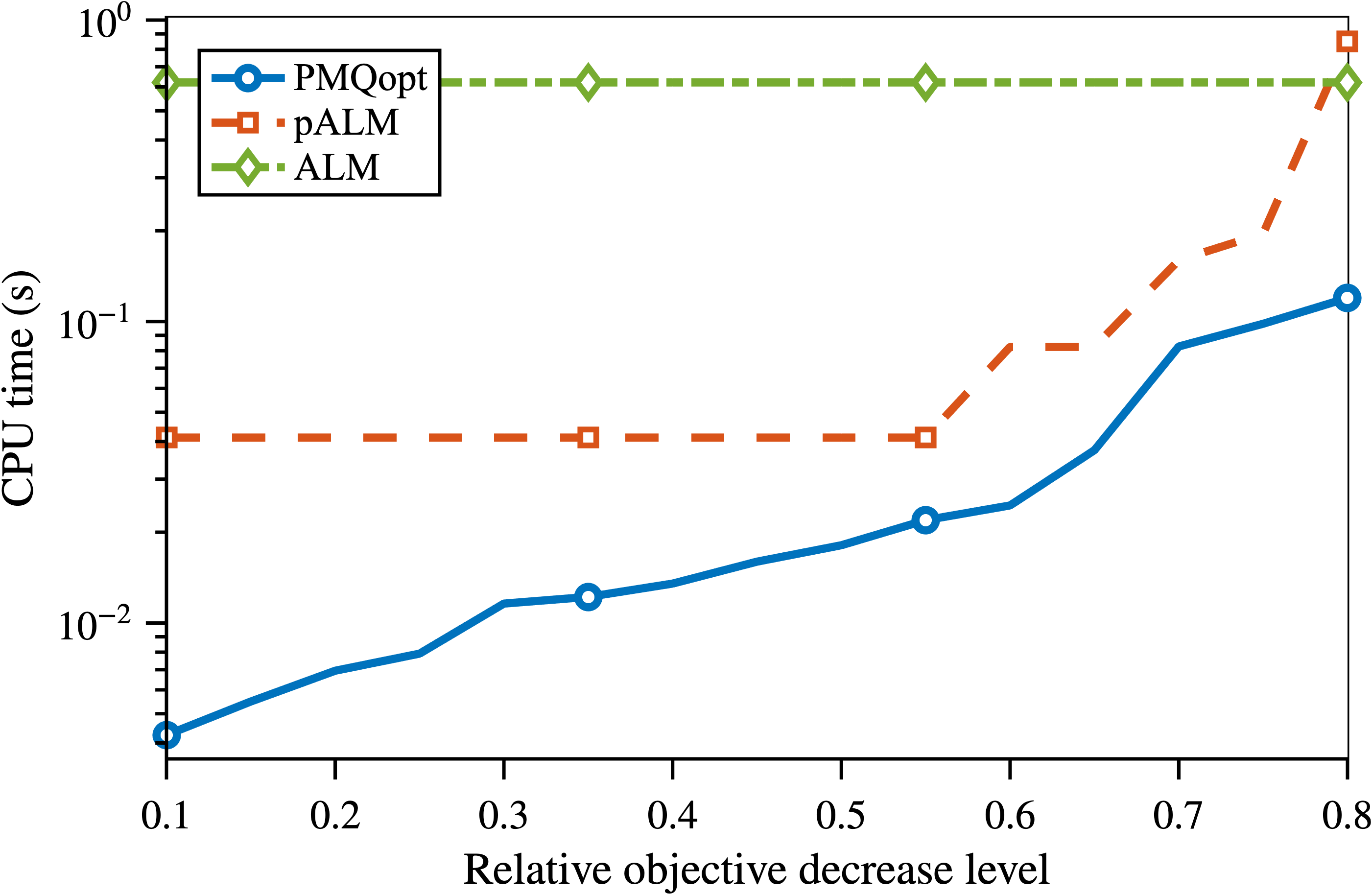}
        \label{fig:cina}

    }
    \caption{Theory validation and algorithmic comparison on the CINA dataset.}
    \label{cina}
\end{figure}

\begin{figure}[htbp]
    \centering
    \subfloat[Averaged Lagrangian gradient measure]{
        \includegraphics[width=0.48\textwidth]{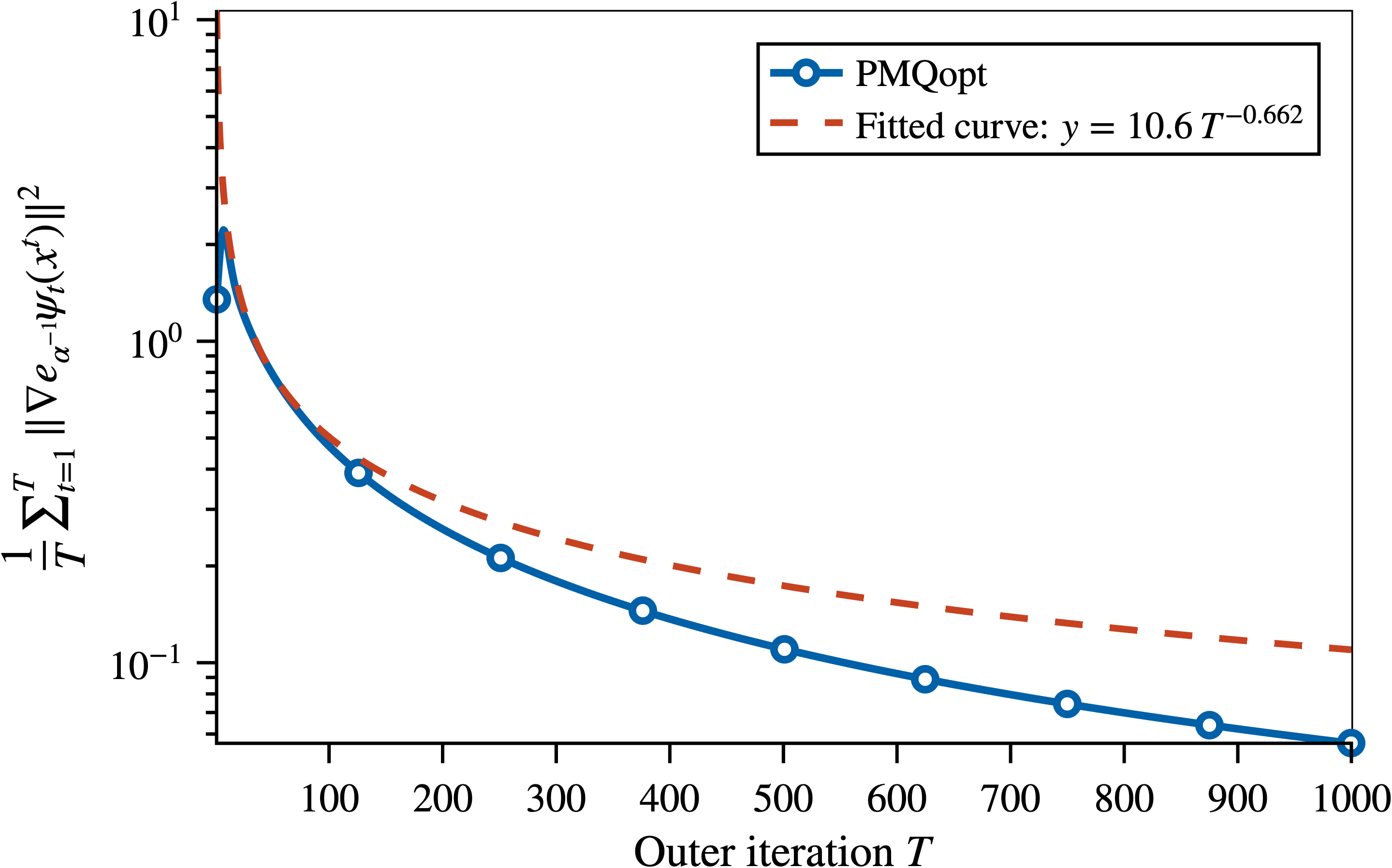}
        \label{fig:gisette1}
    }
    \hfill
    \subfloat[Averaged constraint violation]{
        \includegraphics[width=0.48\textwidth]{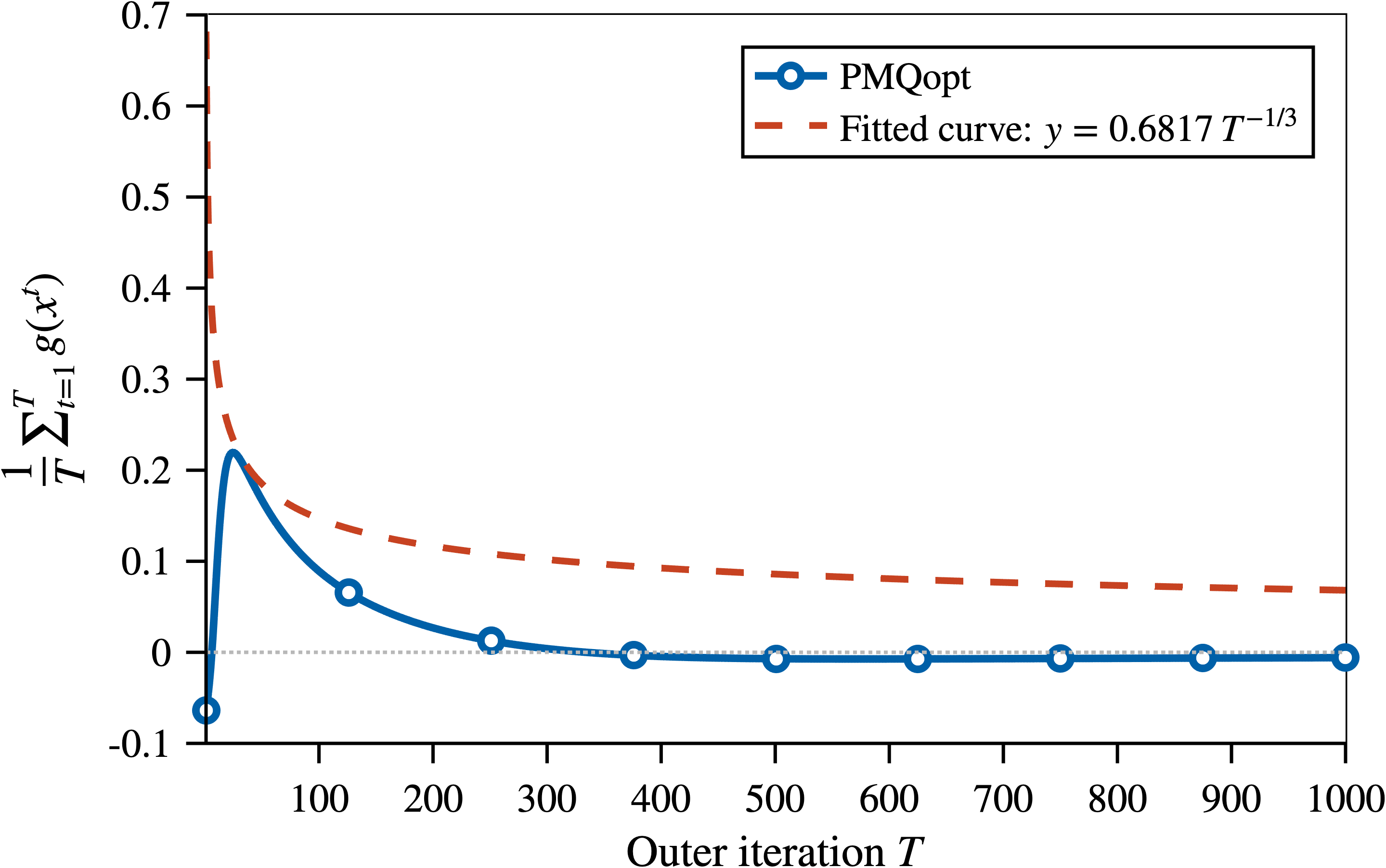}
        \label{fig:gisette2}
    }

 \vspace{0.3cm}

    \subfloat[Averaged complementarity violation]{
        \includegraphics[width=0.48\textwidth]{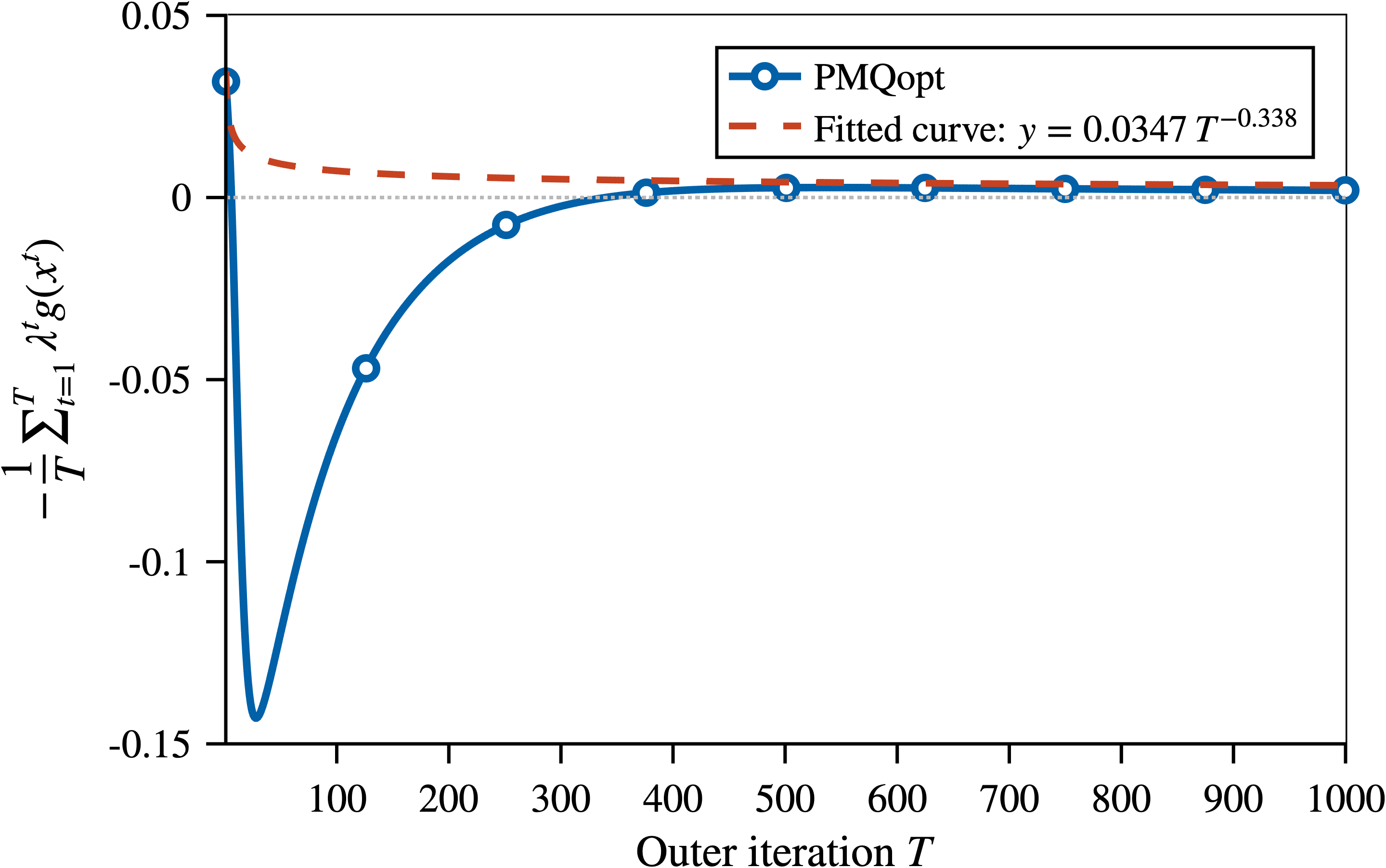}
        \label{fig:gisette3}

    }
    \hfill
    \subfloat[Time-to-target comparison]{
        \includegraphics[width=0.48\textwidth]{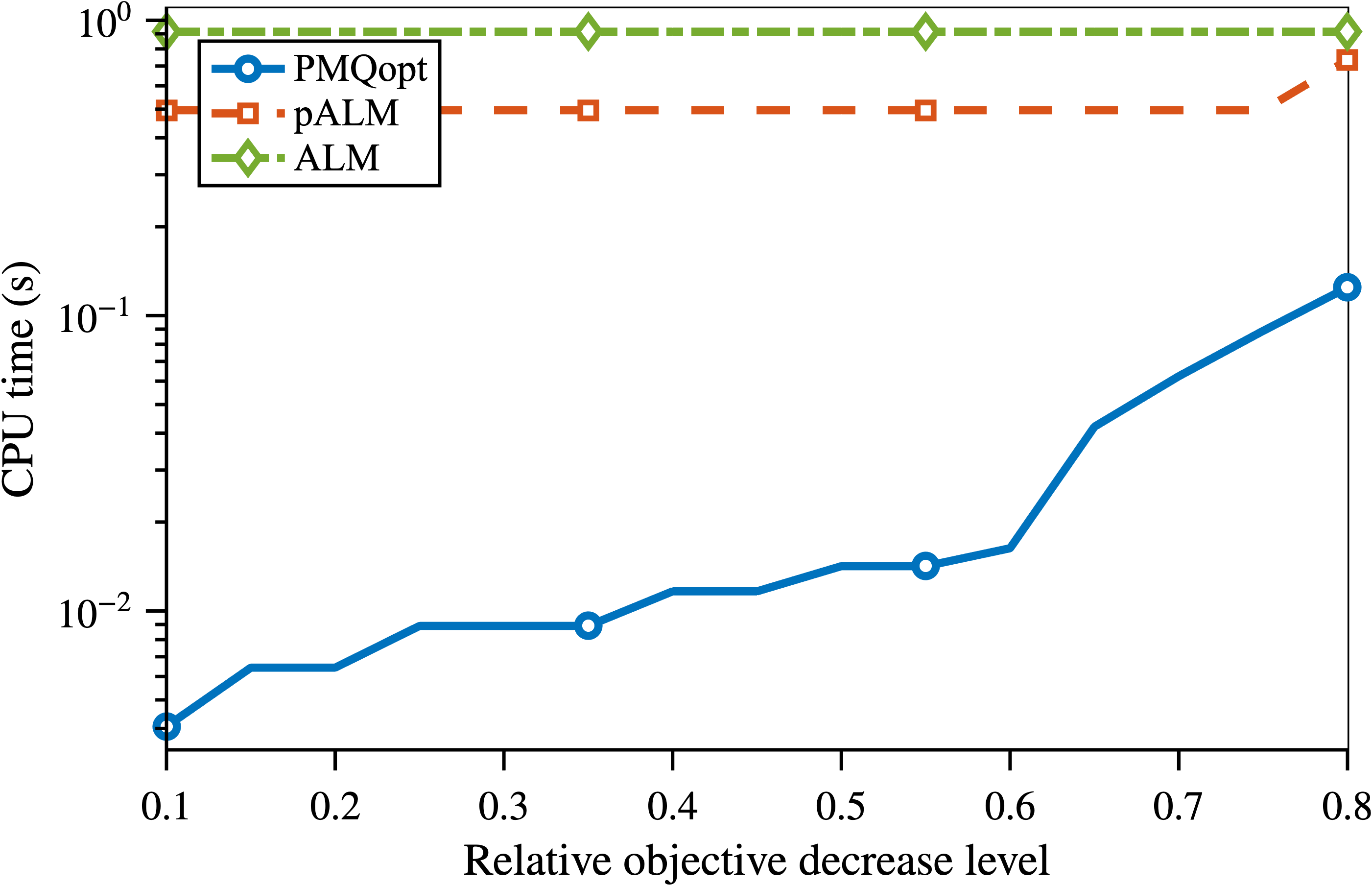}
        \label{fig:gisette}

    }
    \caption{Theory validation and algorithmic comparison on the gisette dataset.}
    \label{gisette}
\end{figure}

\begin{figure}[htbp]
    \centering
    \subfloat[Averaged Lagrangian gradient measure]{
        \includegraphics[width=0.48\textwidth]{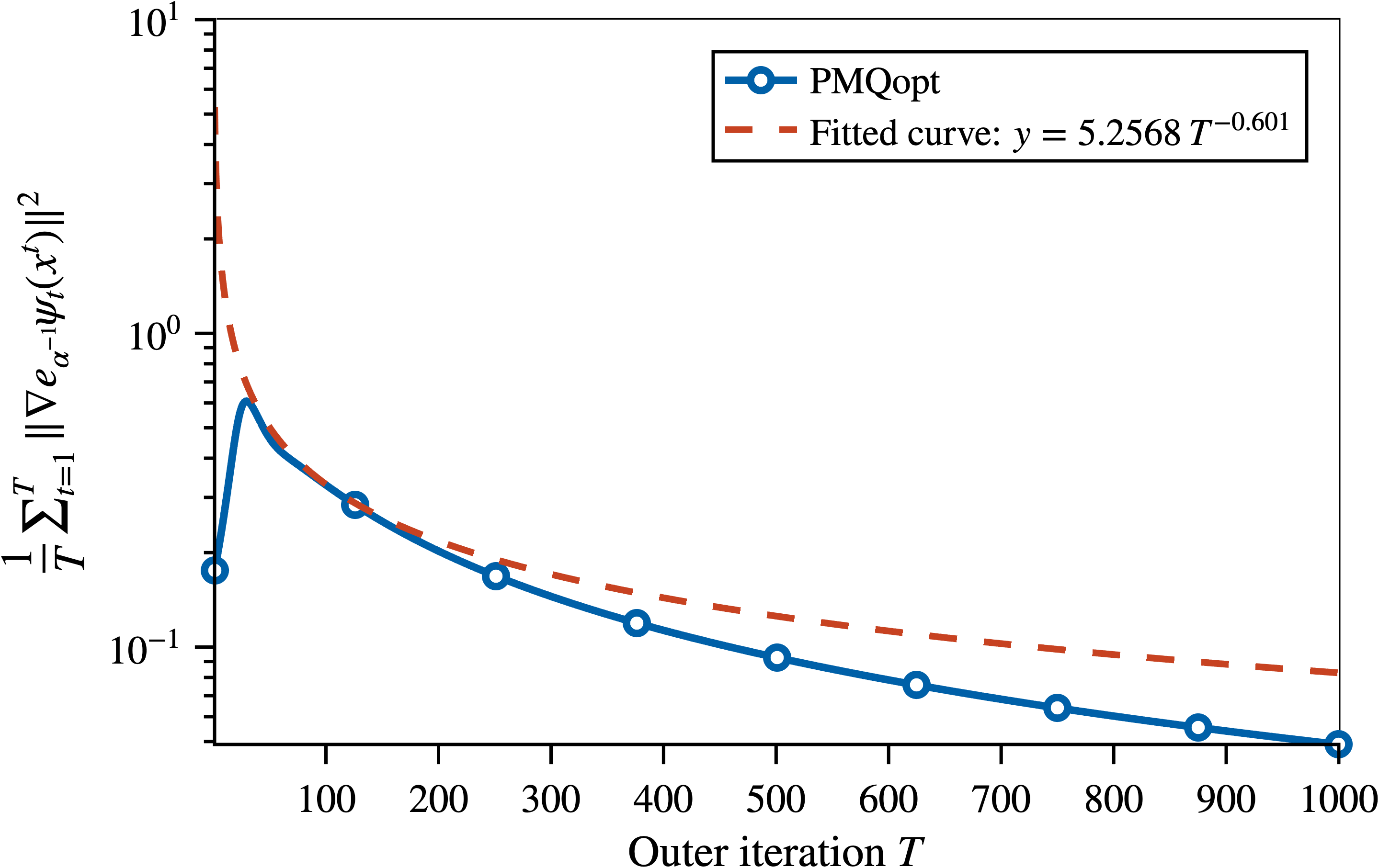}
        \label{fig:minist1}

   }
    \hfill
    \subfloat[Averaged constraint violation]{
        \includegraphics[width=0.48\textwidth]{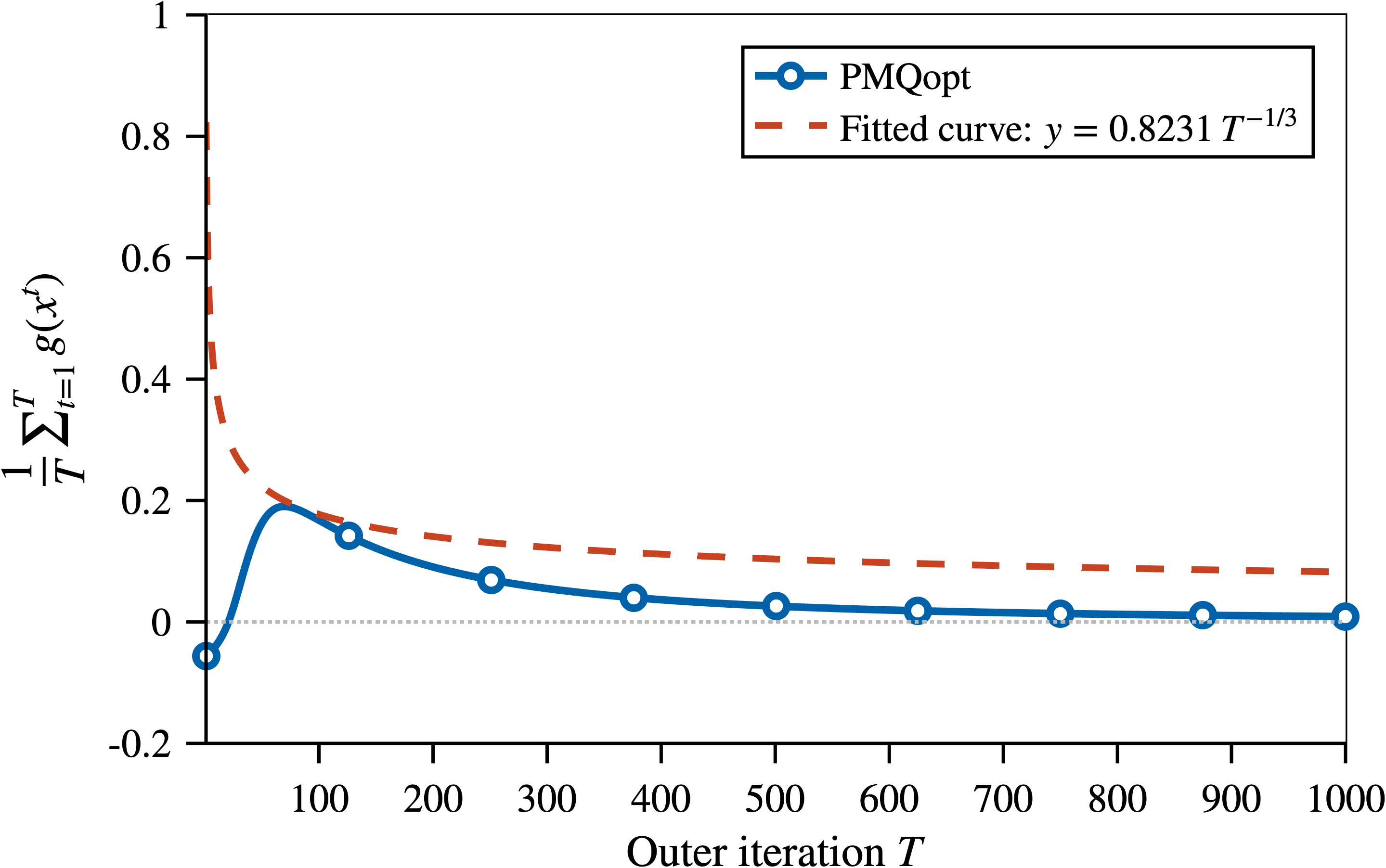}
        \label{fig:minist2}
    }

    \vspace{0.3cm}

    \subfloat[Averaged complementarity violation]{
        \includegraphics[width=0.48\textwidth]{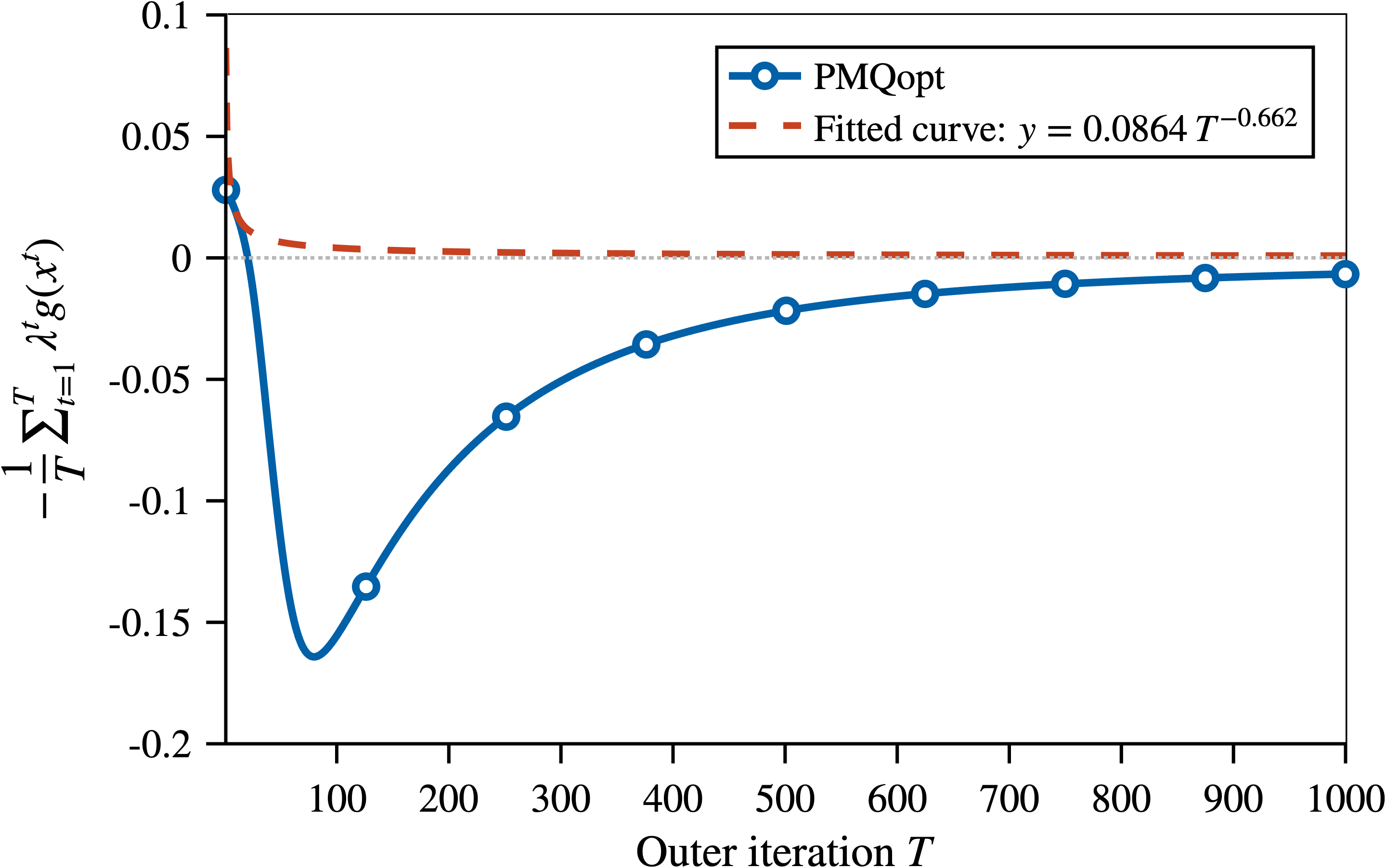}
        \label{fig:minist3}

    }
    \hfill
    \subfloat[Time-to-target comparison]{
        \includegraphics[width=0.48\textwidth]{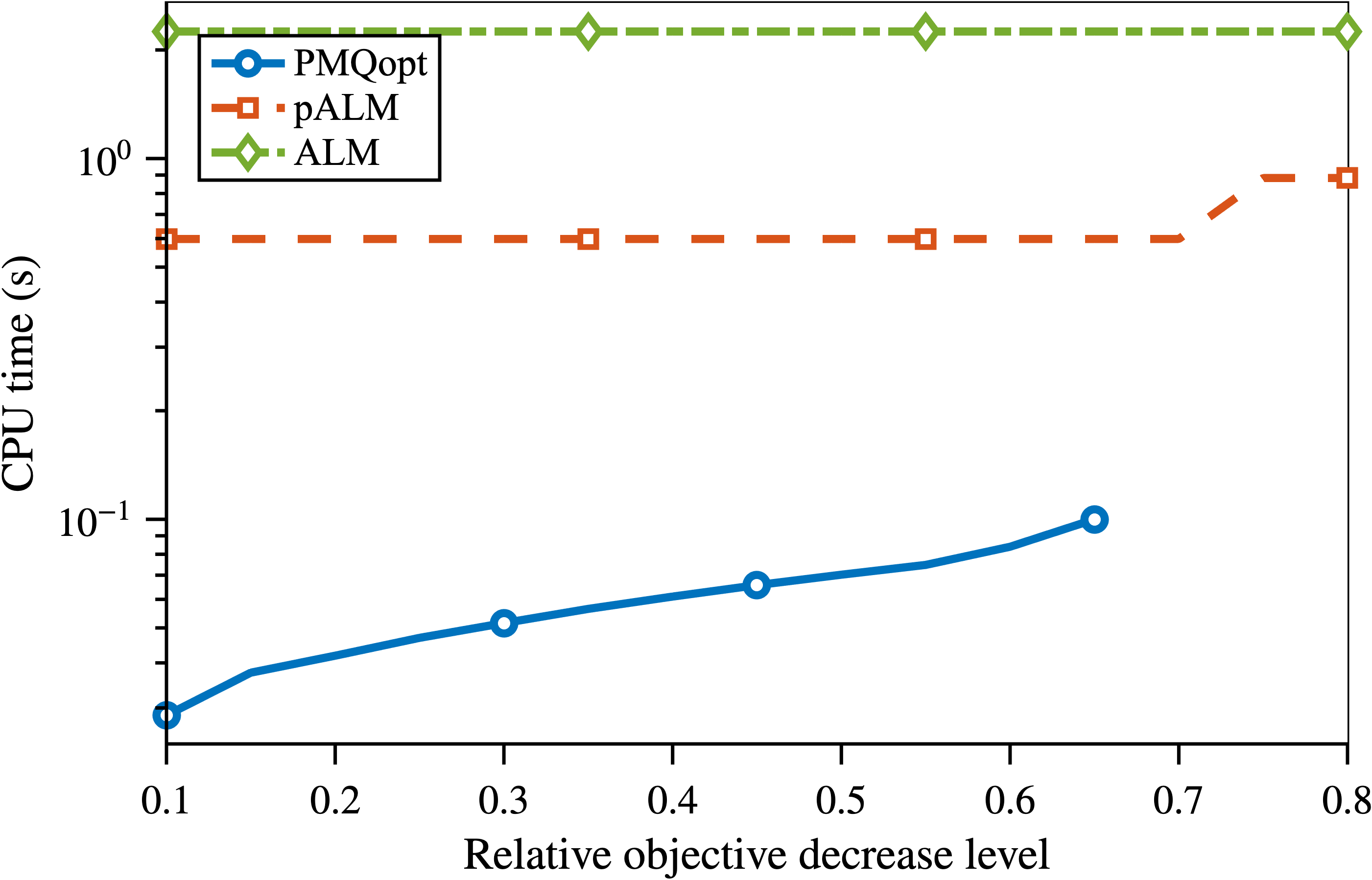}
        \label{fig:minist}

    }
    \caption{Theory validation and algorithmic
comparison on the MNIST dataset.}
    \label{minist}
\end{figure}

\section{Conclusion}\label{Sec5}
\setcounter{equation}{0}
In this paper, we present a proximal augmented Lagrangian  method for solving
 inequality constrained nonconvex  optimization problems whose functions are smooth weakly convex. The numerical method QPALM is designed based on solving a QCQP approximation of the original problem at each iteration. We  demonstrated the result about the iteration complexities of QPALM in terms of the squared norm of the gradient of the Moreau envelope of the Lagrangian, the average constraint violation, and the average complementarity violation.  Numerical results demonstrate
the superiority in comparison with pALM and ALM in the literature. For general non-convex optimization problems, under weaker conditions than those in this paper, how to analyze the iteration complexities of QPALM is still quite challenging, however this is an interesting topic left. 

\begin{thebibliography}{99}


\bibitem{Adeoye2025}
A. D. Adeoye, P. Latafat, and A. Bemporad,{\sl
A proximal augmented Lagrangian method for nonconvex optimization with equality and inequality constraints},arXiv preprint,2025, arXiv:2509.02894.
\bibitem{Beck2017}
Amir Beck, {\sl First-Order Methods in Optimization}, SIAM, Philadelphia, 2017.

 \bibitem{Bertsekas1982}
D.P.
Bertsekas, {\sl Constrained Optimization and Lagrange
Multiplier Methods},  Academic Press, New York, 1982.
\bibitem{Curtis2024}
Frank E. Curtis, Michael J. O'Neill and Daniel P. Robinson, {\sl
Worst-case complexity of an SQP method for nonlinear
equality constrained stochastic optimization}, Mathematical Programming, 205(2024),431-483.
\bibitem{Xuyy2025}
Hari Dahal, Wei Liu and Yangyang Xu,{\sl Damped proximal augmented Lagrangian method for
weakly-Convex problems with convex constraints}, Mathematical Programming Computation
https://doi.org/10.1007/s12532-025-00302-1.
\bibitem{GrapigliaYuan2021}
G. N. Grapiglia and Y. Yuan,{\sl
On the complexity of an augmented Lagrangian method for nonconvex optimization},
IMA Journal of Numerical Analysis,
41:2(2021), 1546--1568.
\bibitem{Guyon2004}
Isabelle Guyon, Steve Gunn, Aviel Ben-Hur, Gideon Dror,{\sl
Result analysis of the NIPS 2003 feature selection challenge},
Advances in Neural Information Processing Systems, Vol.
17, 2004, pp. 545--552 (MIT Press).



\bibitem{HajinezhadHong2019}
D. Hajinezhad and M. Hong,{\sl
Perturbed proximal primal-dual algorithm for nonconvex nonsmooth optimization},
Mathematical Programming,
176:1-2(2019), 207-245.
\bibitem{Hestenes1969}
M. R. Hestenes,{\sl
Multiplier and gradient methods},
Journal of Optimization Theory and Applications,
 4:5(1969), pp. 303--320.


\bibitem{HongHajinezhadZhao2017}
M. Hong, D. Hajinezhad, and M.-M.
Zhao,{\sl
Prox-PDA: the proximal primal-dual algorithm for fast distributed nonconvex optimization and learning over networks},
Proceedings of the 34th International Conference on Machine Learning,
Proceedings of Machine Learning Research, Vol.
70, 2017, pp. 1529--1538.

\bibitem{LeCun2010}
Yann LeCun, Corinna Cortes, Christopher J.C. Burges,
The MNIST Database of Handwritten Digits, 2010.
\url{http://yann.lecun.com/exdb/mnist/}.


\bibitem{P69}
 M.J.D.
Powell, {\sl A method for nonlinear constraints in
minimization problems}, In  R. Fletcher, Editor, {\sl
Optimization}, Academic Press, New York, 1969, 283--298.
\bibitem{Rock70}
 R. Tyrrell Rockafellar, {\sl Convex Analysis}, Princeton University Press,1970.
\bibitem{RW98}
R. T. Rockafellar and R. J. -B.
Wets, {\sl Variational  Analysis}, Springer-Verlag, New York, 1998.
\bibitem{CINA2008}
Workbench Team C,
A Marketing Dataset, 2008.
\url{http://www.causality.inf.ethz.ch/data/CINA.html}.
\bibitem{XieWright2021}
Y. Xie and S. J. Wright,{\sl
Complexity of Proximal Augmented Lagrangian for Nonconvex Optimization with Nonlinear Equality Constraints},
Journal of Scientific Computing,
86:38(2021), https://doi.org/10.1007/s10915-021-01409-y.
%
130, 2021, pp. 2170--2178.
\bibitem{Wang2015a}
X. Wang and Y.X. Yuan, An augmented Lagrangian trust region method for equality constrained optimization. Optim.
Methods Software,
30:3(2015), 559--582.
\bibitem{Wang2015b}
X. Wang and H. C. Zhang, An augmented Lagrangian affine scaling method for nonlinear programming. Optim.
Methods Software, 30:5(2015),934-964.




%
\bibitem{Zhang2023}
L. W. Zhang, Y. L. Zhang, X. T. Xiao and J. Wu,
Stochastic Approximation Proximal Method of Multipliers
for Convex Stochastic Programming,
Mathematics of Operations Research, 48:1(2023),177-193.
%
%
%
%
\end{thebibliography}
\end{document}